\documentclass[12pt]{amsart}
\usepackage{amsmath,amsfonts,amssymb, bm}

\usepackage{graphicx,eepic}
\newcommand{\BN}{\mathbb{N}}

\usepackage[normalem]{ulem}




\newtheorem{lemma}{Lemma}[section]

\newtheorem{prop}[lemma]{Proposition}
\newtheorem{thm}[lemma]{Theorem}
\newtheorem{cor}[lemma]{Corollary}
\theoremstyle{definition}
\newtheorem{Def}[lemma]{Definition}
\newtheorem{example}[lemma]{Example}

\theoremstyle{remark}
\newtheorem{rmk}[lemma]{Remark}


\numberwithin{equation}{section} \numberwithin{table}{section}

\newcommand{\beq}{\begin{equation}}
\newcommand{\eeq}{\end{equation}}

\newcommand{\ov}{\overline}
\newcommand{\wt}{\widetilde}

\newcommand{\de}{\delta}
\newcommand{\De}{\Delta}

\newcommand{\ga}{\gamma}
\newcommand{\e}{\varepsilon}

\newcommand{\om}{\omega}

\newcommand{\Si}{\Sigma}

\begin{document}

\title[The doubling map with asymmetrical holes]
{The doubling map with asymmetrical holes}
\author{Paul Glendinning and Nikita Sidorov}
\address{
School of Mathematics, The University of Manchester,
Oxford Road, Manchester M13 9PL, United Kingdom. E-mail:
p.a.glendinning@manchester.ac.uk}
\address{
School of Mathematics, The University of Manchester,
Oxford Road, Manchester M13 9PL, United Kingdom. E-mail:
sidorov@manchester.ac.uk}

\date{\today}
\subjclass[2010]{Primary 28D05; Secondary 37B10.} \keywords{Open dynamical system, doubling map, Hausdorff dimension, routes to chaos.}

\begin{abstract} Let $0<a<b<1$ and let $T$ be the doubling map. Set $\mathcal J(a,b):=\{x\in[0,1] : T^nx\notin (a,b),  n\ge0\}$. In this paper we completely characterize the holes $(a,b)$ for which any of the following scenarios holds:
\begin{enumerate}
\item $\mathcal J(a,b)$ contains a point $x\in(0,1)$;
\item $\mathcal J(a,b)\cap [\de,1-\de]$ is infinite for any fixed $\de>0$;
\item $\mathcal J(a,b)$ is uncountable of zero Hausdorff dimension;
\item $\mathcal J(a,b)$ is of positive Hausdorff dimension.
\end{enumerate}
In particular, we show that (iv) is always the case if
\[
b-a<\frac14\prod_{n=1}^\infty \bigl(1-2^{-2^n}\bigr)\approx 0.175092
\]
and that this bound is sharp. As a corollary, we give a full description of first and second order critical holes introduced in \cite{SSC} for the doubling map.

Furthermore, we show that our model yields a continuum of ``routes to chaos'' via arbitrary sequences of products of natural numbers, thus generalizing the standard route to chaos via period doubling.
\end{abstract}

\maketitle

\section{Introduction}

The study of dynamical systems with holes, i.e. the characterization of points which do not fall into certain
predetermined sets under iteration by a map, poses interesting questions both about arithmetic properties
of points and their dynamical interpretation \cite{NS}. There is a growing body of work describing the effect of holes
on hyperbolic systems \cite{BKT, CM1,CM2,Troub} and expanding maps \cite{NS,SSC}. In \cite{GS} (see also \cite{ACS}) the problem
of symmetric holes about $x=\frac12$ for the doubling map $T:[0,1]\to[0,1]$ given by the formula
\[
Tx=\begin{cases} 2x, & x\in[0,1/2],\\
2x-1, & x\in(1/2, 1].
\end{cases}
\]
was studied, and in particular it was shown that if the hole is of the form $(\alpha, 1-\alpha)$ then below
a certain threshold ($\alpha =\frac{1}{3}$) the only points that do not fall into the hole are the fixed points $x=0$ and $x=1$, whilst if $\alpha$ is large enough, so the hole is small enough, then the set of points that does not fall in the hole is uncountable. This latter threshold is specified by a number related to the Thue-Morse
sequence that is familiar from the standard kneading theory of unimodal maps or Lorenz maps \cite{CE,Gbook,MT}.

The aim of this paper is to generalize these results to asymmetric holes, revealing the complex structure of the
two thresholds mentioned above, which become curves in parameter space. The methods used to describe these thresholds rely on analogous results from the theory of piecewise increasing maps of the interval, and in many ways the results presented here can be seen as 'translations' of similar results in \cite{GLT,GPTT,GH,GS,HS}.

So, let $0<a<b<1$ and put
\[
\mathcal J(a,b)=\{x\in[0,1] : T^nx\notin (a,b)\ \text{for all}\ n\ge0\}.
\]
Our goal is to study the size of the set $\mathcal J(a,b)$ for all pairs $(a,b)$. To do so, we introduce the symbolic set $\Sigma:=\{0,1\}^{\mathbb N}$ and the one-sided shift $\sigma:\Sigma\to\Sigma$ given by $\sigma(w_1,w_2,w_3,\dots)=(w_2,w_3,\dots)$. As is well known, $T\pi=\pi\sigma$, where
\[
\pi(w_1,w_2,\dots)=\sum_{n=1}^\infty w_n2^{-n},
\]
and $\pi:\Sigma\to[0,1]$ is one-to-one except for a set of sequences ending with $0^\infty$ or $1^\infty$.

We begin our investigation with a simple lemma which will help us reduce the range for $a$ and $b$.

\begin{lemma}\label{lem:simple}
\begin{enumerate}
\item If $a<1/4, b>1/2$ or $a<1/2, b>3/4$, then $\mathcal J(a,b)=\{0,1\}$.
\item If $b<1/2$ or $a>1/2$, then $\dim_H \mathcal J(a,b)>0$.
\end{enumerate}
\end{lemma}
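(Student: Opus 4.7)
For part~(i), the plan is to split the assertion $\mathcal J(a,b)=\{0,1\}$ into the two disjointness statements $(0,a]\cap\mathcal J(a,b)=\emptyset$ and $[b,1)\cap\mathcal J(a,b)=\emptyset$, analysed by direct orbit tracking on the two components of the survivor set $[0,a]\cup[b,1]$. A useful preliminary observation is that $1/2\in(a,b)$ in both listed sub-cases (since $a<1/2<b$), so $T^{-1}(0)=\{0\}$ and $T^{-1}(1)\cap\mathcal J(a,b)=\{1\}$; no survivor in $(0,1)$ can have a forward iterate equal to $0$ or $1$. In sub-case~A ($a<1/4,\,b>1/2$) I would prove the first disjointness directly: $x\in(0,a]\cap\mathcal J(a,b)$ gives $Tx=2x\le 2a<1/2<b$, which is disjoint from $[b,1)$, forcing $Tx\in(0,a]$, and inductively $T^n x=2^n x\le a$ for all $n$ yields $x=0$. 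The second disjointness then follows because for $x\in[b,1)\cap\mathcal J(a,b)$, the image $Tx=2x-1\in[2b-1,1)$ with $2b-1>0$ must lie in the survivor set, but $(0,a]\cap\mathcal J(a,b)$ is now known empty, so $Tx\in[b,1)$ and inductively $T^n x=1-2^n(1-x)\ge b$ forces $x=1$. In sub-case~B ($a<1/2,\,b>3/4$) the order is reversed: first use $2b-1>1/2>a$ to get $Tx\in[b,1)$ immediately for $x\in[b,1)\cap\mathcal J(a,b)$, giving $x=1$; then $(0,a]\cap\mathcal J(a,b)=\emptyset$ follows because any escape of $Tx=2x$ from $(0,a]$ would have to land in $[b,1)\cap\mathcal J(a,b)$, which is empty.

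For part~(ii) the two hypotheses $b<1/2$ and $a>1/2$ are interchanged by the conjugacy $x\mapsto 1-x$ (which intertwines $T$ with itself away from the dyadic rationals and swaps $(a,b)$ with $(1-b,1-a)$), so I assume $b<1/2$. Choose $m\in\BN$ with $2^{-m-1}<\tfrac12-b$ and introduce the shift-invariant SFT
\[
X_m=\{w\in\Si\,:\,w_i=0\Rightarrow w_{i+1}=\cdots=w_{i+m}=1\text{ for all }i\ge 1\}.
\]
For $w\in X_m$ and $k\ge 0$: if $w_{k+1}=1$ then $\pi(\si^k w)\ge 1/2>b$, and if $w_{k+1}=0$ then the forced block of $m$ ones gives
\[
\pi(\si^k w)\ge\sum_{j=2}^{m+1}2^{-j}=\tfrac12-2^{-m-1}>b.
\]
In both cases $\pi(\si^k w)\notin(a,b)$, so $\pi(X_m)\subset\mathcal J(a,b)$. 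The count of admissible words in $X_m$ obeys the recursion $L_n=L_{n-1}+L_{n-m-1}$, whose characteristic polynomial $x^{m+1}-x^m-1$ has largest real root $\lambda_m\in(1,2)$; thus $h_{\mathrm{top}}(X_m)=\log\lambda_m>0$, and the standard identity $\dim_H\pi(X)=h_{\mathrm{top}}(X)/\log 2$ for subshifts of the doubling map delivers $\dim_H\mathcal J(a,b)\ge\log\lambda_m/\log 2>0$.

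The step requiring the most care is the interleaved dependency in part~(i): one of the two component-disjointness statements must be established first and then invoked to rule out one-step escapes from the other component, and which to do first is dictated by the sub-case. The key numerical inputs are $2a<1/2<b$ (case~A) and $2b-1>1/2>a$ (case~B), both of which make the relevant forward image avoid one of the two components of the survivor set outright.
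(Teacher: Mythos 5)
Your proof is correct in both parts. For part (ii) you follow essentially the same route as the paper: reduce to $b<1/2$ by the symmetry $x\mapsto 1-x$, build the subshift of finite type in which every $0$ is followed by at least $m$ ones, verify that every shift of every sequence projects to a point $\ge \frac12-2^{-m-1}>b$, and convert positive topological entropy into positive Hausdorff dimension via $\dim_H\pi(X)=h_{\mathrm{top}}(X)/\log 2$; your estimate is marginally cruder than the paper's bound $\bigl(\frac12-2^{-n-1}\bigr)/\bigl(1-2^{-n-1}\bigr)$ but equally sufficient, and your explicit entropy computation via the recursion $L_n=L_{n-1}+L_{n-m-1}$ is a harmless elaboration of what the paper simply asserts. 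For part (i) your route is genuinely different. The paper observes that under either hypothesis the hole contains the image of a full length-two cylinder ($[w_1=0,w_2=1]$, i.e.\ the interval $[1/4,1/2]$, when $a<1/4$ and $b>1/2$; $[w_1=1,w_2=0]$ in the other case), so any survivor has a dyadic expansion with no factor $01$ (resp.\ $10$), which forces $x\in\{0,1\}$ once the ambiguous expansions ending in $10^\infty$ are replaced by ones ending in $01^\infty$ and seen to land in the hole. You instead track orbits directly on the two components $(0,a]$ and $[b,1)$ of the candidate survivor set: the inequalities $2a<1/2<b$ (case A) and $2b-1>1/2>a$ (case B) show that one component, if it met $\mathcal J(a,b)$, would be forward invariant under the affine branch of $T$, which expansion rules out, and the other component then empties because its only possible escape is into the first. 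Both arguments are complete and elementary; the paper's is shorter and makes the role of the cylinder transparent, while yours avoids symbolic coding for this part entirely and correctly identifies the one genuinely order-sensitive point, namely which disjointness claim must be proved first in each sub-case.
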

\begin{proof}(i) Suppose $a<1/4$ and $x\in\mathcal J(a,b)$. Then $\pi^{-1}(a,b)$ contains the cylinder $[w_1=0, w_2=1]$. Hence the dyadic expansion of $x$ cannot contain $01$, which implies $x=0$ or $1$, because if the dyadic expansion of $x$ ends with $10^\infty$, it can be replaced with $01^\infty$, which lies in $(a,b)$. The case $b>3/4$ is analogous, with the cylinder $[w_1=1, w_2=0]$ instead, so we omit the proof.

\smallskip\noindent
(ii) This is essentially proved in \cite[Proposition~1.5]{SSC} but we will repeat the argument for the reader's convenience.

Assume that $[a,b]\subset (0,1/2)$ (the case $[a,b]\subset(1/2,1)$ is completely analogous). Fix $n\ge2$ and consider the following subshift of finite type:
\[
\Sigma_n=\{w\in\Sigma : w_k=0\implies w_{k+j}=1, \ j=1,\dots, n\}.
\]
When each 0 in the dyadic expansion of some $x\in(0,1)$ is succeeded by at least $n$ consecutive 1s, this means that $x\ge \frac{2^{-2}+2^{-3}+\dots+2^{-n}}{1-2^{-n-1}}$. Thus,
\[
\pi(\Sigma_n)\subset \left[\frac{\frac12-2^{-n-1}}{1-2^{-n-1}}, 1\right),
\]
which implies that $\pi(\Sigma_n)\subset \mathcal J(a,b)$ for all $n$ large enough. The topological entropy $h_{top}$ of the subshift $\sigma|_{\Si_n}$ is positive, whence $\dim_H \mathcal J(a,b)\ge \dim_H\pi(\Sigma_n)=h_{top}(\sigma|_{\Si_n})/\log2>0$.
\end{proof}

Thus, we may confine ourselves to the case $(a,b)\in (1/4,1/2)\times(1/2,3/4)$. Put
\[
D_0=\{(a,b)\in (1/4,1/2)\times(1/2,3/4) : \mathcal J(a,b)\neq\{0,1\}\}
\]
and
\[
D_1=\{(a,b)\in (1/4,1/2)\times(1/2,3/4) : \mathcal J(a,b)\ \text{is uncountable}\}.
\]

The main result of the paper is the following

\begin{thm}\label{thm:main}
We have
\[
\{(a,b)\in(0,1/2)\times (1/2,1) : \dim_H\mathcal J(a,b)>0\}= \{(a,b) : b<\chi(a)\},
\]
where $\chi$ is given by Theorem~\ref{thm:chi}.
\end{thm}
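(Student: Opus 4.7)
The plan is to translate the problem into symbolic dynamics and then apply monotonicity of topological entropy. Writing $\hat a,\hat b\in\Sigma$ for the binary expansions of $a,b$, a sequence $w$ codes a point of $\mathcal J(a,b)$ iff for every $n\ge 0$ the shift $\sigma^n w$ is lexicographically $\preceq\hat a$ (when $w_{n+1}=0$) or $\succeq\hat b$ (when $w_{n+1}=1$). Denote this subshift by $\Sigma(a,b)$. Since $\pi$ is Lipschitz and at most two-to-one, $\dim_H\mathcal J(a,b)=h_{top}(\Sigma(a,b))/\log 2$, exactly as in the closing line of Lemma~\ref{lem:simple}(ii). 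Hence it suffices to determine when $\Sigma(a,b)$ has positive topological entropy.

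The set $\Sigma(a,b)$ shrinks as the hole grows, so $h_{top}(\Sigma(a,b))$ is non-increasing in $b$ (and non-decreasing in $a$). Consequently, for each fixed $a$ there is a unique threshold
\[
\chi_*(a):=\sup\{b\in(1/2,1):h_{top}(\Sigma(a,b))>0\},
\]
and the theorem reduces to identifying $\chi_*$ with the function $\chi$ supplied by Theorem~\ref{thm:chi}.

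The ``easy'' direction $b<\chi(a)\Rightarrow\dim_H\mathcal J(a,b)>0$ I would handle by exhibiting an explicit subshift of finite type $Y\subset\Sigma(a,b)$ of positive entropy. The natural construction is to pick two admissible finite words $u=u(a)$ and $v=v(b)$, built from initial segments of $\hat a$ and $\hat b$ together with suitable ``buffer'' symbols, so that every free concatenation in $\{u,v\}^{\mathbb N}$ remains in $\Sigma(a,b)$; this already yields $h_{top}(Y)\ge(\log 2)/\max(|u|,|v|)>0$. Optimising $u,v$ so as to recover the exact curve $\chi$ parallels the kneading constructions in \cite{GLT,GPTT,HS}, and the uniform estimate of Lemma~\ref{lem:simple}(ii) can be used as a starting point for the induction on cylinder depth.

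The hard direction is the matching upper bound $b\ge\chi(a)\Rightarrow\dim_H\mathcal J(a,b)=0$, and in particular the boundary case $b=\chi(a)$, where monotonicity alone only delivers a limiting zero entropy, not a value at the threshold. Here I would invoke the explicit description of $\chi$ from Theorem~\ref{thm:chi}: the critical pair $(a,\chi(a))$ should be characterised by a pair of distinguished kneading-like sequences generalising the Thue--Morse sequence of the symmetric case \cite{GS}. One then argues that every $w\in\Sigma(a,\chi(a))$ is either preperiodic or a shift of this kneading sequence, so $\Sigma(a,\chi(a))$ is the orbit closure of a single substitutive-type sequence and therefore has zero topological entropy. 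Transplanting the kneading and renormalisation machinery developed for piecewise monotone interval maps in \cite{GLT,GPTT,GH,HS} to the present two-sided admissibility setting is the principal technical hurdle; once it is in place, the two directions combine to yield the theorem.
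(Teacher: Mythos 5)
Your outline correctly reduces the problem to a symbolic entropy threshold, but it diverges from the paper's route in a way that leaves the main content unproved. The paper does \emph{not} establish the dimension dichotomy directly: it imports wholesale the result of Labarca and Moreira \cite[Theorem~2]{LM}, which says that the positive-dimension region coincides exactly with $\{b<\chi(a)\}$ where $\chi(a)$ is the \emph{uncountability} threshold $\sup\{b:\mathcal J(a,b)\ \text{is uncountable}\}$; all of the paper's effort then goes into computing $\chi$ explicitly (Theorem~\ref{thm:chi}). Your plan instead proposes to prove the dichotomy from scratch, and the two steps you leave as sketches are precisely where the difficulty lives. For the lower bound, a freely concatenated SFT $\{u,v\}^{\mathbb N}$ built from prefixes of $\hat a$ and $\hat b$ does not reach the exact threshold: the paper needs the subshifts $W_n$ of Proposition~\ref{prop-entropy}, built from blocks $s^{n_i}s_1\dots s_\ell$ where $t$ is a cyclic permutation of $s$, together with the extremality calculus of Proposition~\ref{prop:ext} and Lemma~\ref{lem:ILcap}, to certify positive entropy for every $b\prec t_ns_n^\infty$. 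You give no candidate words $u,v$ and no admissibility verification, so the ``easy'' direction is not actually established up to the curve $\chi$.

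The upper bound contains a concrete error as stated. At $b=\chi(a)$ the set $\Sigma(a,\chi(a))$ is in general \emph{not} the orbit closure of a single substitutive sequence, nor is every element preperiodic or a shift of one kneading sequence: Proposition~\ref{prop:a-chia} shows that for $a\in\mathcal S$ it is a full Sturmian system, and for $a=\mathfrak s(\bm r)$ it contains the uncountable family $s_1^{i_1}s_2^{i_2}\cdots$ with arbitrary exponents $i_m\ge1$, which is not a single orbit closure. Zero entropy at the threshold therefore requires either the polynomial-growth count of balanced words, or, on the plateaus, the inductive countability argument of Lemma~\ref{lem:countable} (which uses the permutation structure (\ref{eq:pprime}) of cyclically balanced words level by level); and even then one must separately know that the entropy threshold and the uncountability threshold agree, which is exactly the content of \cite[Theorem~2]{LM} that you would be reproving. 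Finally, your reduction ignores the range $(0,1/4]\times(1/2,1)$ and $(0,1/2)\times[3/4,1)$, which the paper disposes of via Lemma~\ref{lem:simple}. In short, the skeleton is reasonable but the proposal defers every load-bearing step to ``parallels the kneading constructions'' and asserts one false structural claim at the critical value.
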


Theorem~\ref{thm:chi} requires too many preliminaries to be quoted here. Instead, we would like to present Figure~\ref{fig:b1} which may serve as a good graphic illustration of Theorem~\ref{thm:main}.

\begin{figure}[h!]\begin{center}
\includegraphics[width=0.8\textwidth]{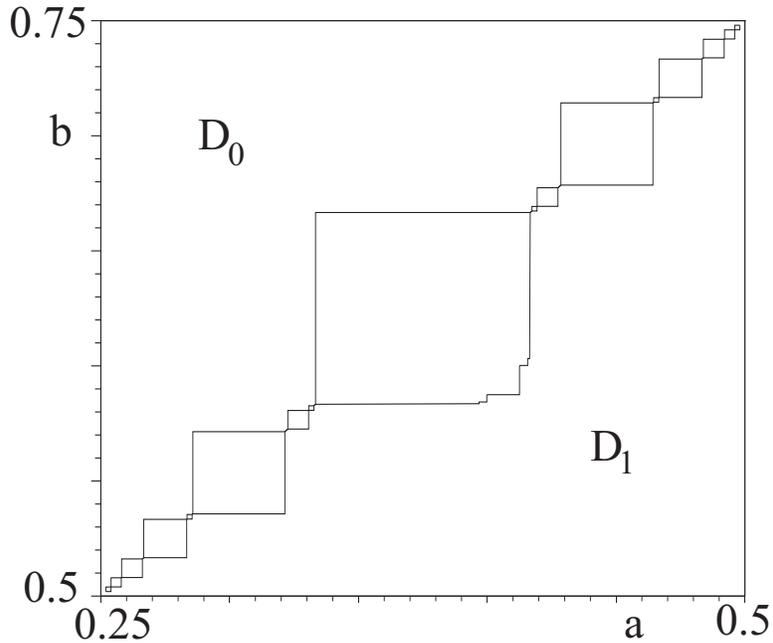}\end{center}
\vspace{-0.3cm}
\caption{\sf
 $(a,b)$-plane with $\frac{1}{4}<a<\frac{1}{2}$ and $\frac{1}{2}<b<\frac{3}{4}$ showing the
regions $D_0$ and $D_1$. The upper or left hand boundary is an approximation to $\partial D_0$ using rotation numbers $p/q$, $q=2,\dots 7$, and the lower, right hand curve is an upper bound for $\partial D_1$ using the same rotation numbers together with a second level of renormalization with rotation numbers $p/q$, $q=2,3,4$, in the $1/2$ box.}
\label{fig:b1}\end{figure}

The structure of the paper is as follows: in Section~\ref{sec-main} we give a full description of $D_0$ (Theorem~\ref{thm:D0}) and $D_1$ (Theorem~\ref{thm:main}). In Section~\ref{sec-critical} we apply our results to give a full description of first and second order critical holes for $T$ introduced in \cite{SSC}. In Section~\ref{sec-misc} we introduce an iterated function system which is naturally associated with our model and prove a claim which links our model to the well known ``transition to chaos'' phenomenon (Proposition~\ref{prop:route-chaos}). In the same section we describe the set of pairs $(a,b)$ for which $\mathcal J(a,b)$ is an uncountable set of zero Hausdorff dimension (Theorem~\ref{thm:zerodim}). Finally, in Appendix we give proofs of some technical results from Section~\ref{sec-main}.

\section{Main construction}
\label{sec-main}

\subsection{Definitions and lemmas for extremals} We need several auxiliary results first whose proofs are given in Appendix. A pair $(s,t)$ of finite words with alphabet $\{0,1\}$, where $s_1=0$ and $t_1=1$ is \emph{extremal} if and only if the inequalities
\begin{equation}\label{eq:extremal}\begin{array}{ll}
s^\infty \preceq \sigma^ks^\infty \prec t^\infty, & k=1, \dots ,|s|-1\\
s^\infty \prec \sigma^\ell t^\infty \preceq t^\infty, & \ell =1, \dots ,|t|-1
\end{array}
\end{equation}
\emph{do not} hold. In dynamical systems theory this implies that the pair $(s^\infty ,t^\infty )$ is the kneading invariant of
an expanding Lorenz map \cite{HS}.

We will use the notation $s(0,1)$ and $t(0,1)$ if the choice of alphabet is needed explicitly. Thus if $T\in\{0,1\}^n$, i.e.
$T=T(0,1)$ then $T(s,t)$ is the sequence of zeroes and ones obtained by replacing each $0$ in $T(0,1)$ by the string $s$ and
each $1$ in $T(0,1)$ by $t$. From here on any sequence denoted by $s$ or $S$ starts with zero and every sequence $t$ or $T$
starts with one. We need the following proposition:

\begin{prop}\label{prop:ext}
If $(s,t)$ and $(S,T)$ are extremal pairs, then so is $(S(s,t),T(s,t))$.
\end{prop}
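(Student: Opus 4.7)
The plan is to verify, for the pair $(\hat s,\hat t):=(S(s,t),T(s,t))$, that the ``bad'' inequalities in \eqref{eq:extremal} fail at every index. It is convenient to introduce the substitution $\phi\colon\{0,1\}^{\mathbb{N}}\to\{0,1\}^{\mathbb{N}}$ defined by $\phi(0)=s$ and $\phi(1)=t$, so that $\hat s^\infty=\phi(S^\infty)$ and $\hat t^\infty=\phi(T^\infty)$. The only elementary fact I need about $\phi$ is that it is strictly order-preserving: if $u\prec v$ first differ at position $i$, then $\phi(u)$ and $\phi(v)$ agree on $\phi(u_1\cdots u_{i-1})$ and then split, because $s$ begins with $0$ and $t$ with $1$; hence $\phi(u)\prec\phi(v)$.

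For each $k\in\{1,\dots,|\hat s|-1\}$ I would write $k=L_j+r$ uniquely, with $L_j:=|\phi(S_1\cdots S_{j-1})|$ and $0\le r<|w(S_j)|$, yielding the decomposition
\[
\sigma^k\hat s^\infty=\sigma^r\bigl(w(S_j)\bigr)\cdot\phi(\sigma^j S^\infty).
\]
In the boundary case $r=0$ this collapses to $\phi(\sigma^{j-1}S^\infty)$ with $j-1\in\{1,\dots,|S|-1\}$; extremality of $(S,T)$ yields $\sigma^{j-1}S^\infty\prec S^\infty$ or $\sigma^{j-1}S^\infty\succeq T^\infty$, and the order-preservation of $\phi$ transfers this to $\sigma^k\hat s^\infty\prec\hat s^\infty$ or $\sigma^k\hat s^\infty\succeq\hat t^\infty$, as required.

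The interior case $r\ge 1$ is the heart of the argument. Assume $S_j=0$, so $w(S_j)=s$ (the case $S_j=1$ is symmetric). Extremality of $(s,t)$ provides $\sigma^r s^\infty\prec s^\infty$ or $\sigma^r s^\infty\succeq t^\infty$, with the decisive position $r^\ast\le|s|$. When $r^\ast\le|s|-r$ the disagreement already falls inside the common initial segment of length $|s|-r$, so it translates verbatim to a disagreement of $\sigma^k\hat s^\infty$ with $\hat s^\infty$ or $\hat t^\infty$. The main obstacle is the subcase $r^\ast>|s|-r$: here the first $|s|-r$ symbols of $\sigma^r s^\infty$ and $s^\infty$ coincide, forcing $\sigma^r(s)$ to be a prefix of $s$, and a direct computation with the periodic word $s^\infty$ then yields $\sigma^{|s|-r}s^\infty\succ s^\infty$ (with first difference at position $r^\ast-(|s|-r)$); extremality of $(s,t)$ therefore forces the companion relation $\sigma^{|s|-r}s^\infty\succeq t^\infty$. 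Writing $\tau:=s_{|s|-r+1}\cdots s_{|s|}$, this reads $\tau\cdot s^\infty\succeq t^\infty$, and the comparison $\sigma^k\hat s^\infty$ versus $\hat s^\infty$ reduces to $\phi(\sigma^j S^\infty)$ versus $\tau\cdot\phi(\sigma S^\infty)$, which I would decide block by block using $\tau\cdot s^\infty\succeq t^\infty$ together with extremality of $(S,T)$ applied at the first mismatched block boundary. The verification of the inequalities involving shifts of $\hat t^\infty$ is entirely analogous, with the roles of $s,t$ and of $S,T$ interchanged.
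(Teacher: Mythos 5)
Your skeleton is essentially the paper's: split the shifts of $S(s,t)^\infty$ into block-boundary positions (handled by order-preservation of your substitution $\phi$ plus extremality of $(S,T)$) and interior positions (handled by extremality of $(s,t)$), and your derivation of $\tau s^\infty\succeq t^\infty$ in the hard subcase is exactly the reflection property the paper isolates as Lemma~\ref{lem:one}. The block-boundary case and the interior case with $r^\ast\le|s|-r$ are fine. But there is a genuine gap precisely where the difficulty lives: the comparison of $\phi(\sigma^jS^\infty)$ with $\tau\cdot\phi(\sigma S^\infty)$ is only announced, not performed, and the tool you name for it --- extremality of $(S,T)$ ``at the first mismatched block boundary'' --- is not the right one, because the two sides are offset by $|\tau|=r$ symbols so their $s,t$-blocks never align, and extremality of $(S,T)$ compares shifts of $S^\infty$ against $S^\infty$ and $T^\infty$, not two unaligned words against each other. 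What actually closes this case is the containment of every $s,t$-concatenation in $I_L\cup I_R$ of \eqref{eq:ILIR}, i.e.\ $\phi(w)\succeq s^\infty$ if $w_1=0$ and $\phi(w)\preceq t^\infty$ if $w_1=1$: when $S_{j+1}=0$ the comparison is decided at the first symbol, and when $S_{j+1}=1$ one chains $\phi(\sigma^jS^\infty)\preceq t^\infty\preceq\tau s^\infty\preceq\tau\phi(\sigma S^\infty)$, plus a short check that not all three can be equalities.

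A second omission: for $r^\ast>|s|-r$ you treat only the alternative $\sigma^rs^\infty\prec s^\infty$, in which $\sigma^r(s)$ is a prefix of $s$. The alternative $\sigma^rs^\infty\succeq t^\infty$ can equally have its decisive position beyond $|s|-r$; then $s_{r+1}\dots s_{|s|}$ is a prefix of $t^\infty$, the shifted word begins with a $1$, and the required conclusion is $\sigma^k\hat s^\infty\succeq\hat t^\infty$, needing the mirror argument with $\sigma^{|s|-r}t^\infty\preceq s^\infty$ in place of $\tau s^\infty\succeq t^\infty$. The paper sidesteps all of this tail bookkeeping by proving the interval statement $\sigma^kI_L\cap(s^\infty,t^\infty)=\varnothing$ (Lemma~\ref{lem:ILcap}): once every concatenation of $s$ and $t$ is known to lie in $I_L\cup I_R$, the shifted point is trapped outside $(s^\infty,t^\infty)\supseteq(S(s,t)^\infty,T(s,t)^\infty)$ whatever its tail, and no block-by-block comparison is ever needed. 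Your route can be completed, but as written the decisive step is missing.
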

\begin{proof}See Appendix.
\end{proof}

Note that we do not actually need that $(S,T)$ are well-ordered or that the length of $s$ and $t$ are equal, although this is the
case in the application we have in mind. This result is essentially part of the argument in \cite{GS}, though the proof below is considerably shorter.

Assume now that $t$ is a cyclic permutation of $s$, i.e., there exists $\ell\in\{1,\dots, N-1\}$ such that
\[
s_{\ell+1}\dots s_Ns_1\dots s_\ell=t_1\dots t_N.
\]
Put for any sequence $(n_1,n_2,\dots)\in\BN^\infty$,
\[
w(n_1,n_2,\dots)=s^{n_1}s_1\dots s_\ell s^{n_2}s_1\dots s_\ell\dots
\]
Fix $n\in\BN$ and define
\[
W_n=\overline{\{\sigma^jw(n_1,n_2,\dots) \mid n_i\in\{n,n+1\}\ \text{for all}\  i \ \text{and}\ j\ge0\}}.
\]
Clearly, $\sigma W_n=W_n$. Since we have $n_i\in\{n,n+1\}$ without restrictions, the number of 0-1 words of length~$N$ which can be extended to sequences in $W_n$ grows exponentially with $N$, whence $h_{top}(\sigma|_{W_n})>0$.

\begin{prop}\label{prop-entropy} For any 0-1 word $u\in (st^\infty,ts^\infty)$ there exists $n\in\BN$ such that
\[
W_n\cap (s^\infty, u)=\varnothing.
\]
\end{prop}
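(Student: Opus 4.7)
\emph{Approach.} Since $\Sigma\setminus(s^\infty,u)=\{x:x\preceq s^\infty\}\cup\{x:x\succeq u\}$ is closed in the product topology and $W_n$ is by definition the closure of $\{\sigma^jw(\mathbf n):j\ge 0,\ \mathbf n\in\{n,n+1\}^{\mathbb N}\}$, it suffices to verify that every individual $\sigma^jw(\mathbf n)$ lies in this complement, provided $n$ is taken large enough in terms of $u$. The workhorse is the identity
\[
s^m B = B\,t^m,\qquad B:=s_1\dots s_\ell,
\]
which follows inductively from $sB=Bt$, itself immediate from $t$ being the $\ell$-th cyclic shift of $s$. It also yields the dual representation $w(\mathbf n)=Bt^{n_1}Bt^{n_2}B\cdots$ of the same word.

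\emph{Case analysis on the offset of $j$.} Let $k\in\{0,1,\dots,|s|-1\}$ denote the offset of $j$ inside the current $s$-copy of some $s^{n_i}$ block (positions of $j$ falling in a separating $B$ are handled analogously via the dual representation). \textbf{(S) $k=0$:} $\sigma^jw=s^m B s^{n_{i+1}}B\cdots$ matches $s^\infty$ through $s^m B$, and then $\sigma^jw$ has $s_1=0$ while $s^\infty$ has $s_{\ell+1}=t_1=1$, so $\sigma^jw\prec s^\infty$. \textbf{(T) $k=\ell$:} the algebra $s_{\ell+1}\dots s_N\cdot B=t$ combined with the identity yields $\sigma^jw=t^{m'}s^{n_{i+1}}Bs^{n_{i+2}}B\cdots$ with $m'\ge 1$. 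If $m'\ge 2$, position $|t|+1$ shows $\sigma^jw\succ ts^\infty\succ u$. If $m'=1$, $\sigma^jw$ agrees with $ts^\infty$ on at least $(n+1)|s|+\ell$ initial symbols; taking $n$ large enough that $(n+1)|s|+\ell$ exceeds the finite index $q=q(u)$ at which $u$ and $ts^\infty$ first differ gives $\sigma^jw_q=(ts^\infty)_q>u_q$, hence $\sigma^jw\succ u$. \textbf{(C) $0<k<|s|$ with $k\ne\ell$:} set $y:=\sigma^ks^\infty$. A direct calculation shows that $\sigma^jw$ and $y$ agree on the first $(|s|-k)+m|s|+\ell$ positions before $\sigma^jw$ dips below. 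Extremality of the pair $(s,t)$ forces either $y\prec s^\infty$ or $y\succeq t^\infty$; the first alternative yields $\sigma^jw\prec s^\infty$ once $n$ is large enough for the $y$-agreement to cover the (at most $|s|$-long) first-disagreement position of $y$ vs.\ $s^\infty$, and the second reduces to the $m'=1$ sub-case of (T).

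\emph{Main obstacle.} The tightest case is (C) combined with $m=0$ (i.e., $j$ lies in the \emph{last} $s$-copy of $s^{n_i}$) and $y\succeq t^\infty$: the direct agreement with $y$ is only $(|s|-k)+\ell<2|s|$ symbols, potentially shorter than $q$. The remedy is to extend the comparison past the subsequent $B$ into the following $s^{n_{i+1}}$ block via another application of $s^{n_{i+1}}B=Bt^{n_{i+1}}$, picking up $\ge n|s|$ further symbols of controlled match with an appropriate shift of $t^\infty$; combined with $ts^\infty\succ u$, this again forces $\sigma^jw\succ u$. Making this bookkeeping uniform in $(j,\mathbf n)$, so that a single threshold $n_0=n_0(u)$ suffices across all of $W_n$, is the principal technical step.
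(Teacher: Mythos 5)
Your overall strategy coincides with the paper's: reduce to the dense set of points $\sigma^j w(\mathbf n)$ (your explicit closure remark at the start is a step the paper glosses over), then run a case analysis on the offset of $j$ inside the block structure, using the identity $sB=Bt$ and the extremality dichotomy, with $n$ chosen large relative to the first index at which $u$ and $ts^\infty$ disagree. Cases (S) and (T) are correct. In case (C) the alternative $y=\sigma^k s^\infty\prec s^\infty$ is in fact immediate: you show $\sigma^j w\prec y$ (first disagreement at position $(N-k)+mN+\ell+1$, with $\sigma^jw$ carrying the $0$), so transitivity gives $\sigma^jw\prec s^\infty$ with no condition on $n$ at all; the ``once $n$ is large enough'' clause there is both unnecessary and unobtainable, since $n$ does not control the agreement length when $m=0$.

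The gap is exactly where you place it, and it is genuine. In case (C) with $y\succeq t^\infty$ and $j$ in the last $s$-copy of its block, $\sigma^j w=s_{k+1}\dots s_N\,B\,s^{n_{i+1}}B\cdots$ agrees with $y$ on only $(N-k)+\ell$ symbols, which need not reach the first index at which $y$ and $t^\infty$ disagree; and since $k\ne\ell$ the word does not begin with $t$, so it does not ``reduce to the $m'=1$ sub-case of (T).'' What one actually gets (say for $k>\ell$) is $\sigma^j w=\sigma^{k-\ell}\bigl(ts^{n_{i+1}}B\cdots\bigr)$, a nontrivial shift of a word lying just \emph{below} $ts^\infty$, within $2^{-(n+1)N}$ of it. Controlling such shifts is precisely what extremality of $(s,t)$ alone does not provide: extremality constrains $\sigma^k s^\infty$ and $\sigma^k t^\infty$, whereas here one needs that $\sigma^{k'}$ maps the entire interval $I_R=[ts^\infty,t^\infty]$ (together with a small one-sided neighbourhood of its left endpoint) outside $(s^\infty,t^\infty)$ for $k'=1,\dots,N-1$. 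That statement is the paper's Lemma~\ref{lem:ILcap}, whose proof requires the reflection property of Lemma~\ref{lem:one}; the paper closes this case by combining Lemma~\ref{lem:ILcap} with the proximity of $ts^{n_{i+1}}B\cdots$ to $ts^\infty$ and increasing $n$. Your final paragraph describes the right kind of bookkeeping but, as you acknowledge, does not carry it out, and without Lemma~\ref{lem:ILcap} or an equivalent the uniform threshold $n_0(u)$ is not established. The proof is therefore incomplete at its decisive step.
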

\begin{proof}See Appendix.
\end{proof}

\subsection{Introducing the functions} Following \cite{LM}, put for $a\in(1/4,1/2)$,

\begin{align*}
\phi(a)&=\sup\{b : \mathcal J(a,b)\neq\{0,1\}\},\\
\chi(a)&=\sup\{b : \mathcal J(a,b)\ \text{is uncountable}\}.
\end{align*}

We will show that
\[
D_0=\{(a,b)\in (1/4,1/2)\times (1/2,3/4) : b\le\phi(a)\}
\]
(Theorem~\ref{thm:D0})
and 
\[
D_1=\{(a,b)\in (1/4,1/2)\times (1/2,3/4) : b<\chi(a)\ \text{or}\ b\le\chi(a)\}.
\]
(Whether $\mathcal J(a,\chi(a))$ is countable or uncountable depends on $a$ in a non-trivial way -- see Proposition~\ref{prop:a-chia} below.)

The function~$\phi$ was studied in detail in \cite{AG}. For the reader's convenience we will provide an explicit formula for $\phi$ as a by-product of our investigation. Our main concern will be the function~$\chi$. Clearly, $\phi(a)\ge\chi(a)$ for all $a\in(0,1/2)$, and both functions are non-decreasing. Below we will show that the equality holds if and only if $a$ belongs to a subset of $(1/4,1/2)$ of zero Hausdorff dimension -- see Proposition~\ref{prop:equal}.

\begin{thm}\rm{(}\cite[Theorem~2]{LM}\rm{)} We have
\[
\{(a,b)\in(1/4,1/2)\times (1/2,3/4) : \dim_H \mathcal J(a,b)>0\} =\{(a,b) : b< \chi(a)\}.
\]
\end{thm}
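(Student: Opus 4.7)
The plan is to prove both inclusions by passing to symbolic dynamics via $\pi$: the hole-avoidance condition $T^n x\notin(a,b)$ for all $n\ge 0$ translates to the requirement that every shift $\sigma^k w$ of the itinerary of $x$ satisfies $\pi(\sigma^k w)\le a$ or $\pi(\sigma^k w)\ge b$, and the extremal-pair framework of Section~\ref{sec-main} then does the heavy lifting.

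For the inclusion $\supseteq$, suppose $b<\chi(a)$. The explicit description of $\chi(a)$ provided by Theorem~\ref{thm:chi} should yield an extremal pair $(s,t)$ in which $t$ is a cyclic permutation of $s$, $\pi(s^\infty)\le a$, and $\pi(t^\infty)\ge \chi(a)>b$. Since the latter inequality is strict, we may choose a finite 0-1 word $u\in(st^\infty,ts^\infty)$ with $\pi(u0^\infty)>b$. Proposition~\ref{prop-entropy} then furnishes $n\in\BN$ and a shift-invariant set $W_n$ of positive topological entropy with $W_n\cap(s^\infty,u)=\varnothing$. For $w\in W_n$ every shift $\sigma^k w$ lies either $\preceq s^\infty$, giving $\pi(\sigma^k w)\le a$, or $\succeq u$, giving $\pi(\sigma^k w)>b$; hence $\pi(W_n)\subseteq\mathcal J(a,b)$, and
\[
\dim_H\mathcal J(a,b)\ge\dim_H\pi(W_n)=\frac{h_{top}(\sigma|_{W_n})}{\log 2}>0.
\]

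For the reverse inclusion we argue by contrapositive: $b\ge\chi(a)$ implies $\dim_H\mathcal J(a,b)=0$. If $b>\chi(a)$, the supremum definition of $\chi$ forces $\mathcal J(a,b)$ to be at most countable, so its Hausdorff dimension vanishes. At the boundary $b=\chi(a)$ the set $\mathcal J(a,\chi(a))$ may still be uncountable (see Proposition~\ref{prop:a-chia}), but every admissible itinerary is then forced into the renormalization tower of the critical extremal pair realising $\chi(a)$, and a standard covering argument on this tower gives $\dim_H\mathcal J(a,\chi(a))=0$. The main obstacle is precisely this boundary case: for $b<\chi(a)$ the conclusion reduces cleanly to Proposition~\ref{prop-entropy} once the correct extremal pair has been extracted from Theorem~\ref{thm:chi}, whereas at criticality the topological entropy of the admissible subshift collapses to zero, and certifying vanishing Hausdorff dimension requires the full hierarchical structure of the kneading invariants together with careful covering estimates at every renormalization level.
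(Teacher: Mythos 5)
First, a point of comparison: the paper offers no proof of this statement at all --- it is imported verbatim as \cite[Theorem~2]{LM} and used as a black box (indeed, Proposition~\ref{prop:a-chia} later cites that same theorem to conclude $\dim_H\mathcal J(a,\chi(a))=0$). So your attempt is being measured against the external literature rather than against an argument in the text. That said, your $\supseteq$ half is essentially the machinery the paper \emph{does} develop for the lower bounds in Theorem~\ref{thm:chi}: extremality of $(s_n,t_n)$ via Proposition~\ref{prop:ext}, then Proposition~\ref{prop-entropy} and the positive-entropy sets $W_n$ to embed $\pi(W_n)$ in $\mathcal J(a,b)$ whenever $a\ge s_n^\infty$ and $b\le u\prec t_ns_n^\infty$. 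This is correct in outline for $a$ in a plateau; for $a\in\mathsf S$ or $a=\mathfrak s(\bm r)$ you still owe the (routine, but not stated) approximation step: use monotonicity of $\chi$ and the density of the intervals $\De(r_1,\dots,r_n)$ to find a plateau point $a''>a$ with $b<\chi(a'')=t_ns_n^\infty$, and then apply the plateau case to $\mathcal J(a'',b)\subseteq\mathcal J(a,b)$. ``Should yield'' is not that argument.

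The genuine gap is in the $\subseteq$ direction at the boundary $b=\chi(a)$. For $b>\chi(a)$ your countability argument is fine, but for $b=\chi(a)$ the sentence ``a standard covering argument on this tower gives $\dim_H\mathcal J(a,\chi(a))=0$'' is precisely the nontrivial content of the theorem, and you have not supplied it. Note that $\mathcal J(a,\chi(a))=\bigcap_{b<\chi(a)}\mathcal J(a,b)$ is a decreasing intersection of sets of \emph{positive} dimension, so no soft limiting argument applies. In the two uncountable cases one needs real estimates: for $a\in\mathcal S$ the set contains (and, up to preimages, equals) the Sturmian system $X_\ga$, and zero dimension must come from the polynomial growth of the number of balanced factors (the Mignosi bound the paper uses elsewhere); for $a=\mathfrak s(\bm r)$ the set contains the uncountable family $s_1^{i_1}s_2^{i_2}\cdots$ and one needs a hierarchical complexity/covering estimate across all renormalization levels, with uniform control of the contraction ratios $4^{-Q_n}$. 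You cannot invoke Proposition~\ref{prop:a-chia} for this, since that proposition deduces the zero-dimension statement from \cite[Theorem~2]{LM} itself --- doing so would be circular. Until this boundary case is actually proved, the inclusion $\{\dim_H\mathcal J(a,b)>0\}\subseteq\{b<\chi(a)\}$ is not established.
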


Thus, if we give an explicit formula for $\chi$, this will lead to a full description of all pairs $(a,b)$ for which $\dim_H \mathcal J(a,b)>0$.

\subsection{Symbolic background} We need some definitions and basic results from combinatorics on words -- see \cite[Chapter~2]{Loth} for a detailed exposition. For any two finite words $u=u_1\dots u_k$ and $v=v_1\dots v_n$ we write $uv$ for their concatenation $u_1\dots u_k v_1\dots v_n$. In particular, $u^m=u\dots u$ ($m$ times) and $u^\infty=uuu\dots=\lim_{n\to\infty}u^n$, where the limit is understood in the topology of coordinate-wise convergence.

From here on by a ``word'' we will mean a word whose letters are 0s and 1s. Let $w$ be a finite or infinite word. We say that a finite  word $u$ is a {\em factor of} $w$ if there exists $k$ such that $u=w_k\dots w_{k+n}$ for some $n\ge0$. For a finite word $w$ let $|w|$ stand for its length and $|w|_1$ stand for the number of 1s in $w$. The 1-{\em ratio} of $w$ is defined as $|w|_1/|w|$. For an infinite word $w_1w_2\dots$ the 1-ratio is defined as $\lim_{n\to\infty}|w_1\dots w_n|_1/n$ (if exists).

We say that a finite or infinite word $w$ is {\em balanced} if for any $n\ge1$ and any two factors $u,v$ of $w$ of length~$n$ we have $||u|_1-|v|_1|\le1$. An infinite word is called {\em Sturmian} if it is balanced and not eventually periodic. A finite word $w$ is {\em cyclically balanced} if $w^2$ is balanced. (And therefore, $w^\infty$ is balanced.) It is well known that if $u$ and $v$ are two cyclically balanced words with $|u|=|v|=q$ and $|u|_1=|v|_1=p$ and $\gcd(p,q)=1$, then $u$ is a cyclic permutation of $v$. Thus, there are only $q$ distinct cyclically balanced words of length $q$ with $p$ 1s.

We say that a finite or infinite word $u$ is {\em lexicographically smaller than} a word $v$ (notation: $u\prec v$) if either $u_1<v_1$ or there exists $n\ge1$ such that $u_i\equiv v_i$ for $i=1,\dots, n$ and $u_{n+1}<v_{n+1}$.

For any $r=p/q\in\mathbb Q\cap(0,1)$ we define the substitution $\rho_r$ on two symbols as follows: $\rho_r(0)=\omega_r^-$, the lexicographically largest cyclically balanced word of length~$q$ with 1-ratio $r$ beginning with 0, and $\rho_r(1)=\omega_r^+$, the lexicographically smallest cyclically balanced word of length~$q$ with 1-ratio $r$ beginning with 1.

\begin{rmk}There is an explicit way to construct $\omega_r^\pm$ for any given $r$. Namely, let $r=p/q\le1/2$ have a continued fraction expansion $[d_1+1,\dots,d_n]$ with $d_n\ge2$ and $d_1\ge1$ (in view of $r\le1/2$). We define the sequence of 0-1 words given by $r$ as follows: $u_{-1}=1, u_0=0,
u_{k+1}=u_k^{d_{k+1}}u_{k-1}, \ 0\le k\le n-1$. The word $u_n$ has length~$q$ and is called the $n$th {\em standard word} given by $r$. Given an irrational $\gamma\in(0,1/2)$ with the continued fraction expansion $\ga=[d_1+1,d_2,\dots]$, the word $u_\infty$ defined as the limit of the $u_n$ is called the {\em characteristic word} given by $\ga$.

Let $w_1\dots w_q:=u_n$. Then
\begin{align*}
\omega_r^-&=01w_1\dots w_{q-2},\\
\omega_r^+&=10w_1\dots w_{q-2}.
\end{align*}
For $r\in\mathbb Q\cap(1/2,1)$ we have $\om_r^\pm=h(\om_{1-r}^\mp)$, where $h(0)=1, h(1)=0$ and $h(w_1\dots w_n)=h(w_1)\dots h(w_n)$.
\end{rmk}

\begin{example}We have $\rho_{2/5}(0)=01010,\ \rho_{2/5}(1)=10010, \ \rho_{3/5}(0)=01101,\ \rho_{3/5}(1)=10101$.
\end{example}

\subsection{The function $\phi$}
Put for any $r\in\mathbb Q\cap(0,1)$,
\[
\De(r)=[(\omega_r^-)^\infty, \omega_r^-(\omega_r^+)^\infty].
\]
(From here on we will not distinguish between $x\in[0,1]$ and its dyadic expansion, in order to simplify our notation.) It was shown by the second author in \cite{SSC} that
\begin{equation}\label{eq:S}
\mathcal S:= (1/4, 1/2) \setminus \bigcup_{r\in\mathbb Q\cap (0,1)} \De(r)
\end{equation}
has zero Hausdorff dimension. Now we give an explicit formula for $\phi$ for all $a\in(1/4,1/2)\setminus\mathcal S$.

\begin{prop}\label{prop:phi}
If $a\in\De(r)$ for some $r\in\mathbb Q\cap(0,1)$, then $\phi(a)\equiv (\omega_r^+)^\infty$. Thus, $\phi$ is piecewise constant with an infinite countable set of plateaus and the exceptional set $\mathcal S$.
\end{prop}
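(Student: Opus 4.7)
The plan is to establish the two inequalities $\phi(a)\ge(\omega_r^+)^\infty$ and $\phi(a)\le(\omega_r^+)^\infty$ separately for $a\in\Delta(r)$; throughout I abbreviate $s=\omega_r^-$ and $t=\omega_r^+$, so that $\Delta(r)=[s^\infty,st^\infty]$. For the lower bound I would exhibit $\pi(s^\infty)$ as an explicit witness that $\mathcal J(a,t^\infty)\neq\{0,1\}$. The forward orbit of $s^\infty$ under $\sigma$ consists of the periodic sequences arising from the cyclic shifts of $s$; each cyclic shift of $s$ is itself cyclically balanced of length $q=|s|$ with $1$-ratio $r$, so by the extremal definitions of $\omega_r^\pm$ every shift of $s^\infty$ beginning with $0$ is lexicographically $\preceq s^\infty\preceq a$, while every shift beginning with $1$ is $\succeq t^\infty$. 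Neither option lies in $(a,t^\infty)$, so $\phi(a)\ge t^\infty$.

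For the upper bound I would argue by contradiction: fix $b>t^\infty$ and suppose some $x\in\mathcal J(a,b)\cap(0,1)$ exists. Since $a<1/2<b$ the point $1/2$ lies in the hole and the $T$-orbit of every dyadic rational in $(0,1)$ meets $1/2$, so $x$ is not dyadic and its expansion $w$ is the unique non-eventually-constant representation, containing infinitely many $0$s and $1$s. Form the compact, shift-invariant orbit closure $K=\overline{\{\sigma^n w:n\ge0\}}$; passing the hole-avoidance condition to the closure yields $v\preceq a$ whenever $v_1=0$ and $v\succeq b$ whenever $v_1=1$, for every $v\in K$. Put
\[
M=\sup\{v\in K:v_1=0\},\qquad m=\inf\{v\in K:v_1=1\},
\]
both attained in $K$ by compactness, and both sets non-empty by the observation on $w$. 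The constraints now give $M\preceq st^\infty$ and $m\succ t^\infty$.

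Shift-invariance of $K$ combined with the sup/inf characterisation forces $(M,m)$ to be a kneading pair for an expanding Lorenz map: every shift of $M$ or of $m$ beginning with $0$ is $\preceq M$, and every shift beginning with $1$ is $\succeq m$. The crucial structural input, extracted via Proposition~\ref{prop:ext} from the Lorenz kneading classification of \cite{AG} and \cite{HS}, is that the rotation number associated to a kneading pair is monotone, and at rational rotation number $r$ it forces $m=t^\infty$ together with $M\in[s^\infty,st^\infty]$; in particular, $m\succ t^\infty$ pushes the rotation number strictly above $r$ and necessarily forces $M\succ st^\infty$. This contradicts $M\preceq st^\infty$, so $\mathcal J(a,b)=\{0,1\}$ and $\phi(a)\le t^\infty$. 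The piecewise-constant assertion then follows formally: the plateaus $\{\Delta(r)\}_{r\in\mathbb Q\cap(0,1)}$ are countable and pairwise disjoint (two overlapping plateaus would force two distinct values $(\omega_r^+)^\infty$ to coincide), and their complement in $(1/4,1/2)$ is exactly the zero-dimensional exceptional set $\mathcal S$ of \eqref{eq:S}.

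The main obstacle is this last kneading classification step — arguing self-containedly that the strict inequality $m\succ t^\infty$ propagates to the strict inequality $M\succ st^\infty$. A direct proof would trace the first position at which $m$ deviates from $t^\infty$ through the kneading inequalities for $m$, and then use the cyclic-balance and substitution structure of $\omega_r^\pm$ (together with the composition law of Proposition~\ref{prop:ext}) to match the resulting combinatorial prefix with the $s$-prefix of $M$ and extend it lexicographically past $st^\infty$. This renormalisation content is essentially the technical core of \cite{AG}, which is why the reference is unavoidable at this point in the argument.
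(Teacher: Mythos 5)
Your lower bound is correct and is essentially the paper's own: the witness is the orbit of $s^\infty$, which avoids $(a,t^\infty)$ because every shift of $s^\infty$ beginning with $0$ is $\preceq s^\infty\preceq a$ and every shift beginning with $1$ is $\succeq t^\infty$, by the extremality defining $\omega_r^{\pm}$ among cyclically balanced words. The problem is the upper bound. After extracting the pair $(M,m)$ from the orbit closure, you reduce everything to the assertion that a kneading pair with $m\succ t^\infty$ must have $M\succ st^\infty$, and you then declare this to be ``the technical core of \cite{AG}'' and leave it unproved. But that assertion, ranging over all $a\in\Delta(r)$ at once, is precisely \cite[Theorem~7]{AG} restricted to the plateau --- i.e.\ a restatement of the proposition in kneading language rather than a reduction to something simpler --- and the sketch you offer (``the rotation number associated to a kneading pair is monotone\dots'') is not an argument: you never define a rotation number for $(M,m)$ nor verify it is well defined for the pairs your construction produces. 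Note also that the implication is false for bare kneading pairs: $(0^\infty,1^\infty)$ satisfies all of your shift inequalities, has $m=1^\infty\succ t^\infty$, yet $M=0^\infty\prec st^\infty$; so some nondegeneracy beyond ``$w$ has infinitely many $0$s and $1$s'' must be isolated and actually used. This unproved classification step is a genuine gap, and it is the step carrying the entire upper bound.

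The paper closes the same gap with a much smaller citation plus an elementary dynamical argument, which you could adopt. It invokes \cite[Corollary~3.6]{SSC} only at the left endpoint: $\mathcal J(s^\infty,b)=\{0,1\}$ for every $b\succ t^\infty$, hence $\phi(s^\infty)=t^\infty$. For $a\in(s^\infty,st^\infty]$ it then uses that $T^q$ maps $[s^\infty,st^\infty]$ affinely and expansively onto $[s^\infty,t^\infty]$: if an orbit avoiding $(a,b)\supseteq(st^\infty,t^\infty]$ ever enters $(s^\infty,a]$, its successive $T^q$-images are driven away from $s^\infty$ until they land in $(st^\infty,t^\infty]\subset(a,b)$, a contradiction; if it never enters $(s^\infty,a]$, it already lies in $\mathcal J(s^\infty,b)=\{0,1\}$. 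Monotonicity of $\phi$ (or your direct orbit argument) supplies the matching lower bound across the plateau. If you wish to keep the orbit-closure framework, you must either prove the kneading classification you invoke or replace it by this two-step argument.
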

\begin{proof}This is a simple consequence of \cite[Theorem~7]{AG}, however, our proof is straightforward, and it should help the reader to understand better the more difficult case of the function $\chi$.

Let within this proof $s:=\omega_r^-, t:= \omega_r^+$. The pair $(s,t)$ is extremal by construction, where $\mathcal J(s^\infty, t^\infty)\supset \{T^k(s^\infty) : k\ge0\}$. In fact, we have an equality here (see \cite[Corollary~3.6]{SSC}), whence $\phi(s^\infty)\ge t^\infty$. On the other hand, by the same result, $\mathcal J(s^\infty, b)=\{0,1\}$ for any $b>t^\infty$, which implies $\phi(s^\infty)=t^\infty$.

Let now $r=p/q$ in the least terms. We have that if $a\in(s^\infty, st^\infty]$, then $T^q(a)\in[s^\infty, t^\infty]$, whence $\mathcal J(a,b)=\{0,1\}$ for any $b>t^\infty$, i.e., $\phi(a)$ cannot be larger than $\phi(s^\infty)$ for $a$ in this range.
\end{proof}

As a corollary we obtain a full description of the set $D_0$.

\begin{thm}\label{thm:D0}
We have
\[
D_0=\{(a,b)\in (1/4,1/2)\times (1/2,3/4) : b\le\phi(a)\},
\]
where $\phi$ is given by Proposition~\ref{prop:phi}.
\end{thm}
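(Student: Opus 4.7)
The plan is to prove both inclusions. The inclusion $D_0\subseteq\{(a,b):b\le\phi(a)\}$ is immediate from the definition of $\phi$ as a supremum: if $\mathcal{J}(a,b)\neq\{0,1\}$ then $b$ lies in the set over which the sup is taken, so $b\le\phi(a)$. For the reverse inclusion I rely on two elementary properties of the family $\mathcal{J}(a,\cdot)$: monotonicity, $\mathcal{J}(a,b_1)\supseteq\mathcal{J}(a,b_2)$ whenever $b_1\le b_2$ (since shrinking the hole preserves more points); and the closedness identity $\mathcal{J}(a,b)=\bigcap_{b'<b}\mathcal{J}(a,b')$, which follows from the openness of $(a,b)$. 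The strict case $b<\phi(a)$ is then immediate: pick $b'\in(b,\phi(a))$ with $\mathcal{J}(a,b')\neq\{0,1\}$ (guaranteed by the supremum property), whence $\mathcal{J}(a,b)\supseteq\mathcal{J}(a,b')\neq\{0,1\}$.

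The crux is the boundary case $b=\phi(a)$, i.e., showing that the supremum defining $\phi(a)$ is attained. If $a\in\Delta(r)$ for some $r=p/q\in\mathbb Q\cap(0,1)$, Proposition~\ref{prop:phi} gives $\phi(a)=(\omega_r^+)^\infty$. Writing $s=\omega_r^-$ and $t=\omega_r^+$, the proof of Proposition~\ref{prop:phi} already exhibits $s^\infty\in\mathcal{J}(s^\infty,t^\infty)\setminus\{0,1\}$ via the extremality of $(s,t)$, which forces every $T^k s^\infty$ to lie in $[0,s^\infty]\cup[t^\infty,1]$. Since $a\in[s^\infty,st^\infty]$ implies $(a,t^\infty)\subseteq(s^\infty,t^\infty)$, we conclude $s^\infty\in\mathcal{J}(a,\phi(a))\setminus\{0,1\}$.

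For $a$ in the exceptional set $\mathcal{S}$ I would pass to a limit of this construction. Choose rationals $r_n$ so that $\Delta(r_n)\subset(0,a)$ with right endpoints $s_n t_n^\infty\uparrow a$, and set $x_n:=s_n^\infty\in(1/4,1/2)$. The inequality $s_n^\infty<s_n t_n^\infty\le a$ yields $(a,t_n^\infty)\subseteq(s_n^\infty,t_n^\infty)$, so $x_n\in\mathcal{J}(a,t_n^\infty)$. Any subsequential limit $x^*=\lim x_{n_k}$ lies in $[1/4,1/2]$, so $x^*\notin\{0,1\}$; and assuming $t_n^\infty\uparrow\phi(a)$, the closedness identity together with monotonicity gives $x^*\in\bigcap_{b'<\phi(a)}\mathcal{J}(a,b')=\mathcal{J}(a,\phi(a))$.

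The main obstacle is establishing the left-continuity $t_n^\infty\to\phi(a)$ for $a\in\mathcal{S}$ in the last paragraph. This rests on a combinatorial analysis of how the plateaus $\Delta(r)$ pack the interval $(1/4,1/2)$ in a Farey-like manner, with adjacent plateaus meeting (in the appropriate sense) at their common endpoints and leaving only the dimension-zero residue $\mathcal{S}$; this structural fact is implicit in the construction of $\omega_r^\pm$ and in Proposition~\ref{prop:phi}, and guarantees that both one-sided limits of $\phi$ at $a\in\mathcal{S}$ coincide with $\phi(a)$.
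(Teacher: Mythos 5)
Your reduction is the same as the paper's: everything comes down to showing $\mathcal J(a,\phi(a))\neq\{0,1\}$, and your handling of the case $a\in\De(r)$ is essentially identical to the paper's one-line argument --- since $a\ge s^\infty$ the hole $(a,t^\infty)$ is contained in $(s^\infty,t^\infty)$, and Proposition~\ref{prop:phi} already supplies the surviving orbit of $s^\infty$. Your preliminary reductions (the inclusion $D_0\subseteq\{b\le\phi(a)\}$ from the supremum, monotonicity in $b$, closedness of $\mathcal J(a,b')$ and the identity $\mathcal J(a,b)=\bigcap_{b'<b}\mathcal J(a,b')$) are all correct; in particular the preimages of the open hole under $T$ are open despite the discontinuity of $T$ at $1/2$, so $\mathcal J$ is indeed closed.

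The divergence, and the one genuine gap, is the case $a\in\mathcal S$. The paper disposes of it by forward reference to Propositions~\ref{prop:equal} and~\ref{prop:a-chia}: there $\phi(a)=\chi(a)=a+1/4$ and $\mathcal J(a,a+1/4)$ contains the whole Sturmian system $\overline{\{T^na : n\ge0\}}$, which is uncountable. You instead run a compactness argument with the periodic points $s_n^\infty$ of intervals $\De(r_n)$ accumulating at $a$ from the left; this is a legitimate alternative route, but it stands or falls on the claim $t_n^\infty\uparrow\phi(a)$, which you acknowledge and then only gesture at (``Farey-like packing''). That claim is exactly the nontrivial statement $\phi(a)=a+1/4$ for $a\in\mathcal S$, and it does \emph{not} follow from Proposition~\ref{prop:phi}, which is silent on $\mathcal S$. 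It \emph{is} provable from material in the paper: the computation $\sum_{q\ge2}\varphi(q)/\bigl(4(2^q-1)\bigr)=1/4$ shows the $\De(p/q)$ fill $(1/4,1/2)$ up to measure zero, so any approximating sequence has $q_n\to\infty$, whence $|\De(r_n)|\to0$ gives $s_n^\infty\to a$ and $t_n^\infty=s_n^\infty+2^{q_n}/\bigl(4(2^{q_n}-1)\bigr)\to a+1/4$; the matching upper bound $\phi(a)\le a+1/4$ comes from monotonicity of $\phi$ together with intervals $\De(r)$ approaching $a$ from the right. Until you write that out, the $\mathcal S$ case is incomplete. (Note also that your limit point is $x^*=a$ itself, which is consistent with the paper's description of $a$'s orbit closure as the Sturmian system avoiding $(a,a+1/4)$; what the paper's route buys in addition is uncountability of $\mathcal J(a,\phi(a))$, not merely nontriviality.)
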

\begin{proof}It suffices to show that $\mathcal J(a,\phi(a))$ is infinite. For $a\in\mathcal S$ this follows from Propositions~\ref{prop:equal} and \ref{prop:a-chia} below. If $a\in\De(r)$ for some $r$, then the claim follows from Proposition~\ref{prop:phi} and the fact that $\mathcal J(s^\infty,t^\infty)$ is infinite (\cite[Corollary~3.6]{SSC}).
\end{proof}

\subsection{The function $\chi$} Let now $\bm r=(r_1,r_2, \dots)$ be a finite or infinite vector with each component $r_i\in\mathbb Q\cap(0,1)$. We define the sequences of 0-1 words parametrized by $\bm r$ as follows:
\begin{align*}
s_n &= \rho_{r_1}\dots \rho_{r_n}(0),\\
t_n &= \rho_{r_1}\dots \rho_{r_n}(1).
\end{align*}

\begin{example}For $r_1=1/2, r_2=1/3$ we have $s_2=\rho_{1/2}\rho_{1/3}(0)=\rho_{1/2}(010)=011001$ and $t_2=\rho_{1/2}(100)=100101$.
\end{example}

\begin{rmk}This construction appeared in \cite{BS} in connection with the study of $T$-invariant sets.
\end{rmk}

Put
\begin{align*}
\De(r_1,\dots,r_n)&=[s_n^\infty, s_nt_n^\infty],\\
\wt\De(r_1,\dots,r_n)&=[s_nt_ns_n^\infty, s_nt_n^\infty].
\end{align*}

Let $r_i=p_i/q_i$ for $1\le i\le n$ and put $Q_n=q_1\dots q_n$. Since $|\rho_r(j)|=q$ for $j=0,1$, we have $|s_n|=|t_n|= Q_n$.

\begin{lemma}Fix $(r_1,\dots,r_{n-1})$. Then for any $r_n\in\mathbb Q\cap(0,1)$ we have

\begin{equation}\label{eq:sub}
\De(r_1,\dots, r_n)\subset \wt\De(r_1,\dots, r_{n-1}).
\end{equation}
Furthermore,
\begin{equation}\label{eq:disjoint}
\De(r_1,\dots, r_n)\cap \De(r'_1,\dots, r'_n)=\varnothing\ \ \text{if}\ \ (r_1,\dots, r_n)\neq (r'_1,\dots, r'_n).
\end{equation}
Finally,
\begin{equation}\label{eq:zerohd}
\dim_H \mathcal S_n(r_1,\dots, r_{n-1})=0,
\end{equation}
where
\begin{equation}\label{eq:Sn}
\mathcal S_n(r_1,\dots, r_{n-1}):=\wt\De(r_1,\dots, r_{n-1})\setminus \bigcup_{r_n\in\mathbb Q\cap(0,1)} \De(r_1,\dots, r_n).
\end{equation}
\end{lemma}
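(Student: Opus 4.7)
The three claims hinge on the substitution $\rho := \rho_{r_1}\cdots\rho_{r_{n-1}}$. Since each $\rho_{r_j}$ sends $0$ and $1$ to two words of common length $q_j$ beginning with $0$ and $1$ respectively, the composition $\rho$ sends $0\mapsto s_{n-1}$ and $1\mapsto t_{n-1}$, both of length $Q_{n-1}$ and beginning with $0$ and $1$. Hence $\rho$ is strictly lex-order-preserving on $\{0,1\}$-sequences and, via binary expansions, induces a Lipschitz monotone map $\tilde\rho\colon[0,1]\to[0,1]$ whose only discontinuities occur at the countably many dyadic rationals.

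For (i), a direct substitution yields $s_n^\infty=\rho((\omega_{r_n}^-)^\infty)$ and $s_nt_n^\infty=\rho(\omega_{r_n}^-(\omega_{r_n}^+)^\infty)$, so the endpoints of $\De(r_1,\ldots,r_n)$ are the $\rho$-images of the endpoints of $\De(r_n)\subset[010^\infty,01^\infty]$; order-preservation of $\rho$ together with $\rho(010^\infty)=s_{n-1}t_{n-1}s_{n-1}^\infty$ and $\rho(01^\infty)=s_{n-1}t_{n-1}^\infty$ then gives (i).

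For (ii), let $k$ be the smallest index at which $(r_1,\ldots,r_n)\neq(r'_1,\ldots,r'_n)$. Iterated application of (i) gives $\De(r_1,\ldots,r_n)\subset\De(r_1,\ldots,r_k)$ and $\De(r'_1,\ldots,r'_n)\subset\De(r_1,\ldots,r_{k-1},r'_k)$, so it suffices to verify disjointness of these two level-$k$ intervals. Applying $\rho_{r_1}\cdots\rho_{r_{k-1}}$ to endpoints and using order-preservation, this further reduces to the base case $\De(r)\cap\De(r')=\varnothing$ for distinct $r,r'\in\mathbb Q\cap(0,1)$—the standard Sturmian disjointness already implicit in the decomposition~\eqref{eq:S} from \cite{SSC}.

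For (iii), by (i) the map $\tilde\rho$ sends $[1/4,1/2]$ into $\wt\De(r_1,\ldots,r_{n-1})$ and each $\De(r)$ into $\De(r_1,\ldots,r_{n-1},r)$. Now $\tilde\rho([1/4,1/2])$ covers $\wt\De(r_1,\ldots,r_{n-1})$ except for the countably many open gaps $(\tilde\rho(d^-),\tilde\rho(d^+))$ at dyadic rationals $d\in(1/4,1/2)$; but each such $d$ lies in the interior of some $\De(r)$ (since $\mathcal S$ consists of non-eventually-periodic Sturmian sequences by the characterisation in \cite{SSC}, and so contains no dyadic rational), whence by convexity the corresponding gap is absorbed into $\De(r_1,\ldots,r_{n-1},r)$. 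Tracing preimages, any $x\in\mathcal S_n(r_1,\ldots,r_{n-1})$ must be of the form $\tilde\rho(y)$ with $y\in\mathcal S$, and so
\[
\mathcal S_n(r_1,\ldots,r_{n-1})\subset \tilde\rho(\mathcal S)\cup E
\]
for some countable set $E$ of boundary points. Since $\tilde\rho$ is Lipschitz and $\dim_H\mathcal S=0$ by \cite{SSC}, this gives $\dim_H\mathcal S_n(r_1,\ldots,r_{n-1})=0$. The main obstacle is precisely the dyadic-rational assertion in (iii): without knowing that every gap of $\tilde\rho$ is swallowed by some $\De(r_1,\ldots,r_n)$, the gaps carry full Lebesgue measure inside $\wt\De(r_1,\ldots,r_{n-1})$ and would otherwise wreck the dimension estimate; everything else is bookkeeping around the order-preservation of $\rho$.
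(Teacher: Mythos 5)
Your argument follows the same skeleton as the paper's: \eqref{eq:sub} and the inductive step of \eqref{eq:disjoint} both come down to the fact that $\rho=\rho_{r_1}\cdots\rho_{r_{n-1}}$ sends $0,1$ to words beginning with $0,1$ and hence strictly preserves lexicographic order, so everything reduces to level one; and \eqref{eq:zerohd} comes down to identifying $\mathcal S_n(r_1,\dots,r_{n-1})$ with (essentially) the $\rho$-image of $\mathcal S$. Two points, though. First, for the base case of \eqref{eq:disjoint} you simply cite disjointness of the $\De(r)$ as ``implicit in'' \eqref{eq:S}; but \eqref{eq:S} only records that the complement has zero dimension, and the paper actually \emph{proves} the $n=1$ disjointness, by computing $|\De(p/q)|=\tfrac1{4(2^q-1)}$ and using the Lambert-series identity \eqref{eq:euler} to show that the lengths sum to exactly $\tfrac14$, the length of $(1/4,1/2)$, so that overlap is impossible. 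As written your base case is a citation to \cite{SSC}, not a proof; make that explicit or supply the totient argument.

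Second, $\tilde\rho$ is \emph{not} Lipschitz: it is discontinuous at every dyadic rational (you say so in the same sentence), and even on the non-dyadic points its local distortion is unbounded near low-denominator dyadics --- e.g.\ points just below and just above $3/8$ share only two binary digits, so their images differ by roughly $2^{-2Q_{n-1}}/4$ no matter how close the points are. So ``Lipschitz image of a zero-dimensional set'' is not a legitimate step as stated. The repair is easy and is exactly what the paper does: $\tilde\rho$ maps each dyadic cylinder of generation $k$ into a dyadic cylinder of generation $kQ_{n-1}$, so a cover of $\mathcal S$ by $N(k)$ such cylinders, with $N(k)$ polynomial in $k$ by the balanced-word count of \cite{Mig}, pushes forward to a cover of $\tilde\rho(\mathcal S)$ of the same cardinality and smaller mesh; since Hausdorff dimension can be computed with dyadic covers, $\dim_H\tilde\rho(\mathcal S)=0$. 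On the credit side, your observation that every jump of $\tilde\rho$ occurs at a dyadic rational lying in the \emph{interior} of some $\De(r)$, so that the corresponding gap is swallowed by $\De(r_1,\dots,r_{n-1},r)$, is correct and is a point the paper glosses over; you are right that without it the gaps, which carry full measure in $\wt\De(r_1,\dots,r_{n-1})$, would not be accounted for.
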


\begin{proof} Note that since $\rho_r(0)$ is the largest among the cyclic permutations of the same word which begin with 0, $s_n$ always begins with $s_{n-1}t_{n-1}$ and, similarly, $t_n$ always begins with $t_{n-1}s_{n-1}$. Let $s_n=s_{n-1}t_{n-1}w$ and $t_n=t_{n-1}s_{n-1}w$, where $w$ is constructed from the blocks $s_{n-1}, t_{n-1}$. Thus, we have
\begin{align*}
s_{n-1}t_{n-1}w s_{n-1}t_{n-1}w\dots &\succ s_{n-1}t_{n-1}s_{n-1}^\infty,\\
s_{n-1}t_{n-1}(t_{n-1}s_{n-1}w)^\infty & \prec s_{n-1}t_{n-1}^\infty,
\end{align*}
which proves (\ref{eq:sub}).

Let us now prove (\ref{eq:disjoint}). Consider first the case $n=1$. Since $s=01w, t=10w$ for some $w$, the length of $[s^\infty, ts^\infty]$ is $1/4$, whence
\begin{align*}
st^\infty-s^\infty &= 2^{-q}(t^\infty-s^\infty),\\
t^\infty-s^\infty &=\frac14+st^\infty-s^\infty,
\end{align*}
whence the length of $[s^\infty, t^\infty]$ is $\frac{2^q}{4(2^q-1)}$. Hence the length of $\De(p/q)$ is $\frac1{4(2^q-1)}$. Therefore,

\begin{equation}\label{eq:14}
\sum_{\substack{
   q\ge 2 \\
   1\le p<q \\
   \text{g.c.d.}(p,q)=1
  }}
 |\De(p/q)|=
\sum_{q=2}^\infty \frac{\varphi(q)}{4(2^q-1)},
\end{equation}
where $\varphi$ is Euler's totient function. As is well known,
\begin{equation}\label{eq:euler}
\sum_{q=1}^\infty \frac{\varphi(q)x^q}{1-x^q}=\frac x{(1-x)^2},\quad |x|<1
\end{equation}
(see \cite[Theorem~309]{HR}). Substituting $x=1/2$ into (\ref{eq:euler}), we infer that the sum in (\ref{eq:14}) equals $1/4$, which means that the $\De(p/q)$ do not overlap for different pairs $(p,q)$.

Now suppose $n\ge2$. It suffices to consider $r_1'=r_1,\dots, r_{n-1}'=r_{n-1}$ and $r_n'>r_n$. We need to show that
\begin{equation}\label{eq:sntn}
s_n t_n^\infty \prec (s_n')^\infty.
\end{equation}
We have $s_n=S(s_{n-1},t_{n-1}), s_n'=S'(s_{n-1},t_{n-1})$ and $t_n=T(s_{n-1},t_{n-1})$. By the above, $S(0,1)T(0,1)^\infty\prec (S'(0,1))^\infty$, which means that the left-hand side has 0 and the right-hand side 1 at the first symbol where they disagree. Since $s_{n-1}$ begins with 0 and $t_{n-1}$ with 1, this implies (\ref{eq:sntn}).

To prove (\ref{eq:zerohd}), note that for $n=1$ the set $\mathcal S$ consists precisely of the points whose dyadic expansion is of the form $01w$, where $w$ is a characteristic word for some irrational $\gamma\in(0,1/2)$ -- see \cite[Section~2]{SSC}. Since for any characteristic word $w$ its prefix $w_1\dots w_N$ is balanced, the Hausdorff dimension of $\mathcal S$ is zero, in view of the fact that the number of balanced words of length~$N$ grows polynomially with $N$ -- see, e.g., \cite[Corollary~18]{Mig}.

For $n\ge2$ the set $\mathcal S_n$ is the set of points whose dyadic expansion is of the form $s_{n-1}t_{n-1}w$, where $w$ is a characteristic word with 0 replaced with $s_{n-1}$ and 1 with $t_{n-1}$. Clearly, the set which consists of such words has polynomial growth as well, whence (\ref{eq:zerohd}) follows.
\end{proof}

Put $\mathsf S_1=\mathcal S$ and
\[
\mathsf S_n=\bigcup_{\substack{
   r_1,\dots, r_{n-1} \\
   r_i\in\mathbb Q\cap (0,1),\ 1\le i\le n
  }}\mathcal S_n(r_1,\dots, r_{n-1}).
\]

\begin{prop}
The set $\mathsf S:=\bigcup_{n\ge1}\mathsf S_n$ has zero Hausdorff dimension.
\end{prop}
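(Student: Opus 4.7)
The plan is to reduce the claim to the countable stability of Hausdorff dimension. All of the substantive work is already contained in the preceding lemma: specifically, equation~\eqref{eq:zerohd} gives $\dim_H\mathcal S_n(r_1,\dots,r_{n-1})=0$ for every fixed tuple $(r_1,\dots,r_{n-1})\in(\mathbb Q\cap(0,1))^{n-1}$, and the base case $\mathsf S_1=\mathcal S$ is known to be of zero Hausdorff dimension as well (the $n=1$ instance of the same lemma, originally in \cite{SSC}).

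First I would record that $\mathbb Q\cap(0,1)$ is countable, and hence so is every finite Cartesian power $(\mathbb Q\cap(0,1))^{n-1}$. Consequently each set
\[
\mathsf S_n=\bigcup_{(r_1,\dots,r_{n-1})}\mathcal S_n(r_1,\dots,r_{n-1})
\]
is a \emph{countable} union of sets of Hausdorff dimension zero, and likewise $\mathsf S=\bigcup_{n\ge1}\mathsf S_n$ is a countable union of the $\mathsf S_n$.

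Then I would invoke the standard countable stability of Hausdorff dimension, $\dim_H\bigl(\bigcup_i A_i\bigr)=\sup_i\dim_H(A_i)$ valid for any countable family $\{A_i\}$, applied first to each $\mathsf S_n$ to get $\dim_H\mathsf S_n=0$ and then a second time to the union $\bigcup_{n\ge1}\mathsf S_n$. This immediately yields $\dim_H\mathsf S=0$. There is no genuine obstacle beyond the bookkeeping above: the actual combinatorial content (polynomial growth of balanced words, and of their substitution images obtained by replacing $0\mapsto s_{n-1}$, $1\mapsto t_{n-1}$) was already consumed in the proof of \eqref{eq:zerohd}, so what remains here is entirely formal.
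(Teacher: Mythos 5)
Your argument is correct and is exactly the paper's proof: each $\mathcal S_n(r_1,\dots,r_{n-1})$ has Hausdorff dimension zero by \eqref{eq:zerohd}, and the countable stability of Hausdorff dimension, $\dim_H\bigcup_j E_j=\sup_j\dim_H E_j$, finishes the claim since the index sets are countable. No issues.
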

\begin{proof}Follows from (\ref{eq:zerohd}) and the fact that $\dim_H \bigcup_{j=1}^\infty E_j=\sup\limits_{j\in\mathbb N} \dim_H E_j$.
\end{proof}

The following key result is a generalization of \cite[Lemma~12]{GS}, where it was proved for the case $\bm r=(1/2,1/2,\dots)$. Note that our proof for the general case is completely different from that for the special case in question.

\begin{lemma}\label{lem:countable}
The set $\mathcal J(s_n^\infty, t_n^\infty)$ is infinite countable for any $r_1,\dots,r_n$.
\end{lemma}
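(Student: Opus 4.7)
The plan is to prove infiniteness directly via a family of preimages, and countability by induction on $n$ using a renormalization that peels off the outer substitution $\rho_{r_1}$.

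For infiniteness, observe first that $(s_n, t_n)$ is itself an extremal pair, by induction from Proposition~\ref{prop:ext} applied to the pairs $(\omega_{r_k}^-, \omega_{r_k}^+)$, each of which is extremal by construction. Extremality implies $\sigma^j s_n^\infty \preceq s_n^\infty$ or $\sigma^j s_n^\infty \succeq t_n^\infty$ for every $j \ge 0$, so in particular $s_n^\infty \in \mathcal J(s_n^\infty, t_n^\infty)$. For each $k \ge 0$, the preimage $x_k = 2^{-k} s_n^\infty$ (symbolically $0^k s_n^\infty$) satisfies $T^j x_k = 2^{j-k} s_n^\infty \le s_n^\infty$ for $0 \le j \le k$ and $T^j x_k = \sigma^{j-k} s_n^\infty$ for $j > k$; in both cases $T^j x_k \notin (s_n^\infty, t_n^\infty)$. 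The $x_k$ are distinct, giving an infinite countable subset of $\mathcal J(s_n^\infty, t_n^\infty)$.

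For countability, proceed by induction on $n$. The base case $n=1$ is the claim that $\mathcal J(\omega_{r_1}^{-\infty}, \omega_{r_1}^{+\infty})$ is countable; by the characterization from \cite[Corollary~3.6]{SSC} invoked in the proof of Proposition~\ref{prop:phi}, every $x \in \mathcal J$ is either in $\{0,1\}$ or has a forward orbit eventually contained in the period-$q_1$ orbit of $s_1^\infty$, and the full set of $T$-preimages of a finite set is countable.

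For the inductive step, write $(s_n, t_n) = (\rho_{r_1}(s'), \rho_{r_1}(t'))$, where $(s', t')$ is the $(n{-}1)$-level pair constructed from the shifted parameter sequence $(r_2, \ldots, r_n)$. Extend $\rho_{r_1}$ to one-sided sequences by concatenation; then $\rho_{r_1}((s')^\infty) = s_n^\infty$ and $\rho_{r_1}((t')^\infty) = t_n^\infty$. The key claim is that any admissible symbolic sequence $w$ for the hole $(s_n^\infty, t_n^\infty)$ can, after deletion of a prefix of length at most $q_1$, be uniquely partitioned into consecutive blocks from $\{\omega_{r_1}^-, \omega_{r_1}^+\}$, and the block-recoded sequence $\tilde w$ is admissible for the hole $((s')^\infty, (t')^\infty)$. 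Granted this, the map $w \mapsto (\text{prefix}, \tilde w)$ is injective and takes values in the set $\{0,1\}^{\le q_1} \times \pi^{-1}\mathcal J((s')^\infty, (t')^\infty)$, which is countable by the inductive hypothesis, so $\mathcal J(s_n^\infty, t_n^\infty)$ is countable.

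The main technical obstacle is the block-decomposition claim. I expect to establish it via the extremal inequalities \eqref{eq:extremal} for $(s_n, t_n)$ together with the cyclic balancedness of $\omega_{r_1}^\pm$, which differ only in their first two symbols ($01$ versus $10$) and share a common length-$(q_1{-}2)$ tail. The extremal constraint $\sigma^k w \preceq s_n^\infty$ or $\sigma^k w \succeq t_n^\infty$ at each position $k$ should force local alignment with the $\rho_{r_1}$-substitution grid, because any misaligned occurrence of $\omega_{r_1}^\pm$ would produce a factor violating one of these inequalities within the next $Q_n$ symbols; once a single block boundary is identified (which must happen within the first $q_1$ positions by a case analysis on the initial symbols of $w$), the partition then propagates uniquely forward. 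The admissibility of $\tilde w$ follows because $\rho_{r_1}$ is order-preserving when applied at block boundaries.
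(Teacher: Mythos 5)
Your infiniteness argument is correct and self-contained: extremality of $(s_n,t_n)$ (via Proposition~\ref{prop:ext} and induction) puts $s_n^\infty$ in the survivor set, and the preimages $0^ks_n^\infty$ give infinitely many points. Your countability plan --- strip the outermost substitution $\rho_{r_1}$ and induct on $n$ --- is also a genuinely different route from the paper's, which instead passes from level $k$ to level $k+1$ by showing that the induced map $T^{Q_k}$ permutes the $q_{k+1}$ subintervals cut out by the orbit of $s_{k+1}^\infty$ like a rotation, so that every orbit eventually enters $[s_{k+1}^\infty,t_{k+1}^\infty]$. Your recoding step itself (order-preservation of $\rho_{r_1}$ at block boundaries, injectivity of $w\mapsto(\text{prefix},\tilde w)$) is sound.

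The gap is in the key block-decomposition claim, which is false as stated. It is not true that every survivor of the hole $(s_n^\infty,t_n^\infty)$ parses into blocks $\omega_{r_1}^\pm$ after deleting a prefix of length at most $q_1$: the sequences $0^\infty$, $1^\infty$ and $0^ms_n^\infty$ with $m>q_1$ all lie in $\pi^{-1}\mathcal J(s_n^\infty,t_n^\infty)$ and admit no such parsing, since each block $\omega_{r_1}^\pm$ contains both letters. Consequently the proposed mechanism --- extremal inequalities forcing a block boundary within the first $q_1$ positions --- cannot work: a survivor may spend arbitrarily long, or forever, outside $[s_1^\infty,t_1^\infty]$, and no block structure is forced there. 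The correct dichotomy is dynamical rather than bounded-prefix combinatorial: either the orbit of $w$ never enters $[s_1^\infty,t_1^\infty]$, in which case $w\in\pi^{-1}\mathcal J(s_1^\infty,t_1^\infty)$ and the base case applies; or there is a first (unbounded) time $K$ with $\sigma^Kw\in[s_1^\infty,t_1^\infty]$, and then the survivor condition forces $\sigma^Kw\in[s_1^\infty,s_n^\infty]\cup[t_n^\infty,t_1^\infty]\subset I_L\cup I_R$ in the notation of (\ref{eq:ILIR}). Since every point of $I_L$ (resp.\ $I_R$) begins with the block $s_1$ (resp.\ $t_1$) and its shift by $q_1$ lands back in $[s_1^\infty,t_1^\infty]\setminus(s_n^\infty,t_n^\infty)$, the parsing of $\sigma^Kw$ then propagates forever, and your recoding applies to this tail. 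Allowing the deleted prefix to have arbitrary finite length still yields a countable union of countable sets, so the architecture is salvageable; but as written the proof omits all survivors that are not quickly block-aligned, which is exactly the set whose countability is at issue.
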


\begin{proof}Let us first recall a well known property of cyclically balanced words. Namely, let $\{w_0,\dots,w_{q-1}\}$ be the set of cyclically balanced words of length~$q$ with $p$ 1s with $w_0\prec\dots \prec w_{q-1}$. Then there exists $p'$ such that

\begin{equation}\label{eq:pprime}
\sigma (w_j^\infty) = w_{j+p'\bmod q}^\infty,\quad 0\le j\le q-1
\end{equation}
-- see, e.g., \cite{GT}. In particular, $\sigma(w_{j-p'-1}^\infty)=w_{q-1}^\infty$ and $\sigma(w_{j-p'}^\infty)=w_0^\infty$.

We now prove the claim by induction. For $n=1$ this is \cite[Corollary~3.6]{SSC}; assume the claim to hold for all $k\le n$ and prove it for $n=k+1$. Note first that it suffices to show that for all $x\in (s_k^\infty, s_{k+1}^\infty)\cup (t_{k+1}^\infty, t_k^\infty)$, except a countable set, we have $x\notin\mathcal J(s_{k+1}^\infty, t_{k+1}^\infty)$.

Put within this proof $T_k=T^{Q_k}, \sigma_k=\sigma^{Q_k}$ and consider the set $\{\sigma_k^j(s_{k+1}^\infty) : j\ge0\}$ (whose cardinality is clearly $q_{k+1}$) and label its elements $x_0\prec\dots \prec x_{q-1}$, where $q:=q_{k+1}$. Suppose $x_{q-p'-1}=s_{k+1}^\infty$ and $x_{q-p'}=t_{k+1}^\infty$.

Put $J_i=[x_i, x_{i+1}]$ for $i=0,1,\dots, q-2$. Then (\ref{eq:pprime}) implies that for any $i\neq q-p'-1$
\[
T_k(J_i)=J_{i+p'\bmod q}.
\]
and in particular for all $i\ne q-p'-1$ there exists $j\le q-1$ such that $T_k^j(J_i)=J_{q-p'-1}$. Consequently, if $x\in [x_0,x_{q-1}]$ there exists $v\ge 0$ such that
\[
T_k^v(x)\in [x_{q-p'-1},x_{q-p'}]=[s_{k+1}^\infty, t_{k+1}^\infty].
\]
Thus, there can be only countably many $x\in(x_0, x_{q-1})$ whose trajectories do not fall into the hole $(s_{k+1}^\infty, t_{k+1}^\infty)$.

It suffices to consider $x\in (s_k^\infty, x_0)$ (the case $x\in(x_{q-1}, s_{k+1}^\infty)$ is similar). Clearly, $T_k|_{[s_k^\infty,x_0]}$ is a homeomorphism on its image with $T_k(s_k^\infty)=s_k^\infty$ and $T_k(x_0)=x_{p'}\in [x_0, x_{q-1}]$. Hence for any $x\in [s_k^\infty,x_0]$ there exists $j$ such that $T^j(x)\in [x_0, x_{q-1}]$, and we are done.
\end{proof}

Now let $\bm r=(r_1,r_2,\dots)\in(\mathbb Q\cap(0,1))^\BN$ and put
\begin{align*}
\mathfrak s(\bm r)&=\lim_{n\to\infty}\rho_{r_1}\dots\rho_{r_n}(0),\\
\mathfrak t(\bm r)&=\lim_{n\to\infty}\rho_{r_1}\dots\rho_{r_n}(1).
\end{align*}

\begin{thm}\label{thm:chi}
Any $a\in(1/4,1/2)$ falls into one of the following four categories:

\smallskip\noindent
(i) Let $s_n=\rho_{r_1}\dots\rho_{r_n}(0),\ t_n=\rho_{r_1}\dots\rho_{r_n}(1)$. We have
\begin{equation}\label{eq:thm}
\chi(a)\equiv t_ns_n^\infty\ \text{for all}\ a\in[s_n^\infty, s_nt_ns_n^\infty].
\end{equation}
Furthermore, $\chi(a)<t_ns_n^\infty$ for any $a<s_n^\infty$ and $\chi(a)>t_ns_n^\infty$ for any $a>s_nt_ns_n^\infty$, so this is an actual plateau of the function $\chi$.

\smallskip\noindent (ii) If $a\in\mathcal S$, then $\chi(a)=a+1/4$.

\smallskip\noindent (iii) If $a\in\mathcal S_n(r_1,\dots,r_{n-1})$ for $n\ge2$, then $\chi(a)=a+(1-2^{-Q_{n-1}})(t_{n-1}-s_{n-1})$.

\smallskip\noindent (iv) If there exists $(r_1,r_2,\dots)$ such that $a\in\wt\De(r_1,\dots,r_n)$ for all $n\ge1$, then $a=\mathfrak s(\bm r)$, and $\chi(a)=\mathfrak t(\bm r)$.
\end{thm}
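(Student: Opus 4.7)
The plan is to treat the four cases as a partition of $(1/4,1/2)$---a partition that follows from iterating the trichotomy ``$a\in\Delta(r_1,\dots,r_n)\setminus\widetilde\Delta(r_1,\dots,r_n)$ (case (i))\,/\,$a\in\mathcal S_n(r_1,\dots,r_{n-1})$ (case (ii) or (iii))\,/\,$a\in\widetilde\Delta(r_1,\dots,r_n)$ (continue)'' supplied by the preceding subdivision lemma---and to prove the $\chi$-formula in each case. Case (i), the plateau formula, is the technical heart; cases (ii)--(iv) follow from it by monotonicity and a limiting argument. For the lower bound in case (i), the pair $(s_n,t_n)$ is extremal (by iterating Proposition~\ref{prop:ext} from the base extremal pair $(\omega_{r_1}^-,\omega_{r_1}^+)$) and $t_n$ is a cyclic permutation of $s_n$ (inherited from $\omega_r^\pm$ and preserved under substitution). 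Hence Proposition~\ref{prop-entropy} applies with $(s,t)=(s_n,t_n)$: for any $b\in(s_nt_n^\infty,t_ns_n^\infty)$ we obtain a $\sigma$-invariant subshift $W_m$ of positive topological entropy disjoint from $(s_n^\infty,b)$, so $\pi(W_m)\subset\mathcal J(s_n^\infty,b)$ is uncountable. Monotonicity of $\mathcal J(\cdot,\cdot)$ in $a$ (non-decreasing) and $b$ (non-increasing) extends this to every $a\in[s_n^\infty,s_nt_ns_n^\infty]$ and every $b<t_ns_n^\infty$, yielding $\chi(a)\ge t_ns_n^\infty$ on the plateau.

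The matching upper bound---where the main obstacle lies---requires $\mathcal J(s_nt_ns_n^\infty,b)$ to be countable for every $b>t_ns_n^\infty$. This does \emph{not} reduce to Lemma~\ref{lem:countable} by any simple inclusion, because the hole $(s_nt_ns_n^\infty,b)$ sits \emph{inside} the hole $(s_n^\infty,t_n^\infty)$ and the $\mathcal J$-containment runs the wrong way. Instead, I would decompose $\mathcal J(s_nt_ns_n^\infty,b)=\mathcal J(s_n^\infty,t_n^\infty)\cup E$, where $E$ consists of points whose $T$-orbit visits one of the slivers $(s_n^\infty,s_nt_ns_n^\infty]\cup[b,t_n^\infty)$ (the right sliver disappears when $b\ge t_n^\infty$), and argue that $E$ is countable by renormalizing under $T^{Q_n}$: on each sliver, $T^{Q_n}$ conjugates to the cyclic-rotation action on $\{\sigma^{jQ_n}s_n^\infty\}$ used in the proof of Lemma~\ref{lem:countable}, forcing any orbit confined to the slivers to be eventually pre-periodic and so from a countable family. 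Plateau sharpness, i.e.\ $\chi(a)\ne t_ns_n^\infty$ for $a$ just outside $[s_n^\infty,s_nt_ns_n^\infty]$, then follows from a direct lexicographic comparison of the substitutive words at the next level, showing $t_{n+1}s_{n+1}^\infty\succ t_ns_n^\infty$ for any subsequent rotation number $r_{n+1}$ and, symmetrically, the corresponding strict inequality on the left.

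Cases (ii)--(iv) follow by limits of case (i). For case (iv), $a=\mathfrak s(\bm r)\in\widetilde\Delta(r_1,\dots,r_n)$ for every $n$, so monotonicity gives $\chi(a)\ge\chi(s_nt_ns_n^\infty)=t_ns_n^\infty\to\mathfrak t(\bm r)$, while the matching upper bound is supplied by the level-$(n+1)$ plateaus inside $\widetilde\Delta(r_1,\dots,r_n)$, whose $\chi$-values also converge to $\mathfrak t(\bm r)$. For cases (ii) and (iii), $a\in\mathcal S_n(r_1,\dots,r_{n-1})$ is a limit of plateau left endpoints $s_n^\infty$ along a sequence $r_n=p_n/q_n\to\gamma$, where $\gamma$ is the irrational parameter encoding the characteristic-word tail of $a$; passing $\chi=t_ns_n^\infty$ to this limit, and using the explicit identity $t_n^\infty-s_n^\infty=2^{Q_n}/\bigl(4(2^{Q_n}-1)\bigr)$ from the subdivision lemma through one level of renormalization, yields the closed-form $\chi(a)-a=(1-2^{-Q_{n-1}})(t_{n-1}-s_{n-1})$, which reduces to $1/4$ in the top-level case $n=1$ of (ii).
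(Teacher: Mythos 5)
Your proposal follows essentially the same route as the paper: the lower bound on the plateau via extremality of $(s_n,t_n)$ (iterating Proposition~\ref{prop:ext}) together with Proposition~\ref{prop-entropy}; the upper bound via the observation that $\mathcal J(s_nt_ns_n^\infty,b)$ differs from $\mathcal J(s_n^\infty,t_n^\infty)$ only by orbits visiting the two slivers, reducing to Lemma~\ref{lem:countable}; and squeezing between neighbouring plateaus for (ii)--(iv). One correction to the sliver step: the mechanism is not a conjugacy to a cyclic rotation on $\{\sigma^{jQ_n}s_n^\infty\}$ --- that set is a single point, since $s_n^\infty$ is $\sigma^{Q_n}$-fixed --- but rather that $T^{Q_n}$ restricted to $(s_n^\infty,s_nt_ns_n^\infty]$ is a single expanding branch with repelling fixed point $s_n^\infty$ mapping onto $(s_n^\infty,t_ns_n^\infty]$, so every orbit is pushed into the closure of the hole and the survivors are exactly the (countably many) preimages of the endpoints; the cyclic-rotation structure belongs to the proof of Lemma~\ref{lem:countable} at the next renormalization level, not to the slivers.
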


\begin{proof}(i) Let us prove first (\ref{eq:thm}). Since $\chi$ is non-decreasing, it suffices to show that
\begin{equation}\label{eq:ineq1}
t_ns_n^\infty \preceq \chi(s_n^\infty)
\end{equation}
and
\begin{equation}\label{eq:ineq2}
t_ns_n^\infty \succeq \chi(s_nt_ns_n^\infty).
\end{equation}
Note that by Proposition~\ref{prop:ext} and induction on $n$, the pair $(s_n,t_n)$ is extremal. Hence by Proposition~\ref{prop-entropy},
\[
\dim_H\mathcal J(s_n^\infty,w)>0,\quad s_n^\infty\prec w\prec t_ns_n^\infty,
\]
which proves (\ref{eq:ineq1}). To prove (\ref{eq:ineq2}) note first that for any $w\in(s_n^\infty, s_nt_ns_n^\infty)$ there exists $j\ge0$ such that $\sigma^{jQ_n}(w)\in[s_n^\infty,t_ns_n^\infty]$ -- see Figure~\ref{fig:uninfty}.

\begin{figure}[t]
\centering \unitlength=1.3mm
\begin{picture}(40,70)(0,0)

\thinlines

\path(-10,5)(50,5)(50,65)(-10,65)(-10,5)

\dottedline(-10,5)(50,65)
\dottedline(10,5)(10,65)

\put(-13,2){$s_n^\infty$}
\put(9,2){$s_nt_n^\infty$}
\put(48,2){$t_n^\infty$}

\put(-3,2){$s_nt_ns_n^\infty$}

{\tiny \put(6,6){$\wt\De$}}

{\tiny \put(-6,6){$\De\setminus\wt\De$}}

{\Large \put(2.9,3.8){$\cdot$}}

{\Large \put(9.4,3.8){$\cdot$}}

\thicklines

\path(-10,5)(10,65)
\path(30,5)(50,65)

\thinlines

\dottedline(30,5)(30,45)
\dottedline(30,45)(3.5,45)
\dottedline(3.5,45)(3.5,5)

\put(27,2){$t_ns_n^\infty$}

\end{picture}

\caption{\sf Part of the map $T^{Q_n}$}
    \label{fig:uninfty}
  \end{figure}
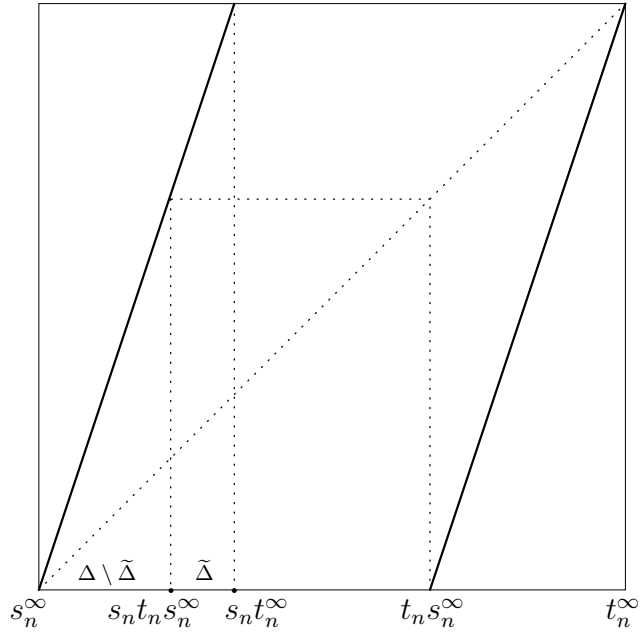

Hence $\mathcal J(s_nt_ns_n^\infty,t_ns_n^\infty)\setminus \mathcal J(s_n^\infty, t_ns_n^\infty)$ is countable. Similarly, for any $w\in(t_ns_n^\infty, t_n^\infty)$ there exists $k\ge0$ such that $\sigma^{kQ_n}(w)\in(s_n^\infty,t_ns_n^\infty)$. Therefore, $\mathcal J(s_nt_ns_n^\infty,t_ns_n^\infty)\setminus \mathcal J(s_n^\infty, t_n^\infty)$ is at most countable. Now (\ref{eq:ineq2}) follows from Lemma~\ref{lem:countable}.

To prove the second part of (i), assume first $a<s_n^\infty$. In view of (\ref{eq:disjoint}) and (\ref{eq:zerohd}), one can always find $r_n'$ such that $\De(r_1,\dots, r_{n-1}, r_n')$ lies between $a$ and $s_n^\infty$. Hence $\chi(a)\le \chi(s_n't_n'(s'_n)^\infty)=t_n'(s_n')^\infty<t_ns_n^\infty= \chi(s_n^\infty)$. Similarly, $\chi(a)>t_ns_n^\infty$ for any $a>s_nt_ns_n^\infty$.

\medskip\noindent (ii) See Proposition~\ref{prop:equal} below.

\medskip\noindent (iii) We have
\begin{align*}
\chi(a) &= a+t_ns_n^\infty-s_n^\infty \\
&= a+t_n-s_n \\
&=a+(1-2^{-Q_{n-1}})(t_{n-1}-s_{n-1}).
\end{align*}

\medskip\noindent (iv) For each $n\ge1$ we have $s_n^\infty<a<s_nt_n^\infty$, whence $\chi(s_n^\infty)\le \chi(a)\le\chi(s_nt_n^\infty)$. It suffices to recall that $\chi(s_n^\infty)=t_ns_n^\infty$ and $\chi(s_nt_n^\infty)=t_ns_nt_n^\infty$ and pass to the limit as $n\to\infty$.

\end{proof}

This proves Theorem~\ref{thm:main} stated in the Introduction. 

\begin{cor}The function $\chi$ is piecewise constant on a subset of $(1/4,1/2)$ whose complement $\mathsf S\cup\{\mathfrak s(\bm r) : \bm r\in(\mathbb Q\cap(0,1))^\BN\}$ is nowhere dense.
\end{cor}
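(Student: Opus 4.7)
The plan is to unpack Theorem~\ref{thm:chi} into a partition of $(1/4,1/2)$ and then verify the topological claim by a Lebesgue-measure argument. Among the four cases of the theorem, only case (i) produces a plateau of $\chi$, so $\chi$ is piecewise constant on
\[
A:=\bigcup_{n\ge 1}\ \bigcup_{\bm r\in(\mathbb Q\cap(0,1))^n}\sq{s_n^\infty,\ s_nt_ns_n^\infty},
\]
and cases (ii)--(iv) together identify $(1/4,1/2)\setminus A$ with exactly $\mathsf S\cup\{\mathfrak s(\bm r):\bm r\in(\mathbb Q\cap(0,1))^\BN\}$. Only the nowhere-dense statement then needs to be proved.

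To prove it, I would show that the open set $U:=\bigcup_{n,\bm r}(s_n^\infty,s_nt_ns_n^\infty)$ of open plateau interiors has full Lebesgue measure $1/4$ in $(1/4,1/2)$. Its components are pairwise disjoint by (\ref{eq:disjoint}), together with the fact that for each $\bm r$ the level-$n$ plateau meets $\wt\De(r_1,\dots,r_n)$ only at the single point $s_nt_ns_n^\infty$. Once $|U|=1/4$ is established, $(1/4,1/2)\setminus U$ is a closed subset of Lebesgue measure zero, hence has empty interior; since $\mathsf S\cup\{\mathfrak s(\bm r)\}\subset(1/4,1/2)\setminus U$, its closure lies in the same null set and so is nowhere dense, as required.

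To estimate $|U|$, I would introduce $L_n:=\sum_{\bm r\in(\mathbb Q\cap(0,1))^n}|\wt\De(r_1,\dots,r_n)|$ with the convention $L_0:=1/4$. Pulling out the common prefixes $s_n$ and $s_nt_n$ in the endpoint expansions yields
\[
|\De(r_1,\dots,r_n)|=2^{-Q_n}\bb{t_n^\infty-s_n^\infty},\qquad|\wt\De(r_1,\dots,r_n)|=2^{-Q_n}|\De(r_1,\dots,r_n)|.
\]
Combining (\ref{eq:Sn}) with (\ref{eq:zerohd}) gives $\sum_{r_n}|\De(r_1,\dots,r_n)|=|\wt\De(r_1,\dots,r_{n-1})|$; summing over $(r_1,\dots,r_{n-1})$ and using $Q_n=q_1\cdots q_n\ge 2^n$ produces $L_n\le 2^{-2^n}L_{n-1}$, so $L_n\to 0$ super-exponentially. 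Telescoping, and noting that each level-$n$ plateau equals $\De(r_1,\dots,r_n)\setminus\wt\De(r_1,\dots,r_n)$ up to one endpoint, yields $|U|=L_0-\lim_n L_n=1/4$.

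The only delicate point is the telescoping book-keeping: one must keep track of how the plateaus of levels $1,\dots,n$, the $\wt\De$-layers at level $n$, and the countable set of shared endpoints fit together inside $(1/4,1/2)$. This is a direct consequence of the nesting (\ref{eq:sub}), the disjointness (\ref{eq:disjoint}), and the zero-measure property (\ref{eq:zerohd}), so once the nesting structure is set up the measure computation above is routine.
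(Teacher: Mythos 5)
Your proposal is correct, but it is more elaborate than what the paper has in mind: the corollary is stated without proof, as an immediate consequence of Theorem~\ref{thm:chi}. Your identification of the plateau set $A$ with case (i) and of its complement with $\mathsf S\cup\{\mathfrak s(\bm r)\}$ via cases (ii)--(iv) is exactly the intended reading (modulo the harmless fact that a plateau endpoint $s_nt_ns_n^\infty$ also lies in $\mathcal S_{n+1}(r_1,\dots,r_n)$, so the two sets overlap in a countable set). Where you diverge is in establishing nowhere-density: the paper's implicit route is simply that the relatively closed set $(1/4,1/2)\setminus U$ is the union of $\mathsf S$ (Hausdorff dimension $0$ by (\ref{eq:zerohd})), the set $\{\mathfrak s(\bm r)\}$ (dimension $<1$, cf.\ Proposition~\ref{prop:attractor}), and countably many endpoints, hence is Lebesgue-null and so has empty interior. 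Your telescoping computation with $L_n=\sum_{\bm r}|\wt\De(r_1,\dots,r_n)|$ reproves this from scratch: the identities $|\De(r_1,\dots,r_n)|=2^{-Q_n}(t_n^\infty-s_n^\infty)$ and $|\wt\De(r_1,\dots,r_n)|=2^{-Q_n}|\De(r_1,\dots,r_n)|$ are both correct (strip the common prefixes $s_n$ and $s_nt_n$ respectively), the recursion $L_n\le 2^{-2^n}L_{n-1}$ follows from $Q_n\ge 2^n$, and $L_0=1/4$ is exactly (\ref{eq:14})--(\ref{eq:euler}). The payoff of your version is the quantitative statement $|U|=1/4$, i.e.\ that the plateaus of $\chi$ carry full Lebesgue measure, which is slightly stronger than what the corollary asserts; the cost is the book-keeping you acknowledge, all of which does go through using (\ref{eq:sub}), (\ref{eq:disjoint}) and (\ref{eq:zerohd}).
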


\begin{rmk}The set $\{\mathfrak s(\bm r) : \bm r\in(\mathbb Q\cap(0,1))^\BN\}$ is studied in detail in Section~\ref{subsec:ifs} below. In particular, we show that its Hausdorff dimension is approximately $0.4732$ -- see Proposition~\ref{prop:attractor} below.
\end{rmk}

\
\begin{cor}The boundary of $D_1$ is given by the formula
\[
\partial D_1=\{(a,\chi(a)) : a\in (1/4,1/2)\}\cup \{(1-\chi(1-b),b) : b\in (1/2,3/4)\},
\]
where the values of $\chi$ can be obtained explicitly via Theorem~\ref{thm:chi}.
\end{cor}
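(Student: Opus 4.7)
The plan is to analyze $\partial D_1$ via the set $U := \{(a,b) \in (1/4,1/2) \times (1/2, 3/4) : b < \chi(a)\}$ together with the reflection symmetry $\tau(a,b) := (1-b, 1-a)$. First I would reduce to identifying $\partial U$: by Theorem~\ref{thm:main}, $U \subseteq D_1$, while the definition of $\chi$ as the uncountability threshold gives $D_1 \setminus U \subseteq \{(a,\chi(a)) : a \in (1/4, 1/2)\}$. Since this graph lies in both $\overline U$ and $\overline{U^c}$ (approach from below and from above in the $b$-direction), we have $\overline{D_1} = \overline U$ and $\overline{D_1^c} = \overline{U^c}$, hence $\partial D_1 = \partial U$.

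The key symmetry input is that $h(x) = 1-x$ satisfies $T \circ h = h \circ T$ off the countable set of dyadic rationals, producing a cardinality-preserving bijection $\mathcal J(a,b) \leftrightarrow \mathcal J(1-b, 1-a)$. Consequently $\tau(D_1) = D_1$, and $U$ admits the dual description $U = \{(a,b) : a > g(b)\}$, where $g(b) := 1 - \chi(1-b)$.

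For the inclusion $\supseteq$: given $a^* \in (1/4, 1/2)$, approaches $(a^*, b)$ with $b < \chi(a^*)$ stay in $U \subseteq D_1$ while those with $b > \chi(a^*)$ exit $D_1$, so $(a^*, \chi(a^*)) \in \partial D_1$. Applying $\tau$ and reparameterizing via $b = 1 - a$ yields $(g(b), b) \in \partial D_1$ for all $b \in (1/2, 3/4)$.

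For the reverse inclusion $\subseteq$: since $\chi$ is non-decreasing, $\partial U$ is the graph of $\chi$ together with the vertical segments $\{(a^*, b) : b \in [L, R]\}$ appearing at each jump $L < R$ of $\chi$ at $a^*$. For any $(a^*, b^*) \in \partial U$ not on the graph, $b^*$ lies in such a jump range, and applying $\tau$ sends the corresponding vertical segment to the horizontal segment $\{(a, 1-a^*) : a \in [1-R, 1-L]\}$, which must therefore also lie in $\partial U$. Monotonicity of $\chi$ forces any horizontal piece of $\partial U$ at height $c$ to coincide with a plateau of $\chi$ at value $c$ on the corresponding $a$-interval (any strict deviation would conflict with the one-sided limits of $\chi$ being required to bracket $c$ at every point of the interval). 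Hence $\chi$ takes the constant value $1 - a^*$ on $[1-R, 1-L]$; translated back through $g$, this says $g(b) = a^*$ for every $b \in [L, R]$, so in particular $g(b^*) = a^*$ and $(a^*, b^*)$ is captured by the right-boundary parameterization. The principal subtlety I anticipate is justifying the ``horizontal segment forces a plateau'' step, but this follows cleanly from monotonicity combined with the characterization of $\partial U$ via one-sided limits.
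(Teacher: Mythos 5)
The paper states this corollary with no proof at all, treating it as immediate from Theorem~\ref{thm:chi} together with the ``reciprocity'' of $\chi$ under $(a,b)\mapsto(1-b,1-a)$ (which it invokes, e.g., in the proof of Proposition~\ref{prop:equal}); so there is no official argument to compare against, and your write-up supplies something the paper omits. Your reduction $\partial D_1=\partial U$, the use of the conjugacy $x\mapsto 1-x$ to get $\tau$-invariance, and the inclusion $\supseteq$ are all correct. The inclusion $\subseteq$ is also organized correctly: a non-graph boundary point lies on a vertical jump segment $\{a^*\}\times[L,R]$ with $L=\chi(a^*-)$, $R=\chi(a^*+)$, its $\tau$-image is a horizontal segment of $\partial U$, and monotonicity forces $\chi\equiv 1-a^*$ along it.

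The gap is at the endpoints of that horizontal segment. Monotonicity only forces $\chi\equiv 1-a^*$ on the \emph{open} interval $(1-R,1-L)$: at an endpoint, say $a=1-L$, membership in $\partial U$ only requires $\chi((1-L)-)\le 1-a^*\le\chi((1-L)+)$, which is perfectly compatible with $\chi(1-L)\ne 1-a^*$ when $\chi$ itself jumps at $1-L$. So your conclusion ``$g(b)=a^*$ for every $b\in[L,R]$'' is justified only for $b\in(L,R)$, and the corner points $(a^*,L)$ and $(a^*,R)$ (when they differ from $(a^*,\chi(a^*))$) are not covered; since $\tau(G)$ is not obviously closed, you cannot recover them by passing to limits either. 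These corners are not vacuous: $\chi$ jumps at $a^*=st^\infty$ from $\chi(a^*)=tst^\infty$ up to $R=st^\infty+\frac14$, and to place the top point $(a^*,R)$ on $\tau(G)$ one must check $\chi(1-R)=1-a^*$, i.e.\ $\chi((s')^\infty)=t'(s')^\infty$ for the reflected pair $s'=h(t)$, $t'=h(s)$ --- precisely the statement that the plateau in Theorem~\ref{thm:chi}(i) is a closed interval attaining its value at its left endpoint. So closing the gap needs a genuine extra input beyond monotonicity: either Theorem~\ref{thm:chi}(i) applied to the reflected parameters, or Proposition~\ref{prop:a-chia} combined with the fact that every nondegenerate plateau of $\chi$ is one of the $[s_n^\infty,s_nt_ns_n^\infty]$. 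With that added, your argument is complete. (A smaller quibble: $U=\{a>g(b)\}$ is not literally true --- $U$ and $\tau(U)$ may differ on graph points --- but this does not affect any boundary computation.)
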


\section{Critical holes}\label{sec-critical}

\subsection{Further properties of $\phi$ and $\chi$}

\begin{prop}\label{prop:equal}
We have $\phi(a)=\chi(a)$ for $a\in(1/4,1/2)$ if and only if $a\in\mathcal S$.
\end{prop}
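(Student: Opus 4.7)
The plan is to prove the equivalence in two directions, using the explicit formulas for $\phi$ (Proposition~\ref{prop:phi}) and $\chi$ (Theorem~\ref{thm:chi}), together with the fact that both functions are non-decreasing.

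\emph{The if direction.} Suppose $a\in\mathcal S$. The description of $\mathcal S$ in Section~2 of \cite{SSC} identifies the dyadic expansion of $a$ as $01u_\infty$, where $u_\infty$ is the characteristic word of some irrational $\ga\in(0,1/2)$; a direct computation gives the expansion of $a+1/4$ as $10u_\infty$. I would approximate $a$ from both sides by points in the intervals $\De(r_n^\pm)$ for rationals $r_n^\pm=p_n^\pm/q_n^\pm\to\ga$ chosen from the continued-fraction convergents of $\ga$. Odd- and even-indexed convergents lie on opposite sides of $\ga$, and by the standard-word recursion of the Remark in Section~2.3 the corresponding $(\om_{r_n^\pm}^-)^\infty$ land on opposite sides of $a$; the same recursion ensures that the length-$N$ prefixes of $\om_r^-$ and $\om_r^+$ stabilize to those of $01u_\infty$ and $10u_\infty$ once $q$ is large enough. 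Proposition~\ref{prop:phi} gives $\phi\equiv(\om_{r_n^\pm}^+)^\infty$ on $\De(r_n^\pm)$, and Theorem~\ref{thm:chi}(i) gives $\chi\equiv\om_{r_n^\pm}^+(\om_{r_n^\pm}^-)^\infty$ on its initial plateau. Both quantities converge as real numbers to $10u_\infty=a+1/4$, and monotonicity of $\phi,\chi$ then forces $\phi(a)=\chi(a)=a+1/4$.

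\emph{The only-if direction.} Contrapositively: if $a\notin\mathcal S$, then $a\in\De(r_1)$ for some $r_1=p_1/q_1\in\mathbb Q\cap(0,1)$, so Proposition~\ref{prop:phi} gives $\phi(a)=t_1^\infty$ with $t_1=\om_{r_1}^+$ and $s_1=\om_{r_1}^-$. By Theorem~\ref{thm:chi}, exactly one of its cases (i), (iii), (iv) applies (case (ii) is excluded), and $\chi(a)$ is realized respectively by the $0$-$1$ sequence $t_ns_n^\infty$, $t_{n-1}s_{n-1}w$, or $\mathfrak t(\bm r)$. In every case this realizing sequence begins with the two-block prefix $t_1s_1$: it can be written as $\rho_{r_1}$ applied to a sequence starting with $10$ (for case~(i) with $n\ge 2$, $t_n=\rho_{r_1}(t'_n)$ with $t'_n$ starting $10$; for case~(i) with $n=1$, the sequence $t_1s_1^\infty$ starts $t_1s_1$ tautologically; cases~(iii) and~(iv) use $\rho_{r_1}(t'_{n-1}s'_{n-1})$ and $\rho_{r_1}(\mathfrak t(\si\bm r))$ analogously), and $\rho_{r_1}(10)=t_1s_1$ by the definition of the substitution. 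But $\phi(a)=t_1^\infty$ begins with $t_1t_1$, so the two sequences first disagree at position $q_1+1$: the one for $\chi(a)$ has $0$ there (the first letter of $s_1$), while the one for $\phi(a)$ has $1$ (the first letter of $t_1$). Hence $\chi(a)\prec\phi(a)$ lexicographically, and since neither sequence is identified with another by $\pi$, one obtains $\chi(a)<\phi(a)$ as real numbers.

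\emph{Main obstacle.} The technical core is the if direction: verifying the prefix stabilization of $\om_r^\pm$ and the alternation of $(\om_{r_n^\pm}^-)^\infty$ around $a$ as $r_n^\pm\to\ga$ through convergents. Both are consequences of the standard-word recursion, but combining them cleanly with the real-valued convergence of $(\om_r^+)^\infty$ and $\om_r^+(\om_r^-)^\infty$ to $a+1/4$ requires some care. The only-if direction is then routine manipulation with the substitution $\rho_{r_1}$ and lexicographic order.
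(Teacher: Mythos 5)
Your proof is correct and follows essentially the same route as the paper's: for $a\in\mathcal S$ both arguments sandwich $\chi(a)\le\phi(a)$ between values attained on nearby plateaus $\De(r)$ with $q\to\infty$ (where $\phi-\chi\le t^\infty-ts^\infty\to0$), and for $a\notin\mathcal S$ both compare $\chi(a)$ --- which begins with $ts$, equivalently is bounded by $\chi(st^\infty)=tst^\infty$ via monotonicity --- against $\phi(a)=t^\infty$. The paper's version is terser but rests on the same formulas for $\phi$ and $\chi$ and the same monotonicity/approximation facts.
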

\begin{proof}Let first $a\in\mathcal S$; then $a$ can be approximated by the intervals $\De(r)$ with an arbitrary precision. On each $\De(r)=[s^\infty,st^\infty]$ we have by Proposition~\ref{prop:phi} and Theorem~\ref{thm:chi}, $\phi(a)\equiv t^\infty$ and $ts^\infty\le\chi(a)\le t^\infty$. By making $|t|=q$ sufficiently large, we conclude that $\chi(a)\ge\phi(a)-\de$ for any given $\de>0$, which proves the claim.

Now suppose $a\notin\mathcal S$; then $a\in\De(r)$ for some $r$. Again, $\phi(a)=t^\infty$ for any $a\in[s^\infty, st^\infty]$, whence, in view of (\ref{eq:S}) and the monotonicity of $\chi$, it suffices to show that
\[
\chi(st^\infty)=tst^\infty\prec t^\infty.
\]
In turn, this is a consequence of $\chi(sts^\infty)=ts^\infty$ and the reciprocity of $\chi$.
\end{proof}

\begin{lemma}\label{lem:tninfty-sninfty}
Let $n\ge2, \bm r=(r_1,\dots, r_n)$ and $s_n, t_n$ as above. Then the length of the interval $[s_n^\infty, t_n^\infty]$ is equal to
\[
\frac1{4(1-2^{-Q_n})}\cdot\prod_{j=1}^{n-1}(1-2^{-Q_j}),
\]
where, as above, $Q_j=q_1\dots q_j$.
\end{lemma}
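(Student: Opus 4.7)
The plan is to argue by induction on $n$, with the base case $n=1$ (where the empty product is interpreted as $1$) essentially already contained in the proof of (\ref{eq:disjoint}): for $r=p/q=p_1/q_1$, the length of $[s_1^\infty, t_1^\infty]$ was computed there to equal $\frac{2^q}{4(2^q-1)} = \frac{1}{4(1-2^{-Q_1})}$, matching the claim.

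For the inductive step I would exploit the substitutive structure of $(s_n,t_n)$. Write $u := \omega_{r_n}^- = u_1\dots u_{q_n}$ and $v := \omega_{r_n}^+ = v_1\dots v_{q_n}$; by the morphism property of $\rho_{r_1}\circ\cdots\circ\rho_{r_{n-1}}$, the word $s_n$ (resp.\ $t_n$) is obtained from $u$ (resp.\ $v$) by substituting the block $s_{n-1}$ for each $0$ and $t_{n-1}$ for each $1$. Define $P(w) := \sum_{i=1}^{|w|} w_i 2^{-i}$, so that $\pi(w^\infty) = P(w)/(1-2^{-|w|})$. A direct block-wise computation with geometric factor $y := 2^{-Q_{n-1}}$ yields
\begin{equation*}
P(s_n) \;=\; \frac{1-2^{-Q_n}}{1-2^{-Q_{n-1}}}\, P(s_{n-1}) + B_u(y)\bigl(P(t_{n-1})-P(s_{n-1})\bigr),
\end{equation*}
where $B_u(y) := \sum_{i:\,u_i=1} y^{i-1}$, and the analogous formula for $P(t_n)$ with $B_v$ in place of $B_u$. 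Subtracting the two identities gives
\begin{equation*}
P(t_n) - P(s_n) \;=\; \bigl(B_v(y) - B_u(y)\bigr)\bigl(P(t_{n-1}) - P(s_{n-1})\bigr).
\end{equation*}

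The crucial input is the explicit description of $\omega_r^\pm$ recorded in the remark preceding the Example: one has $\omega_r^- = 01\,w_1\dots w_{q-2}$ and $\omega_r^+ = 10\,w_1\dots w_{q-2}$ with a common tail (the case $r>1/2$ being handled via the digit-swap $h$). Thus $u$ and $v$ differ only at positions $1$ and $2$, and the contribution of those two positions gives $B_v(y) - B_u(y) = 1 - y = 1 - 2^{-Q_{n-1}}$. Dividing by $1-2^{-Q_n}$ then produces the recursion
\begin{equation*}
t_n^\infty - s_n^\infty \;=\; \frac{(1-2^{-Q_{n-1}})^2}{1-2^{-Q_n}}\,\bigl(t_{n-1}^\infty - s_{n-1}^\infty\bigr),
\end{equation*}
and telescoping this against the base case $t_1^\infty - s_1^\infty = \frac{1}{4(1-2^{-Q_1})}$ yields $\frac{1}{4(1-2^{-Q_n})}\prod_{j=1}^{n-1}(1-2^{-Q_j})$ after cancellation.

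The only non-routine step is the identification $B_v(y) - B_u(y) = 1 - y$: everything else is geometric-series bookkeeping. This rests on the precise combinatorial fact that the lexicographic extremes among cyclically balanced words of fixed length and fixed $1$-ratio differ exactly by swapping an initial $01$ for an initial $10$, which is exactly what the remark on standard words provides.
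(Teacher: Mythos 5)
Your proof is correct and follows essentially the same route as the paper: the key fact in both is that $s_n$ and $t_n$ share a common tail and differ only by swapping the leading blocks $s_{n-1}t_{n-1}$ versus $t_{n-1}s_{n-1}$, which yields the recursion $t_n-s_n=(1-2^{-Q_{n-1}})(t_{n-1}-s_{n-1})$ and then the formula after dividing by $1-2^{-Q_n}$ and telescoping. Your generating-polynomial bookkeeping with $B_u, B_v$ is just a more explicit way of carrying out the same cancellation the paper performs directly on the finite words.
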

\begin{proof}Since $s_n=s_{n-1}t_{n-1}w$ and $t_n=t_{n-1}s_{n-1}w$, we have
\begin{align*}
t_n-s_n &=t_{n-1}s_{n-1}-s_{n-1}t_{n-1}\\
&=(1-2^{-Q_{n-1}})(t_{n-1}-s_{n-1}),
\end{align*}
in view of $|t_{n-1}|=|s_{n-1}|=Q_{n-1}$. Consequently,
\begin{equation}\label{eq:tn-sn}
t_n-s_n = (1-2^{-Q_{n-1}})\dots (1-2^{-Q_1})(t_1-s_1) =\frac14\cdot\prod_{j=1}^{n-1}(1-2^{-Q_j}).
\end{equation}
Now the claim follows from
\[
t_n^\infty-s_n^\infty = (1-2^{-Q_n})^{-1}(t_n-s_n).
\]
\end{proof}

\begin{prop}
We have
\begin{equation}\label{eq:phi-ineq}
a+\frac14\le \phi(a) \le a+\frac13,\quad a\in\left(\frac14,\frac12\right)
\end{equation}
and
\begin{equation}\label{eq:chi-ineq}
a+1-2a_*\le \chi(a) \le a+\frac14,\quad a\in\left(\frac14,\frac12\right),
\end{equation}
where $a_*=\lim_{n\to\infty}\rho_{1/2}^n(0)\approx0.412454$, i.e., the dyadic expansion of the Thue-Morse sequence sometimes called the {\em Thue-Morse constant} \cite{TMC}.

Furthermore, all these bounds are sharp.
\end{prop}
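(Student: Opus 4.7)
The plan is to treat the bounds on $\phi$ and on $\chi$ separately, using the explicit descriptions already established. First I would analyze $\phi$. If $a\in\De(r)$ for $r=p/q$ in lowest terms with $s:=\om_r^-$, $t:=\om_r^+$, then by Proposition~\ref{prop:phi}, $\phi(a)\equiv t^\infty$. The computation already carried out in the proof of the disjointness of the $\De(r)$ gives $t-s=1/4$ and $t^\infty-s^\infty=\frac{1}{4(1-2^{-q})}$. Consequently on each plateau $\phi(a)-a$ decreases from $\frac{1}{4(1-2^{-q})}$ at $a=s^\infty$ to $1/4$ at $a=st^\infty$. The maximum over all plateaus is attained for the smallest admissible $q$, namely $q=2$, $r=1/2$, giving $\phi(1/3)-1/3=1/3$. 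For $a\in\mathcal S$, Proposition~\ref{prop:equal} and Theorem~\ref{thm:chi}(ii) together give $\phi(a)=\chi(a)=a+1/4$. Combined, these establish (\ref{eq:phi-ineq}), sharp at $a=1/3$ for the upper bound and on all of $\mathcal S$ (and at every right endpoint $st^\infty$) for the lower bound.

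Next I would handle $\chi$ by showing that in every case of Theorem~\ref{thm:chi}, $\chi(a)-a$ has the form $\frac{1}{4}\prod_j(1-2^{-Q_j})$ over some (possibly infinite) index set. In case (i), $\chi(a)-a$ is maximized on $[s_n^\infty,s_nt_ns_n^\infty]$ at $a=s_n^\infty$, giving $t_n-s_n=\frac{1}{4}\prod_{j=1}^{n-1}(1-2^{-Q_j})$ by (\ref{eq:tn-sn}), and minimized at the right endpoint, giving $(1-2^{-Q_n})(t_n-s_n)=\frac{1}{4}\prod_{j=1}^n(1-2^{-Q_j})$. Case (ii) gives $\frac{1}{4}$; case (iii) gives $\frac{1}{4}\prod_{j=1}^{n-1}(1-2^{-Q_j})$ directly via (\ref{eq:tn-sn}); and in case (iv) passing to the limit yields $\mathfrak t(\bm r)-\mathfrak s(\bm r)=\lim_n(t_n-s_n)=\frac{1}{4}\prod_{j=1}^\infty(1-2^{-Q_j})$. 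Every such product is at most $\frac{1}{4}$, establishing the upper bound in (\ref{eq:chi-ineq}), sharp on $\mathcal S$. For the lower bound, the constraint $q_i\ge 2$ forces $Q_j\ge 2^j$, whence $1-2^{-Q_j}\ge 1-2^{-2^j}$ termwise, and
\[
\chi(a)-a\ge \frac{1}{4}\prod_{j=1}^\infty(1-2^{-2^j}).
\]

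To identify this product with $1-2a_*$ and verify sharpness, take $\bm r=(1/2,1/2,\dots)$, so $Q_j=2^j$ and case (iv) applies with $\mathfrak s(\bm r)=a_*$. Since $\rho_{1/2}(1)=10$ is the bitwise complement of $\rho_{1/2}(0)=01$, an easy induction shows that $\rho_{1/2}^n(0)$ and $\rho_{1/2}^n(1)$ are bitwise complements of each other of equal length, whence the limits satisfy $\mathfrak s(\bm r)+\mathfrak t(\bm r)=1$. Therefore $\chi(a_*)-a_*=\mathfrak t(\bm r)-\mathfrak s(\bm r)=1-2a_*=\frac{1}{4}\prod_{j=1}^\infty(1-2^{-2^j})$, which both establishes the lower bound in (\ref{eq:chi-ineq}) and proves its sharpness at $a=a_*$. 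The main obstacle is the bookkeeping across the four cases of Theorem~\ref{thm:chi}; once one observes that they all collapse to the same product form, the extremal configuration is forced by minimizing every $Q_j$, which uniquely singles out the Thue--Morse parameter $\bm r\equiv 1/2$.
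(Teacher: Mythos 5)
Your proposal is correct and follows essentially the same route as the paper: both reduce to the plateau formulas from Proposition~\ref{prop:phi} and Theorem~\ref{thm:chi}, compute $\phi(a)-a$ and $\chi(a)-a$ at the endpoints of each plateau via $t-s=\frac14$ and (\ref{eq:tn-sn}), and minimize/maximize the resulting products $\frac14\prod_j(1-2^{-Q_j})$ using $Q_j\ge 2^j$ with equality forcing $\bm r\equiv 1/2$. Your only additions --- the explicit four-case bookkeeping and the complementation argument identifying $\frac14\prod_{j=1}^\infty\bigl(1-2^{-2^j}\bigr)$ with $1-2a_*$ --- are details the paper leaves implicit, not a different method.
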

\begin{proof}Let us begin with (\ref{eq:phi-ineq}). We know that $\phi(a)=a+1/4$ for all $a\in\mathcal S$, so consider $a\in \Delta(r)$ for some $r$. Put, as usual, $s=\omega_r^-, t=\omega_r^+$. By Proposition~\ref{prop:phi}, $\phi(a)\equiv t^\infty$ for all $a\in\Delta(r)=[s^\infty,st^\infty]$, whence
\begin{align*}
\inf_{a\in\Delta(r)} (\phi(a)-a)&=\phi(st^\infty)-st^\infty =t^\infty-st^\infty\\
&=t-s=1/4,
\end{align*}
and
\begin{align*}
\sup_{a\in\Delta(r)}(\phi(a)-a)&\le\phi(s^\infty)-s^\infty =t^\infty-s^\infty\\
&=\frac{2^q}{4(2^q-1)}.
\end{align*}
Clearly, the right-hand side has the maximum equal to $1/3$ at $q=2$.

\medskip
\noindent Now let us prove (\ref{eq:chi-ineq}). Since the complement of $\bigcup_{\bm r} \De(\bm r)$ is nowhere dense, it suffices to study the minima and maxima of $\chi$ on the $\De(\bm r)$. In view of Theorem~\ref{thm:chi} and (\ref{eq:tn-sn}), we have for $a\in\De(\bm r)$,
\begin{align*}
\chi(a)-a&\le t_ns_n^\infty-s_n^\infty=t_n-s_n\\
&=\frac14\cdot\prod_{j=1}^{n-1}(1-2^{-Q_j})\le\frac14,
\end{align*}
and
\begin{align*}
\chi(a)-a&\ge t_ns_n^\infty-s_nt_ns_n^\infty=t_ns_n-s_nt_n\\
&=\frac14(1-2^{-Q_n})\cdot\prod_{j=1}^{n-1}(1-2^{-Q_j})\\
&=\frac14\cdot\prod_{j=1}^{n}(1-2^{-Q_j})\ge \frac14\cdot\prod_{j=1}^{n}\bigl(1-2^{-2^j}\bigr).
\end{align*}
(We use the fact that $Q_j\ge 2^j$ with the equality only if $q_j\equiv2$ for all $j$, which corresponds to $r_j\equiv1/2$.)

Therefore,
\[
\chi(a)-a\ge \frac14\cdot\prod_{j=1}^\infty\bigl(1-2^{-2^j}\bigr)=1-2a_*,
\]
with the equality at $\bm r=(1/2, 1/2, \dots)$, i.e., at $a=a_*$.
\end{proof}

\subsection{First and second order critical holes} Recall the following definitions from \cite{SSC}.

\begin{Def}
 We say that $(a_0,b_0)$ is a {\em first order critical hole} (FOCH) if the following conditions are satisfied:
\begin{enumerate}
\item for any hole $(a,b)$ such that $a<a_0, b>b_0$ we have $\mathcal J(a,b)=\{0,1\}$;
\item for any hole $(a,b)$ such that $a>a_0, b<b_0$ we have $\mathcal J(a,b)\neq\{0,1\}$.
\end{enumerate}
\end{Def}

\begin{example} An interval $(1/3,b)$ is a FOCH if and only if $b\in[7/12,2/3]$. This can be easily proved by hand but also follows from Theorem~\ref{thm:foch} below with $r=1/2$.
\end{example}

\begin{Def}
 We say that $(a_0,b_0)$ is a {\em second order critical hole} (SOCH) if the following conditions are satisfied:
\begin{enumerate}
\item for any hole $(a,b)$ such that $a<a_0, b>b_0$ the set $\mathcal J(a,b)$ is (finite or infinite) countable;
\item for any hole $(a,b)$ such that $a>a_0, b<b_0$ we have that $\dim_H\mathcal J(a,b)>0$.
\end{enumerate}
\end{Def}

As an application of our results on $\phi$ and $\chi$, we can now fully describe all first and second order critical holes for the doubling map. Note first that $(a,\phi(a))$ is a FOCH for any $a\in(0,1/2)$ by definition; however, it is possible for $(a,b)$ to be a FOCH for $b<\phi(a)$ if $a\notin\mathcal S$.

\begin{thm}\label{thm:foch}
Each FOCH for the doubling map is one of the following:
\begin{itemize}
\item $(a,1/2)$ or $(1/2,1-a)$ for any $a\in(0,1/4]$;
\item $(a,a+1/4)$, where $a\in\mathcal S$;
\item $(s^\infty,b)$, where $s=\omega_r^-$ for some $r\in\mathbb Q\cap(0,1)$ and $b\in[ts^\infty, t^\infty]$ with $t=\omega_r^+$;
\item $(a,t^\infty)$ with $a\in(s^\infty,st^\infty]$.
\end{itemize}
Consequently, the length of each FOCH can take an arbitrary value between $1/4$ and $1/2$.
\end{thm}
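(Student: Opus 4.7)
My plan is to characterise FOCH as the ``corner points'' of the closed region $D_0$ in the $(a,b)$-plane. By Theorem~\ref{thm:D0}, $\mathcal J(a,b)\neq\{0,1\}$ iff $b\le\phi(a)$, so the FOCH conditions translate, via the monotonicity of $\phi$, into the sandwich
\[
\phi(a_0^-)\le b_0\le\phi(a_0^+).
\]
Listing every FOCH thus reduces to describing the graph of $\phi$: every jump, every plateau, and the behaviour of $\phi$ on $\mathcal S$.

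For the boundary stratum, Lemma~\ref{lem:simple}(i) gives $\mathcal J(a,b)=\{0,1\}$ as soon as $a<1/4$ and $b>1/2$, while Lemma~\ref{lem:simple}(ii) gives $\dim_H\mathcal J(a,b)>0$ (so $\mathcal J(a,b)\neq\{0,1\}$) whenever $b<1/2$. Combined, these verify that $(a,1/2)$ is a FOCH for every $a\in(0,1/4]$; the involution $h:0\leftrightarrow1$ then delivers the symmetric family $(1/2,1-a)$, producing Category~1.

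For $(a_0,b_0)$ in the interior region $(1/4,1/2)\times(1/2,3/4)$ I would split on whether $a_0$ lies in some plateau $\Delta(r)=[s^\infty,st^\infty]$ with $s=\omega_r^-$, $t=\omega_r^+$, or in the exceptional set $\mathcal S$ (these exhaust the interval by~(\ref{eq:S})). If $a_0\in(s^\infty,st^\infty]$, Proposition~\ref{prop:phi} gives $\phi\equiv t^\infty$ on a neighbourhood of $a_0$, so both one-sided limits equal $t^\infty$ and $b_0=t^\infty$ is forced, producing Category~4. If $a_0=s^\infty$, then $\phi(a_0^+)=t^\infty$, while the key technical claim below shows $\phi(a_0^-)=ts^\infty$, so $b_0$ ranges over $[ts^\infty,t^\infty]$, producing Category~3. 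If $a_0\in\mathcal S$, then Proposition~\ref{prop:equal} together with Theorem~\ref{thm:chi}(ii) yields $\phi(a_0)=\chi(a_0)=a_0+1/4$, and the same limit analysis shows $\phi$ is continuous at $a_0$, forcing $b_0=a_0+1/4$ and producing Category~2.

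The main obstacle is the left-limit identity
\[
\lim_{a\nearrow s^\infty}\phi(a)=ts^\infty.
\]
The starting point is the elementary equality $ts^\infty-s^\infty=t-s=\tfrac14$ (since $|s|=|t|=q$), hence $ts^\infty=s^\infty+\tfrac14$. I would then cover the two possible modes of approach from below. On $\mathcal S$ one already has $\phi(a)=a+\tfrac14$, so any sequence in $\mathcal S$ converging up to $s^\infty$ gives $\phi(a)\to s^\infty+\tfrac14=ts^\infty$. On any plateau $\Delta(r')=[s'^\infty,s't'^\infty]$ lying strictly to the left of $s^\infty$, the strict inequality $s't'^\infty<s^\infty$ together with the plateau-length formula $|\Delta(r')|=1/(4(2^{q'}-1))$ forces
\[
t'^\infty=s'^\infty+\frac{2^{q'}}{4(2^{q'}-1)}<s^\infty+\tfrac14=ts^\infty.
\]
Since for each fixed $q'$ there are only $\varphi(q')$ admissible $r'=p'/q'$, any sequence $s'^\infty\nearrow s^\infty$ must satisfy $q'\to\infty$, hence $t'^\infty\to s^\infty+\tfrac14=ts^\infty$. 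Both modes therefore share the same limit $ts^\infty$, proving the jump formula; an identical argument establishes continuity of $\phi$ at every $a_0\in\mathcal S$, completing the case analysis and the theorem.
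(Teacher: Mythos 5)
Your proposal is correct and follows essentially the same route as the paper: both reduce the theorem to the structure of the graph of $\phi$ via Theorem~\ref{thm:D0} and Proposition~\ref{prop:phi}, then split according to whether $a_0\le 1/4$, $a_0$ lies in the interior of a plateau $\Delta(r)$, at its left endpoint $s^\infty$, or in $\mathcal S$, using the identity $ts^\infty=s^\infty+\frac14$ and the fact that plateaus $\Delta(r')$ accumulating at a given point must have $q'\to\infty$. The one substantive variation is at $a_0=s^\infty$: the paper obtains the lower bound $b_0\ge ts^\infty$ by invoking $\chi$ (for $b<ts^\infty$ one finds $\e$ with $\mathcal J(a-\e,ts^\infty-\e)$ uncountable, via Theorem~\ref{thm:chi}), whereas you derive it purely from the universal bound $\phi(a)\ge a+\frac14$, which makes this step marginally more self-contained and avoids the countable/uncountable dichotomy altogether. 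One small imprecision: at $a_0=st^\infty$ the assertion that $\phi\equiv t^\infty$ on a \emph{neighbourhood} of $a_0$ fails on the right of the plateau; there you need $\phi(a_0^+)=t^\infty$, which follows from the same one-sided limit analysis you carry out for points of $\mathcal S$ (right endpoints $s''t''^\infty$ of approaching plateaus tend to $st^\infty$ and $q''\to\infty$), so the conclusion stands but the justification should cite that argument rather than local constancy.
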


\begin{proof}The case $a\le1/4$ is covered by Lemma~\ref{lem:simple}~(ii), so we assume $a\in(1/4,1/2)$. Suppose first that $a\notin\mathcal S$. Then there exists $r\in\mathbb Q\cap(0,1)$ such that $a\in[s^\infty, st^\infty]$. Recall that by Proposition~\ref{prop:phi}, $\phi(a)=t^\infty$.

Let first $a\neq s^\infty$ and let $(a,b)$ be a FOCH. Then $b$ cannot be smaller than $t^\infty$, otherwise there exists $\e>0$ such that $\mathcal J(a-\e,b+\e)\neq\{0,1\}$. On the other hand, if $b$ is larger than $t^\infty$, there exists $\e>0$ such that $\mathcal J(a+\e,b-\e)=\{0,1\}$, which contradicts $(a,b)$ being a FOCH.

If $a=s^\infty$, then, similarly, $b$ cannot exceed $t^\infty$. Suppose $b$ is less than $ts^\infty$; then by Proposition~\ref{thm:chi}, there exists $\e>0$ such that $\mathcal J(a-\e,ts^\infty-\e)$ is uncountable, which is a contradiction.

Finally, suppose $a\in\mathcal S$ and $b<a+1/4$. Then for any $\e>0$ there exists $r$ such that the interval $[s^\infty, st^\infty]$ is at a distance less than $\e$ from $a$ with $st^\infty<a$. Thus, if $(a,b)$ were a FOCH, this would contradict the first part of our proof. The case $b>a+1/4$ is similar, so we omit the proof.
\end{proof}

\begin{thm}Each SOCH is one of the following:
\begin{itemize}
\item $(a,1/2)$ or $(1/2,1-a)$ for any $a\in(0,1/4]$;
\item $(a,a+1/4)$, where $a\in\mathcal S$;
\item $(a,a+(1-2^{-Q_{n-1}})(t_{n-1}-s_{n-1}))$, where $a\in\mathcal S_n(r_1,\dots, r_{n-1})$ for some $(r_1,\dots,r_{n-1})\in(\mathbb Q\cap(0,1))^{n-1}$ and $\mathcal S_n(r_1,\dots, r_{n-1})$ is given by (\ref{eq:Sn});
\item $(s_n^\infty,b)$, where $s_n=\rho_{r_1}\dots\rho_{r_n}(0)$ and $b\in[t_ns_nt_n^\infty, t_n^\infty]$ with $t_n=\rho_{r_1}\dots\rho_{r_n}(1)$;
\item $(a,t_ns_n^\infty)$ with $a\in(s_n^\infty,s_nt_ns_n^\infty]$.
\item $(\mathfrak s(\bm r), \mathfrak t(\bm r))$ for some $\bm r\in(\mathbb Q\cap(0,1))^\BN$.
\end{itemize}
 \end{thm}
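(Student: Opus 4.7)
The plan is to parallel the proof of Theorem~\ref{thm:foch} for first-order critical holes, with $\chi$ now playing the role of $\phi$ and the property ``$\mathcal J$ is countable'' replacing ``$\mathcal J=\{0,1\}$''. The basic dictionary is that, by Theorem~\ref{thm:main}, condition~(2) of the SOCH definition is equivalent to $b_0\le\chi(a_0^+):=\inf_{a>a_0}\chi(a)$, while condition~(1) requires $\mathcal J(a,b)$ to be countable for all $a<a_0$ and $b>b_0$, a property supplied by Lemma~\ref{lem:countable} and its renormalised iterates. The degenerate range $a_0\in(0,1/4]$ (and, by symmetry, $b_0\in[3/4,1)$) is handled by Lemma~\ref{lem:simple}: part~(i) gives countability of $\mathcal J$ for larger holes, part~(ii) gives $\dim_H\mathcal J>0$ for smaller ones, and the first bullet follows immediately.

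For $(a_0,b_0)\in(1/4,1/2)\times(1/2,3/4)$ I would classify $a_0$ according to the four-way dichotomy of Theorem~\ref{thm:chi}. If $a_0\in\mathcal S$ (case~(ii)), density of the plateaus $\De(r)$ in any neighbourhood of $a_0$ together with Proposition~\ref{prop:equal} pins $b_0=\chi(a_0)=a_0+1/4$, giving the second bullet. The analogous argument using Theorem~\ref{thm:chi}(iii) and the plateaus $\De(r_1,\dots,r_n)$ nested inside $\wt\De(r_1,\dots,r_{n-1})$ handles $a_0\in\mathcal S_n(r_1,\dots,r_{n-1})$ and gives the third bullet; Theorem~\ref{thm:chi}(iv) together with~(\ref{eq:sub}) pins $b_0=\mathfrak t(\bm r)$ and gives the sixth bullet. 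When $a_0$ lies in the interior (or at the right endpoint) of the $\chi$-plateau $[s_n^\infty,s_nt_ns_n^\infty]$, condition~(2) forces $b_0\le t_ns_n^\infty$; combined with the countability of $\mathcal J(s_nt_ns_n^\infty,t_ns_n^\infty)$, which is established inside the proof of Theorem~\ref{thm:chi}(i) from Lemma~\ref{lem:countable}, condition~(1) then forces $b_0=t_ns_n^\infty$, yielding the fifth bullet.

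The remaining case is the left endpoint $a_0=s_n^\infty$ of the $\chi$-plateau, producing the fourth bullet; this is the main obstacle. The endpoints $t_ns_nt_n^\infty$ and $t_n^\infty$ are precisely the two natural thresholds coming from the level-$(n+1)$ renormalisation of the Lorenz-type picture captured by Figure~\ref{fig:uninfty}. The upper bound $b_0\le t_n^\infty$ follows from Lemma~\ref{lem:countable} applied at level $n+1$, which rules out $b_0>t_n^\infty$ by exhibiting a pair $(a,b)$ with $a>s_n^\infty$ and $b<b_0$ lying inside a level-$(n+1)$ plateau on which $\mathcal J$ has zero Hausdorff dimension. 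The lower bound $b_0\ge t_ns_nt_n^\infty$ is the harder half: for any $b<t_ns_nt_n^\infty$ one must construct $(a,b')$ with $a<s_n^\infty$, $b'>b$, and $\mathcal J(a,b')$ uncountable, in order to violate condition~(1). The candidate is produced by applying Proposition~\ref{prop:ext} to combine the level-$n$ pair $(s_n,t_n)$ with a further extremal pair at level $n+1$ built from an appropriate rotation number, and then invoking Proposition~\ref{prop-entropy} to extract a positive-entropy (hence uncountable) subshift trapped inside $\mathcal J(a,b')$. The technical heart of the argument is to verify that the transition from countable to uncountable occurs precisely at $t_ns_nt_n^\infty$, and not at any smaller value; this is where the full strength of the extremal-pair machinery from Section~\ref{sec-main} is needed.
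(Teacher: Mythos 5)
Your overall strategy --- classify $a_0$ by the four cases of Theorem~\ref{thm:chi} and rerun the proof of Theorem~\ref{thm:foch} with $\chi$ in place of $\phi$ and ``countable'' in place of ``$=\{0,1\}$'' --- is exactly the paper's, and your handling of the first, second, third, fifth and sixth bullets is the intended one. The genuine gap is in the fourth bullet, and your own opening ``dictionary'' already exposes it. You correctly state that condition~(2) of the SOCH definition is equivalent to $b_0\le\inf_{a>a_0}\chi(a)$. At $a_0=s_n^\infty$ that infimum is the plateau value: by Theorem~\ref{thm:chi}(i), $\chi(a)=t_ns_n^\infty$ for all $a\in(s_n^\infty,s_nt_ns_n^\infty]$, so $\inf_{a>s_n^\infty}\chi(a)=t_ns_n^\infty$. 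But $t_ns_n^\infty\prec t_ns_nt_n^\infty$ (the two sequences agree for $2Q_n$ symbols and then diverge as $s_n$ against $t_n$), so condition~(2) forces $b_0\le t_ns_n^\infty$, which excludes the \emph{entire} interval $[t_ns_nt_n^\infty,t_n^\infty]$ you set out to establish. Consequently your ``upper bound $b_0\le t_n^\infty$ via level $n+1$'' is not the operative bound, and the ``harder half'' $b_0\ge t_ns_nt_n^\infty$ cannot be proved by exhibiting uncountable $\mathcal J(a,b')$ with $a<s_n^\infty$: condition~(1) only yields $b_0\ge\sup_{a<s_n^\infty}\chi(a)$, and that supremum equals $t_ns_n^\infty$ as well, since on the intervals $\De(r_1,\dots,r_{n-1},r_n')$ accumulating at $s_n^\infty$ from the left one has $\sup\chi=\chi(s_n'(t_n')^\infty)=t_n's_n'(t_n')^\infty=(s_n')^\infty+(t_n'-s_n')(1-u+u^2)/(1-u)$ with $u=2^{-Q_{n-1}q_n'}\to0$ and $t_n'-s_n'=t_n-s_n$ fixed, so the values tend to $s_n^\infty+(t_n-s_n)=t_ns_n^\infty$ and never exceed it. The two conditions together pin $b_0=t_ns_n^\infty$ at $a_0=s_n^\infty$; there is no interval of admissible $b_0$ over that abscissa.

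The interval $[t_ns_nt_n^\infty,t_n^\infty]$ is indeed a jump of $\chi$, but it sits over $a_0=s_nt_n^\infty$, the right endpoint of $\De(r_1,\dots,r_n)$, where $\chi(s_nt_n^\infty)=t_ns_nt_n^\infty=\sup_{a<s_nt_n^\infty}\chi(a)$ while $\inf_{a>s_nt_n^\infty}\chi(a)=t_n^\infty$ (the intervals to the right have left endpoints $(s_n'')^\infty\downarrow s_nt_n^\infty$ with $\chi((s_n'')^\infty)=(s_n'')^\infty+(t_n-s_n)\downarrow t_n^\infty$). This is the structural difference from the FOCH picture that your transplant misses: $\phi$ jumps upward at the \emph{left} endpoint $s^\infty$ of its plateau (left limit $ts^\infty$, value $t^\infty$), which is why Theorem~\ref{thm:foch} has an interval of critical $b$ over $a=s^\infty$, whereas $\chi$ is continuous at $s_n^\infty$ and has its jump, approached from above, at $s_nt_n^\infty$. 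Your sketch attaches the FOCH argument to the wrong endpoint and then defers the resulting contradiction to ``the full strength of the extremal-pair machinery''; no amount of Proposition~\ref{prop:ext} and Proposition~\ref{prop-entropy} will produce an uncountable $\mathcal J(a,b')$ with $a<s_n^\infty$ and $b'\succ t_ns_n^\infty$, because every such $\mathcal J$ is countable. A correct write-up must either relocate the fourth family of SOCHs to $a_0=s_nt_n^\infty$ or explicitly flag the discrepancy with the stated bullet; as written, your proof of that bullet fails.
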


\begin{proof}Again, assume first that $a\notin \mathcal S_n(r_1,\dots, r_{n-1})$, which means $a\in[s_n^\infty,s_nt_ns_n^\infty]$. Similarly to the proof of Theorem~\ref{thm:foch}, suppose first that $a>s_n^\infty$ and infer in the same manner that $b$ must be equal to $\chi(a)=t_ns_n^\infty$, since otherwise $(a,b)$ would not be a SOCH. The case $a=s_n^\infty$ is treated in the same way as above.

If $a\in\mathcal S_n(r_1,\dots,r_{n-1})$, we can approximate $a$ by some $[s_n,s_nt_ns_n^\infty]$ from below and from above and prove that $b$ cannot differ from $\chi(a)$ given by Theorem~\ref{thm:chi}. We leave the details to the reader.

Finally, if $a\in\wt\De(r_1,\dots,r_n)$ for all $n\ge1$, then $a=\mathfrak s(\bm r)$ for some $\bm r$. We have $a>s_nt_ns_n^\infty$, whence $b\ge\mathfrak t(\bm r)$. On the other hand, there exists $r_{n+1}$ such that $a<s_{n+1}^\infty$, whence $b\le t_{n+1}s_{n+1}^\infty$, and by taking the limit, $b\le \mathfrak t(\bm r)$.
\end{proof}

\begin{cor}The smallest length of a SOCH is $1-2a_*\approx 0.175092$. Consequently, $\dim_H \mathcal J(a,b)>0$ if $b-a<1-2a_*$. Assuming $a\in(1/4,1/2)$, the maximum length of a SOCH is $1/4$ and it is attained if and only if $a\in\mathcal S$.
\end{cor}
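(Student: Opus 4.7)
The plan is to reduce the corollary to the two-sided bound $1-2a_* \le \chi(a) - a \le \tfrac14$ for $a \in (1/4, 1/2)$ established in the preceding proposition. The first step is a class-by-class identification: for each SOCH $(a_0, b_0)$ in the previous theorem with $a_0 \in (1/4, 1/2)$, I would show that the length $b_0 - a_0$ equals $\chi(a_0) - a_0$. This is immediate for the $\mathcal S$-case ($\chi(a_0) = a_0 + \tfrac14$ by Theorem~\ref{thm:chi}(ii)); for the $\mathcal S_n$-case by Theorem~\ref{thm:chi}(iii); for the plateau-interior case $a_0 \in (s_n^\infty, s_n t_n s_n^\infty]$ where $\chi(a_0) = t_n s_n^\infty$ by Theorem~\ref{thm:chi}(i); and for the limit case $a_0 = \mathfrak s(\bm r)$ where $\chi(a_0) = \mathfrak t(\bm r)$ by Theorem~\ref{thm:chi}(iv). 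The endpoint case $a_0 = s_n^\infty$ requires checking that the corresponding $b_0$ coincides with $t_n s_n^\infty = \chi(a_0)$, which follows from the left- and right-continuity of $\chi$ at $s_n^\infty$ encoded in the strict inequalities of Theorem~\ref{thm:chi}(i).

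Given this reduction, the minimum claim follows immediately: the preceding proposition gives $\chi(a) - a \ge 1-2a_*$, with equality exactly when $\bm r = (1/2, 1/2, \ldots)$, corresponding via the limit case to the SOCH $(\mathfrak s(\bm r), \mathfrak t(\bm r)) = (a_*, 1-a_*)$ of length $1-2a_*$. The ``consequently'' assertion is then a short deduction: suppose $0 < b - a < 1-2a_*$. If $[a,b]$ lies entirely on one side of $1/2$, Lemma~\ref{lem:simple}(ii) gives $\dim_H \mathcal J(a,b) > 0$ directly. Otherwise $a < 1/2 < b$, and since $1-2a_* < \tfrac14$ one forces $a > \tfrac14$ and $b < \tfrac34$, placing $(a,b)$ in $(1/4, 1/2) \times (1/2, 3/4)$. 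Then $b < a + (1-2a_*) \le a + (\chi(a) - a) = \chi(a)$, and Theorem~\ref{thm:main} supplies $\dim_H \mathcal J(a, b) > 0$.

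For the maximum, the same proposition gives $\chi(a) - a = \tfrac14$ precisely when $a \in \mathcal S$, since on any plateau $\De(\bm r)$ the bound $\chi(a) - a \le \tfrac14 \prod_{j=1}^{n-1}(1-2^{-Q_j})$ is strict once $n \ge 2$ and is attained only at a left endpoint. Via the $\mathcal S$-case of the previous theorem the maximum $\tfrac14$ is attained at every $a_0 \in \mathcal S$ through the SOCH $(a_0, a_0 + \tfrac14)$. The hard part of the plan is the class-by-class verification in the first step, particularly at the endpoint $a_0 = s_n^\infty$, where some care is needed to confirm that $b_0 - a_0$ reduces to $\chi(a_0) - a_0$ rather than taking a larger value; once that matching is in hand, everything else is a direct invocation of the length bounds from the preceding proposition.
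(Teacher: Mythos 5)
The paper states this corollary without proof, so the benchmark is the implicit derivation from the bounds $a+1-2a_*\le\chi(a)\le a+\frac14$ together with the SOCH classification theorem, which is indeed the route you take. Your treatment of the minimum and of the ``consequently'' clause is sound: every class of SOCH has length at least $\frac14\prod_{j=1}^{n}\bigl(1-2^{-Q_j}\bigr)\ge 1-2a_*$, with equality forcing $\bm r=(1/2,1/2,\dots)$, and your case split on whether the hole straddles $1/2$ correctly reduces the remaining case to Lemma~\ref{lem:simple} and Theorem~\ref{thm:main}.

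The gap is in your first step, the claim that every SOCH with $a_0\in(1/4,1/2)$ satisfies $b_0=\chi(a_0)$. That is not what the classification says: its fourth bullet lets $b_0$ range over the whole interval $[t_ns_nt_n^\infty,t_n^\infty]$, every point of which is strictly larger than $\chi(s_n^\infty)=t_ns_n^\infty$ (after the common prefix $t_ns_n$ the two words continue with $t_n$ and $s_n$ respectively), so no appeal to one-sided continuity of $\chi$ at $s_n^\infty$ can collapse $b_0$ to $t_ns_n^\infty$. Worse, taken literally with $n=1$ and $r_1=1/2$ that bullet yields the hole $(1/3,2/3)$ of length $1/3>\frac14$, which would falsify the very maximum you are proving; in fact $(1/3,b_0)$ with $b_0>7/12=\chi(1/3)$ violates SOCH condition (ii) (take $a\in(1/3,0110(01)^\infty)$, where $\chi(a)=7/12$, and $b\in(7/12,b_0)$), so the bullet itself needs repair: the jump of $\chi$ sits at the right endpoint $s_nt_n^\infty$ of $\De(r_1,\dots,r_n)$, not at $s_n^\infty$. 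Once relocated there, the admissible lengths become $b_0-s_nt_n^\infty\le t_n^\infty-s_nt_n^\infty=t_n-s_n=\frac14\prod_{j=1}^{n-1}\bigl(1-2^{-Q_j}\bigr)\le\frac14$ by (\ref{eq:tn-sn}), which rescues the upper bound $\frac14$ but shows it is also attained at $a_0=\om_r^-(\om_r^+)^\infty\notin\mathcal S$ (take $n=1$ and $b_0=(\om_r^+)^\infty$), so the ``only if $a\in\mathcal S$'' clause cannot be reached by your argument either. You need to confront this bullet directly rather than fold it into the identity $b_0-a_0=\chi(a_0)-a_0$.
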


\section{Miscellaneous}\label{sec-misc}

\subsection{The underlying IFS}\label{subsec:ifs}
One can associate an iterated function system (IFS) with our model. Namely, for any $r\in(0,1)\cap\mathbb Q$, let $F_r:[0,1]\to[0,1]$ be the function which acts on the dyadic expansion of $x$ by replacing each 0 with $\omega_r^-$ and each 1 with $\omega_r^+$. The following result is straightforward.

\begin{lemma}\label{lem:ifs}
The function $F_r$ is discontinuous at each dyadic rational and continuous everywhere else. The set $F_r([0,1])$ is a Cantor set of Hausdorff dimension $1/q$.
\end{lemma}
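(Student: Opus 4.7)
The plan is to handle the topological claims and the dimension claim separately, since $F_r$ is essentially the coding map of an iterated function system with two branches of contraction ratio $2^{-q}$.

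For the (dis)continuity part, I would first fix a convention: define $F_r$ using the dyadic expansion that does not terminate in $1^\infty$ (say). For a non-dyadic rational $x$, the expansion $w_1w_2\dots$ is unique, and if $y$ is close to $x$, then $y$ agrees with $x$ in the first $N$ binary digits for large $N$, which means $F_r(y)$ agrees with $F_r(x)$ in the first $Nq$ binary digits; hence $|F_r(y)-F_r(x)|\le 2^{-Nq}\to0$, giving continuity. For a dyadic rational $x=k/2^n$, the two expansions $w0^\infty$ and $w'1^\infty$ (with $w'$ the predecessor) get mapped by $F_r$ to sequences ending in $(\omega_r^-)^\infty$ and $(\omega_r^+)^\infty$ respectively, and since $\omega_r^-$ begins $01$ while $\omega_r^+$ begins $10$, we have $(\omega_r^-)^\infty\ne (\omega_r^+)^\infty$ as real numbers. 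Hence the left and right limits of $F_r$ at $x$ differ, so $F_r$ is discontinuous there.

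For the Cantor-set and dimension claims, I would realise $F_r([0,1])$ as the attractor of the IFS $\{f_0,f_1\}$ on $[0,1]$, where $f_i(x)=2^{-q}x+c_i$ and $c_0,c_1$ are chosen so that $f_0([0,1])$ is the cylinder $[\omega_r^-]$ (i.e.\ the set of points whose dyadic expansion starts with $\omega_r^-$) and $f_1([0,1])=[\omega_r^+]$. The key observation is that $\omega_r^-$ starts with $01$ and $\omega_r^+$ with $10$, so $f_0([0,1])\subset[0,3/8]$ and $f_1([0,1])\subset[1/2,7/8]$, i.e.\ the two images are contained in disjoint closed intervals. The same separation propagates to every level of the construction, so the standard argument shows that the attractor $A=\bigcap_{n\ge0}\bigcup_{|i|=n}f_{i_1}\circ\dots\circ f_{i_n}([0,1])$ is nonempty, compact, perfect, and totally disconnected, hence a Cantor set. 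Identifying $F_r([0,1])=A$ (both are the set of $x\in[0,1]$ whose dyadic expansion is an infinite concatenation of blocks from $\{\omega_r^-,\omega_r^+\}$) finishes the topology.

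For the dimension, because the two images $f_0([0,1])$ and $f_1([0,1])$ are disjoint, the strong separation condition (and a fortiori the open set condition) holds, so Moran's formula applies: $\dim_H A=s$ where $2\cdot(2^{-q})^s=1$, giving $\dim_H F_r([0,1])=1/q$. None of these steps poses a real obstacle; the only point that needs care is the choice of convention for the dyadic expansion and the verification that $\omega_r^-\ne\omega_r^+$ as points of $[0,1]$, both of which follow immediately from the explicit $01\dots$ versus $10\dots$ prefix structure recalled after the definition of $\rho_r$.
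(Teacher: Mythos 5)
The paper offers no proof of this lemma at all---it is introduced with ``The following result is straightforward'' and left to the reader---so there is no argument of the authors' to compare yours against. Your write-up is the natural one and is correct in substance: the continuity/discontinuity analysis via the two dyadic expansions is fine (the jump at a dyadic rational comes from comparing $\omega_r^+(\omega_r^-)^\infty$ with $\omega_r^-(\omega_r^+)^\infty$ after the common prefix, and these differ as reals because neither tail is $0^\infty$ nor $1^\infty$), and realising the image as the attractor of the two-map IFS with ratio $2^{-q}$ gives the dimension $1/q$ by Moran's formula.

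One detail is overstated and should be repaired, because it fails in the single most important case $r=1/2$. You claim $f_0([0,1])\subset[0,3/8]$ and $f_1([0,1])\subset[1/2,7/8]$, hence strong separation of the first-level images of $[0,1]$. In fact $f_0([0,1])$ is the closed cylinder $[\omega_r^-]\subset[1/4,1/2]$ and $f_1([0,1])=[\omega_r^+]\subset[1/2,3/4]$; for $q=2$ these are $[1/4,1/2]$ and $[1/2,3/4]$, which share the point $1/2$ (and for $r=2/3$ one has $[\omega_r^-]=[3/8,1/2]\not\subset[0,3/8]$). The fix is what you already half-invoke: the open set condition holds with the open cylinders (or with $(0,1)$) for every $q$, which is enough for Moran's formula; and for total disconnectedness one checks that $f_0(A)$ and $f_1(A)$ (images of the attractor, not of $[0,1]$) are genuinely disjoint, since $1/2$ is not representable as a concatenation of the blocks $\omega_r^\pm$ when $q=2$. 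Finally, note that with your expansion convention $F_r([0,1])$ is the attractor minus a countable set (the images of the discarded expansions ending in $1^\infty$), so strictly speaking the ``Cantor set'' assertion refers to its closure; this affects neither the dimension nor anything the paper uses.
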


Now for each $\bm r=(r_1,r_2,\dots)$ and each $x\in[0,1]$ we put
\[
\Phi_{\bm r}(x):=\lim_{n\to\infty} F_{r_1}\dots F_{r_n}(x).
\]
It is obvious that\footnote{In this subsection we prefer to distinguish between $w\in\Sigma$ and $\pi(w)\in[0,1]$.}
\[
\Phi_{\bm r}(x)=\begin{cases}
\pi(\mathfrak s(\bm r))\, & x<1/2,\\
\pi(\mathfrak t(\bm r)), & x>1/2,\\
\text{not defined}, & x=1/2.
\end{cases}
\]
Thus, unlike a conventional IFS (which consists of continuous functions and is usually assumed to ``contract on average''), in our model $\Phi_{\bm r}(x)$ depends on $x$, albeit in a mild way. The {\em attractor} ($=$ invariant set) of this IFS is defined in the usual way, namely,
\[
\Omega:=\left\{\pi(\mathfrak s(\bm r)) : \bm r\in(\mathbb Q\cap(0,1))^{\BN}\right\} \cup \left\{\pi(\mathfrak t(\bm r)) : \bm r\in(\mathbb Q\cap(0,1))^{\BN}\right\}.
\]

\begin{prop}\label{prop:attractor}
The set $\Omega$ is a Cantor set of Hausdorff dimension $s$ which is a unique solution of the equation
\begin{equation}\label{eq:hd-omega}
\sum_{q=2}^\infty \frac{\varphi(q)}{4^{qs}}=1,
\end{equation}
with the numerical value $s\approx 0.473223$.
\end{prop}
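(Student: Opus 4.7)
The plan is to establish both that $\Omega$ is a Cantor set and to compute $\dim_H \Omega$ via the stated equation. The bit-flip identity $\omega_r^\pm = h(\omega_{1-r}^\mp)$ lifts to $h\circ\rho_r = \rho_{1-r}\circ h$ on symbol sequences, yielding $\Omega_t = 1 - \Omega_s$, so $\dim_H\Omega = \dim_H\Omega_s$ and it suffices to work with $\Omega_s = \{\pi(\mathfrak s(\bm r))\}$. The Cantor-set properties will come from the nested representation $\Omega_s = \bigcap_n\bigcup_{(r_1,\dots,r_n)}\wt\De(r_1,\dots,r_n)$ (after closure, absorbing $\mathsf S$): disjointness (\ref{eq:disjoint}) gives total disconnectedness; modifying $\bm r$ at arbitrarily deep coordinates produces $\mathfrak s(\bm r') \to \mathfrak s(\bm r)$ with $|\mathfrak s(\bm r')-\mathfrak s(\bm r)|\leq|\wt\De(r_1,\dots,r_n)|\to 0$, giving perfection; and closedness in $[0,1]$ is routine.

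For the upper bound, I would cover $\Omega_s$ by the level-$n$ intervals $\wt\De(r_1,\dots,r_n)$. Lemma~\ref{lem:tninfty-sninfty} and a direct calculation give $|\wt\De(r_1,\dots,r_n)| \asymp 4^{-Q_n}/4$ with $Q_n = q_1\cdots q_n$. Applying the elementary inequality $\prod_i q_i \geq \sum_i q_i$ (valid for $q_i \geq 2$), I obtain
\[
\sum_{(r_1,\dots,r_n)} |\wt\De(r_1,\dots,r_n)|^s \;\leq\; C \Bigl(\sum_{q\geq 2}\varphi(q)\,4^{-qs}\Bigr)^n.
\]
Letting $s^*$ denote the solution of $\sum_{q\geq 2}\varphi(q)4^{-qs}=1$, for $s\geq s^*$ the right-hand side is uniformly bounded in $n$, so $\mathcal H^s(\Omega_s) < \infty$ and $\dim_H\Omega_s \leq s^*$.

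For the lower bound, I will exploit the self-similar structure $\Omega_s = \bigsqcup_r F_r(\Omega_s)$, a consequence of $\mathfrak s((r,\bm r')) = F_r(\mathfrak s(\bm r'))$ together with the disjointness of the $F_r(\Omega_s)\subset \wt\De(r)$. This presents $\Omega_s$ as the attractor of an infinite IFS whose $r$-th piece has effective scale $|\wt\De(r)|/\mathrm{diam}(\Omega_s) \asymp 4^{-q}$, and a Moran-type formula then yields the stated dimension equation. Making this rigorous is the main obstacle, since $F_r$ is only a substitutive (H\"older-type) map rather than a similarity; to close the lower bound I would construct a hierarchical Frostman measure $\nu$ assigning mass $\nu(\wt\De(r_1,\dots,r_n))$ proportionally to $|\wt\De(r_1,\dots,r_n)|^{s^*}$ (rather than the naive product measure with marginals $4^{-qs^*}$, which over-weights deep cylinders because $4^{-s^*\sum q_i}\gg 4^{-s^*\prod q_i}$), then verify via ball-counting that $\nu(B(x,\rho))\leq C\rho^{s^*}$ and invoke the mass-distribution principle.

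Finally, for existence and uniqueness of $s^*$: the function $s\mapsto\sum_{q\geq 2}\varphi(q)4^{-qs}$ is continuous and strictly decreasing, tending to $+\infty$ as $s\to 0^+$. At $s=1$, substituting $x=1/4$ into the Lambert series identity $\sum_{q\geq 1}\varphi(q)x^q/(1-x^q)=x/(1-x)^2$ gives $\sum_{q\geq 1}\varphi(q)/(4^q-1) = 4/9$, hence $\sum_{q\geq 2}\varphi(q)4^{-q} < 4/9 - 1/3 = 1/9 < 1$. So a unique $s^*\in(0,1)$ exists, and numerical approximation yields $s^*\approx 0.473223$.
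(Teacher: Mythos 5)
Your reduction to $\Omega_s=\{\pi(\mathfrak s(\bm r))\}$, your Cantor-set argument, your analysis of the equation $\sum_{q\ge2}\varphi(q)4^{-qs}=1$, and your upper bound $\dim_H\Omega\le s^*$ are all fine (the upper bound is in fact sharper than the paper's own treatment). The genuine gap is the lower bound, which you correctly flag as ``the main obstacle'' --- but it cannot be closed, because the quantity $s^*$ is not the Hausdorff dimension of $\Omega$. Push your own covering estimate one step further: $|\wt\De(r_1,\dots,r_n)|\asymp 4^{-Q_n}$ with $Q_n=q_1\cdots q_n\ge 2^n$, and the number of tuples $(r_1,\dots,r_n)$ with $Q_n=Q$ is at most $\prod_i\varphi(q_i)\le Q$ times the number of ordered factorizations of $Q$ into factors $\ge2$, which is $O(Q^{1.73})$. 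Hence for every fixed $s>0$,
\[
\sum_{(r_1,\dots,r_n)}|\wt\De(r_1,\dots,r_n)|^{s}\le C\sum_{Q\ge 2^n}Q^{3}\,4^{-sQ}\longrightarrow 0\quad(n\to\infty),
\]
so $\mathcal H^s(\Omega\cap(0,1/2))=0$ for all $s>0$ and $\dim_H\Omega=0$. Consequently no Frostman measure of any positive exponent exists: your proposed assignment $\nu(\wt\De(r_1,\dots,r_n))\propto|\wt\De(r_1,\dots,r_n)|^{s^*}$ is incompatible with additivity, since a measure supported on $\Omega\cap(0,1/2)$ satisfying $\nu(\wt\De)\le C|\wt\De|^{s^*}$ uniformly would have total mass bounded by the vanishing sums above. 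Your parenthetical observation that $4^{-s^*\sum q_i}\gg 4^{-s^*\prod q_i}$ is precisely the point; the super-multiplicative decay of the cylinders is what kills the dimension, and no reweighting can rescue a positive lower bound.

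For comparison, the paper's proof rests on the simplification in (\ref{eq:4.2}) of $4^{-Q_n}/4^{-Q_{n-1}}$ to $4^{-q_n}$; the correct value is $4^{-Q_{n-1}(q_n-1)}$, and the two agree only in exceptional cases such as $q_1=q_2=2$. With the correct exponent the nested family is not even asymptotically self-similar with ratios $4^{-q}$: as you note, $F_r$ behaves like $x\mapsto x^{q}$ at small scales rather than like a similarity, so the Mauldin--Urba\'nski formula (which requires uniformly conformal, bounded-distortion branches) does not apply and the Moran equation (\ref{eq:hd-omega}) does not compute $\dim_H\Omega$. In short: your upper-bound method is sound and, carried to its natural conclusion, shows the proposition's dimension claim cannot be correct; your lower-bound plan is the step that fails, and it fails for structural rather than technical reasons.
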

\begin{proof}By our construction, $F_{r_1}\dots F_{r_n}(0)\in\wt\De(r_1,\dots,r_n)$, whence
\[
\Omega\cap(0,1/2)=\bigcap_{n=1}^\infty \bigcup_{r_1,\dots,r_n}\wt\De(r_1,\dots,r_n).
\]
Let us have a look at $\wt\De(r_1,\dots, r_n)$ as a subset of $\wt\De(r_1,\dots, r_{n-1})$. We have
\begin{align}\label{eq:4.2}
\frac{|\wt\De(r_1,\dots, r_{n})|}{|\wt\De(r_1,\dots, r_{n-1})|}&=\frac{4^{-Q_{n}}(t_{n}^\infty-s_{n}^\infty)} {4^{-Q_{n-1}}(t_{n-1}^\infty-s_{n-1}^\infty)} \nonumber\\
&=4^{-q_n}\frac{(1-2^{-Q_{n-1}})^2}{1-2^{-Q_n}}\nonumber\\ &\sim 4^{-q_n},
\end{align}
Thus, although $\Omega\cap(0,1/2)$ is not literally self-similar, it is ``asymptotically self-similar'' (in view of (\ref{eq:4.2})). Hence by a version of the famous Hutchinson formula for countable IFS (see, e.g., \cite[Theorem~3.15]{MU}), $\dim_H(\Omega\cap(0,1/2))=s$, where
\begin{equation}\label{eq:4.3}
\sum_{r_n} 4^{-q_ns}=1.
\end{equation}
More precisely, $\Omega\cap(0,1/2)$ can be approximated by self-similar sets from above and below with an arbitrary precision, and then the result in question can be applied to both sequences yielding (\ref{eq:4.3}). This proves (\ref{eq:hd-omega}), since the case of $\Omega\cap(1/2,1)$ is identical.
\end{proof}

One can also endow this IFS with a natural probability measure. Namely, put for $r=p/q\in\mathbb Q\cap(0,1)$ with $1\le p<q$ and $(p,q)=1$,
\[
\mathbb P(r)=\frac1{2^q-1}.
\]
The identity $\sum_{q=2}^\infty \varphi(q)/(2^q-1)=1$ implies that $\mathbb P$ is indeed a probability measure on the rationals between 0 and 1. We denote by the same letter the product measure on $(\mathbb Q\cap(0,1))^{\BN}$ whose each multiplier is $\mathbb P$.

Now the push down measure $\nu$ supported by the attractor $\Omega$ is defined as follows:
\[
\nu(E)=\begin{cases}
\mathbb P\{\bm r : \Phi_{\bm r}(0)\in E\}, & E\subset (0,1/2),\\
\mathbb P\{\bm r : \Phi_{\bm r}(1)\in E\}, & E\subset (1/2,1)
\end{cases}
\]
and $\nu(E):=\nu(E\cap(0,1/2))+\nu(E\cap(1/2,1))$ for a general Borel set $E\subset(0,1)$. Since the Hausdorff dimension of a measure cannot exceed the Hausdorff dimension of its support, we have $\dim_H\nu<0.473223$. In particular, $\nu$ is singular.

It would be interesting to find out more about this IFS, especially about its attractor $\Omega$ and the invariant measure $\nu$. For instance, is it true that $\dim_H\nu=\dim_H\Omega$?

\subsection{An intermediate set} One can ask about the set of $(a,b)$ such that $\mathcal J(a,b)$ is infinite. However, as such this question is trivial, since if $x\in\mathcal J(a,b)\cap(0,1/2)$, then $2^{-n}x\in\mathcal J(a,b)$ for any $n\ge1$ as well, i.e., $\mathcal J(a,b)$ is automatically infinite.

To fix this, consider the map $T_{a,b}:\mathcal J(a,b)\to\mathcal J(a,b)$ defined as the restriction of $T$ to $\mathcal J(a,b)$. If $x\in[0,a]$ for $a<1/2$, then $Tx\in[0,2a]$, and $Tx\in[2b-1,1]$ for $x\in[b,1]$ with $b>1/2$. Hence the interval $[2b-1,2a]$ is the attractor for $T_{a,b}$. Now we define
\[
\wt D_0=\{(a,b) : \mathcal J(a,b)\cap[2b-1,2a]\ \text{is infinite}\}.
\]
Clearly, $D_1\subsetneq \wt D_0\subsetneq D_0$, i.e., $\wt D_0$ is an intermediate set. Similarly to the functions $\phi$ and $\chi$, we define
\begin{equation}\label{eq:psi}
\psi(a)=\sup\{b : \mathcal J(a,b)\cap[2b-1,2a]\ \text{is infinite}\},
\end{equation}
so $\wt D_0=\{(a,b) : b<\psi(a)\}$.

It is not difficult to give an explicit formula for $\psi(a)$ for all $a$, similarly to Proposition~\ref{prop:phi} and Theorem~\ref{thm:chi}. Namely, when $a\in\mathcal S$, then, in view of $\chi(a)\le\psi(a)\le\phi(a)$ and Proposition~\ref{prop:equal}, we have $\psi(a)=a+1/4$.

If $a\in[s^\infty, sts^\infty]$ for some $r$ (again, we denote $s=\omega_r^-, t=\omega_r^+$), then $\psi(a)=\chi(a)=ts^\infty$, since for $a\in[s^\infty,sts^\infty]$ and $b\in[ts^\infty,t^\infty]$, the set $\mathcal J(a,b)$ is a $q$-cycle together with its preimages, so it becomes a finite set when intersected with any interval which does not contain 0 or 1.

If $a\in[sts^\infty, st^\infty]$, we need to look at level~2. Here $\mathcal J(s_2^\infty, t_2^\infty)$ (where $s_2=s_2(r_1,r_2)$ and similarly $t_2$) contains a $q_1q_2$-cycle, and one can show that the orbit of $s_1^N s_2^\infty$ lies outside $(s_2^\infty, t_2^\infty)$ but inside the attractor for any $N\in\BN$ -- see Proposition~\ref{prop:a-chia} below. This implies $\psi(a)=t_2^\infty$ for any $a\in[s_2^\infty, s_2t_2^\infty]$.

Consequently,
\[
\frac3{16} \le \psi(a)-a\le \frac14,
\]
both bounds being sharp. (The lower one is attained at $a$ with the dyadic expansion $0110(01)^\infty$, for which $\psi(a)$ has the expansion $10(01)^\infty$.)

Thus, whilst $\phi$ is determined on level~1 and $\chi$ may require an infinite descent, $\psi$ is determined on level~1 or 2. Note also that one can replace $[2b-1,2a]$ in (\ref{eq:psi}) with $[\de,1-\de]$ for an arbitrary $\de\in(0,2b-1]$ and this will not change any value of $\psi$. We leave the details to the interested reader.

As a result, we obtain the following claim.

\begin{prop}If $b-a<\frac3{16}$, then for any fixed $\de>0$ the set $\mathcal J(a,b)\cap [\de,1-\de]$ is infinite.
\end{prop}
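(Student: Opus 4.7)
The plan is to recognize this proposition as a direct corollary of the bound $\psi(a) \ge a + \tfrac{3}{16}$ established just above, combined with the definition of $\psi$ in (\ref{eq:psi}) and the earlier remark that for $\delta \in (0, 2b-1]$ the interval $[2b-1,2a]$ may be replaced there by $[\delta, 1-\delta]$ without affecting $\psi$.

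First I would reduce to the principal case $a \in (1/4, 1/2)$ and $b \in (1/2, 3/4)$. The hypothesis $b-a < 3/16 < 1/4$ excludes the configurations of Lemma~\ref{lem:simple}(i). In the remaining cases $b \le 1/2$ or $a \ge 1/2$, Lemma~\ref{lem:simple}(ii) supplies, for each large $n$, an uncountable $T$-invariant subshift $\pi(\Sigma_n) \subseteq \mathcal{J}(a,b)$ contained in an interval of the form $[c_n, 1)$ (or by symmetry $(0, 1-c_n]$) with $c_n \to 1/2$; trimming sequences whose leading run of $1$'s is too long leaves an uncountable (hence infinite) subset of $[\delta, 1-\delta]$ for any sufficiently small $\delta > 0$.

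For the main case, the key observation is that $b < a + 3/16 \le \psi(a)$. I would then invoke the definition of $\psi$ as a supremum together with antitonicity ($b' \ge b \Rightarrow \mathcal{J}(a, b') \subseteq \mathcal{J}(a, b)$) to conclude that $\mathcal{J}(a,b) \cap [2b-1, 2a]$ is infinite: pick $b' \in (b, \psi(a))$ with $\mathcal{J}(a,b') \cap [2b'-1, 2a]$ infinite, and observe
\[
\mathcal{J}(a,b) \cap [2b-1, 2a] \;\supseteq\; \mathcal{J}(a, b') \cap [2b'-1, 2a],
\]
since enlarging $b$ to $b'$ both shrinks $\mathcal{J}$ and shrinks the interval. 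Finally I would apply the cited remark to transfer the infinitude from $[2b-1, 2a]$ to $[\delta, 1-\delta]$ for $\delta \in (0, 2b-1]$.

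I expect the main obstacle to be the justification of that quoted remark, which the text leaves ``to the interested reader''. The underlying dynamical fact is that any $x \in \mathcal{J}(a,b)$ in the transient region $[\delta, 2b-1) \cup (2a, 1-\delta]$ lands in the attractor $[2b-1, 2a]$ after at most one iterate of $T$, so such points are only countably many preimages of attractor points and cannot render the intersection infinite unless $\mathcal{J}(a,b) \cap [2b-1, 2a]$ already is. The hypothesis ``for any fixed $\delta > 0$'' is implicitly understood within the effective range $\delta \le \min(2b-1, 1-2a)$, since for $\delta$ close to $1/2$ the interval $[\delta, 1-\delta]$ is contained in the hole $(a,b)$ itself and the intersection is at most $\{a,b\}$.
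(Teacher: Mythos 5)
Your proposal is correct and follows essentially the same route as the paper, which gives no separate proof but derives the proposition directly from the bound $\psi(a)\ge a+\frac{3}{16}$ together with the remark on replacing $[2b-1,2a]$ by $[\de,1-\de]$ in (\ref{eq:psi}); your handling of the cases outside $(1/4,1/2)\times(1/2,3/4)$ and your caveat about large $\de$ are sensible additions. The one minor imprecision is the claim that transient points reach the attractor ``after at most one iterate'' (points near $0$ or $1$ may need several doublings), but for the direction actually needed here the inclusion $[2b-1,2a]\subseteq[\de,1-\de]$ valid for $\de\le\min(2b-1,1-2a)$ already suffices, so nothing is lost.
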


\subsection{Many routes to chaos} Consider first the family of symmetric holes $\{(a,1-a) : a\in(1/4,1/2)\}$. Here, as we increase $a$ from $1/4$ to $1/2$, we obtain cycles for the map $T_{a,1-a}$ in the standard Sharkovski\u{\i} order -- see \cite{ACS}. For instance, the 2-cycle appears at $a=1/3$, then we obtain a 4-cycle at $a=2/5$, etc., until we hit $a_*$ which, as we know, has the property that $\dim_H \mathcal J(a,1-a)>0$ for any $a\in(a_*,1/2)$. This is usually referred to as a {\em route to chaos via period doubling}.

Asymmetric holes provide us with a continuum of other routes to chaos. Namely, fix a parameter $\bm r=(r_1,r_2,\dots)\in(\mathbb Q\cap(0,1))^\BN$ with $r_i=p_i/q_i$ and put, as above, $Q_n=q_1\dots q_n$ and $s_n=s_n(r_1,\dots,r_n),\ t_n=t_n(r_1,\dots, r_n)$ for $n\ge1$.

Each such $\bm r$ yields its own route to chaos which formalizes the observations of \cite{GPTT} for Lorenz-like maps. They identify three different routes
to chaos. First the standard Thue-Morse type for which maps on the boundary have a countably infinite set of periodic orbits with periods related by consecutive
products (with no restriction on the possible products) realized. Second, there is the irrational rotation route to chaos (see also \cite{MT}) at which
a map on the boundary of chaos has a finite set of periodic orbits and an orbit which, for an induced map is an irrational rotation. Both these possibilities
occur at points, i.e. they are of codimension two. The third possibility is the generic case, occurring on the horizontal lines and vertical jumps of
$\chi (a)$. Here the map has a finite set of periodic orbits similar to the intermittent route of Mackay and Tresser \cite{McTr} in circle maps. From the
point of view of routes to chaos the parametrization in terms of the hole $(a,b)$ is a little unfortunate, in that the continuous curve has discontinuities
in this coordinate representation, i.e. the jumps in $\chi(a)$. It is much more natural to work in coordinates $(b-a,b+a)$ in which case
the boundary is a continuous graph as shown in Figure~\ref{fig:f3}.

\begin{figure}[h!]\begin{center}\includegraphics[width=0.8\textwidth]{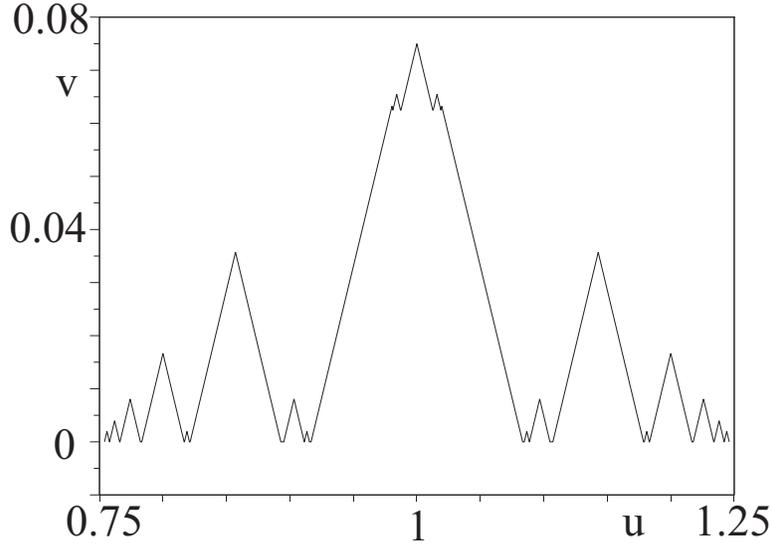}
\end{center}\vspace{-0.3cm}\caption{\sf
$(u,v)$-plane with $u=b+a$, $v=\frac{1}{4}+b-a$, showing the numerically computed approximation to  $\partial D_1$ using
$p/q$ with $q\le 7$ and the $p/q$-renormalizations with $q=2,3,4$ in the $1/2$ box.}\label{fig:f3}\end{figure}

In terms of our notation these results translate to the following statements.

\begin{prop}\label{prop:route-chaos}
If $a\in (s_n^\infty, s_{n+1}^\infty)$ and $b\in(t_{n+1}^\infty, t_n^\infty)$, then $T_{a,b}$ has a $k$-cycle if and only if $k\in\{Q_1,\dots, Q_{n+1}\}$.
\end{prop}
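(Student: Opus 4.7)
The plan is to combine extremality of the pairs $(s_j,t_j)$ with a renormalization argument, proving existence of the listed cycles and non-existence of all others by induction on $n$.

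For existence, by Proposition~\ref{prop:ext} and induction on $j$, each pair $(s_j,t_j)$ is extremal for $j=1,\dots,n+1$, so the orbit $\mathcal{O}_j:=\{\sigma^k s_j^\infty:0\le k<Q_j\}$ is a $Q_j$-cycle for $\sigma$ whose lexicographically extreme elements are $s_j^\infty$ and $t_j^\infty$, with every other orbit point lying outside $(s_j^\infty,t_j^\infty)$. For $j\le n$ the assumptions $s_j^\infty\le s_n^\infty<a$ and $t_j^\infty\ge t_n^\infty>b$ give $(a,b)\subsetneq(s_j^\infty,t_j^\infty)$, hence $\mathcal{O}_j\subset\mathcal{J}(a,b)$ and $\mathcal{O}_j$ realises a $Q_j$-cycle of $T_{a,b}$. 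For the outermost case $j=n+1$ I would exploit the block identities $s_{n+1}=s_nt_nW$ and $t_{n+1}=t_ns_nW$ derived prior to \eqref{eq:sub}: every cyclic permutation of $s_{n+1}$ different from $s_{n+1}$ and $t_{n+1}$ themselves begins with the block $s_n$ or $t_n$, and extremality of $(s_n,t_n)$ then forces the corresponding iterate into $[s_n^\infty,a]\cup[b,t_n^\infty]$, outside the hole.

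For non-existence, suppose $T_{a,b}$ admits a primitive $k$-cycle represented by a periodic sequence $u^\infty$ with $|u|=k$. I would first show inductively on $j\le n$ that the cyclic class of $u$ must avoid the strictly larger interval $(s_j^\infty,t_j^\infty)$ as well: if $\sigma^iu^\infty\in(s_j^\infty,t_j^\infty)$ for some $i$, then the first-return map $T^{Q_{j-1}}$ applied to a neighbourhood of $(s_j^\infty,t_j^\infty)$, combined with extremality and Lemma~\ref{lem:countable}, contradicts the strict periodicity of $u^\infty$. Once this is established, $u^\infty\in\mathcal{J}(s_{n+1}^\infty,t_{n+1}^\infty)$, which by Lemma~\ref{lem:countable} is countable and whose periodic elements are precisely the cyclic permutations of some $s_j$ (equivalently of $t_j$) with $1\le j\le n+1$, forcing $k=Q_j$ for some such $j$.

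The principal obstacle is the existence claim at level $n+1$: the extreme elements $s_{n+1}^\infty,t_{n+1}^\infty$ of the candidate cycle $\mathcal{O}_{n+1}$ lie strictly inside $(a,b)$ under the open hypothesis on the parameters, so a careful accounting of the inner block decomposition is needed to verify that the remaining $Q_{n+1}-2$ orbit points avoid the hole, and $s_{n+1}^\infty,t_{n+1}^\infty$ must be reinterpreted at the boundary. A cleaner route would be to pass to the renormalisation: $T^{Q_n}$ restricted to $[s_n^\infty,s_{n+1}^\infty]\cup[t_{n+1}^\infty,t_n^\infty]$ should be conjugate, via the lexicographic rescaling built into the substitutions $\rho_{r_{n+1}}$, to a doubling map with hole $((\omega_{r_{n+1}}^-)^\infty,(\omega_{r_{n+1}}^+)^\infty)$; the level-$(n+1)$ cycle of the original system then corresponds to the level-one cycle of the renormalised system, and the identification with an orbit of $T$ completes the argument.
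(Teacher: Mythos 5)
Your overall strategy---sandwiching the periodic points of $T_{a,b}$ between those of the holes $(s_n^\infty,t_n^\infty)$ and $(s_{n+1}^\infty,t_{n+1}^\infty)$ and invoking Lemma~\ref{lem:countable} to control what can appear at each level---is the same idea the paper uses, though the paper compresses it into two sentences: the proof of Lemma~\ref{lem:countable} shows that $\mathcal J(s_{n+1}^\infty,t_{n+1}^\infty)\setminus\mathcal J(s_{n}^\infty,t_{n}^\infty)$ consists of preimages of $s_{n+1}^\infty$ and $t_{n+1}^\infty$, so the only periodic points gained in passing from level $n$ to level $n+1$ form the single $Q_{n+1}$-cycle, and induction does the rest. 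Within your write-up, however, the auxiliary claim in the non-existence half---that the cyclic class of $u$ must avoid each larger interval $(s_j^\infty,t_j^\infty)$, $j\le n$---is false: the cycle through $s_{j+1}^\infty$ is itself a periodic orbit of $T_{a,b}$ that enters $(s_j^\infty,t_j^\infty)$, since $s_j^\infty\prec s_{j+1}^\infty\prec t_j^\infty$. The only consequence you draw from it, namely $u^\infty\in\mathcal J(s_{n+1}^\infty,t_{n+1}^\infty)$, already follows from the inclusion $(a,b)\supset(s_{n+1}^\infty,t_{n+1}^\infty)$, so that step should be deleted rather than repaired.

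The more serious point is that the ``principal obstacle'' you flag in the existence half is not a technicality: under the stated open hypotheses the $Q_{n+1}$-cycle genuinely fails to exist. Since $a<s_{n+1}^\infty<1/2<t_{n+1}^\infty<b$, the two extreme points of the candidate cycle lie inside the hole, so that cycle is destroyed; and your own sandwich argument shows that no other orbit of period $Q_{n+1}$ can survive, because it would already have to lie in $\mathcal J(s_{n+1}^\infty,t_{n+1}^\infty)$. Concretely, for $\bm r=(1/2,1/2,\dots)$ and $n=1$, every primitive $4$-cycle of $T$ meets $[2/5,3/5]\subset(a,b)$ whenever $a<2/5$ and $b>3/5$. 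No amount of ``careful accounting'' of the remaining $Q_{n+1}-2$ orbit points, and no renormalization, can manufacture the missing cycle. What your (corrected) argument actually establishes is that in the open rectangle the realized periods are exactly $Q_1,\dots,Q_n$, with $Q_{n+1}$ appearing only once $a\ge s_{n+1}^\infty$ and $b\le t_{n+1}^\infty$: you have in effect uncovered a boundary/indexing discrepancy in the statement itself, which the paper's one-line proof---really an assertion about the hole $(s_{n+1}^\infty,t_{n+1}^\infty)$ rather than about an arbitrary $(a,b)$ in the open rectangle---passes over in silence. The concluding renormalization sketch is a plausible alternative viewpoint but is not developed enough to count as a proof.
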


\begin{proof}It follows from the proof of Lemma~\ref{lem:countable} that the set $X_n:=\mathcal J(s_{n+1}^\infty,t_{n+1}^\infty)\setminus \mathcal J(s_{n}^\infty,t_{n}^\infty)$ is contained in the set of preimages of $s_{n+1}^\infty$ and $t_{n+1}^\infty$. Therefore, the only purely periodic points in $X_n$ are $s_{n+1}^\infty$ and $t_{n+1}^\infty$ themselves, both of period $Q_{n+1}$.
\end{proof}



Thus, each rectangle $(s_n^\infty,s_{n+1}^\infty)\times(t_{n+1}^\infty, t_n^\infty)$ in the $(a,b)$-plane is frequency locked and can be perceived as an `Arnold tongue' (see \cite{McTr} and \cite{GPTT}).

If we treat $a$ and $b$ as two material points moving towards each other in such a way that whenever $a=s_n^\infty$, necessarily $b=t_n^\infty$ (and arbitrary speeds between these milestones), then we obtain the cycles for $T_{a,b}$ in the following order: $q_1, q_1q_2, q_1q_2q_3$, etc. -- until $a=\mathfrak s(\bm r)$ and $b=\mathfrak t(\bm r)$. From this point on, the map $T_{a,b}$ becomes chaotic, in the spirit of \cite{GPTT}.

It would be interesting to construct meaningful analogues of the classical Sharkovski\u{\i} order for each $\bm r$. (The classical one corresponds to $\bm r=(1/2,1/2,\dots)$.) One possible way to do it could be to preserve the speed ratio for $a$ and $b$ at $\mathfrak s(\bm r)$ and $\mathfrak t(\bm r)$ respectively and continue at the same ratio inside the chaotic region.

We finish the section with a detailed study of the case $b=\chi(a)$.

\begin{prop}\label{prop:a-chia}
The set $\mathcal J(a,\chi(a))$ is uncountable of zero Hausdorff dimension if and only if $a\in\mathcal S$ or $a=\mathfrak s(\bm r)$ for some $\bm r\in(\mathbb Q\cap(0,1))^\BN$.
\end{prop}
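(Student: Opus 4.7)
The plan is to split the argument by the four cases for $a$ in Theorem~\ref{thm:chi}. The zero Hausdorff dimension half of the claim comes for free: the Labarca--Moreira theorem quoted in Section~\ref{sec-main} says $\dim_H\mathcal J(a,b)>0$ precisely when $b<\chi(a)$, so at the threshold $b=\chi(a)$ the dimension is automatically $0$. Hence the substantive issue is the countable/uncountable dichotomy, and I need to show that case~(i) of Theorem~\ref{thm:chi} yields a countable $\mathcal J(a,\chi(a))$ while cases~(ii)--(iv) yield uncountable ones. Case~(i) is already handled inside the proof of Theorem~\ref{thm:chi}(i): the first--return analysis under $\sigma^{Q_n}$ given there shows $\mathcal J(s_nt_ns_n^\infty,\,t_ns_n^\infty)\setminus\mathcal J(s_n^\infty,t_n^\infty)$ is at most countable, and $\mathcal J(s_n^\infty,t_n^\infty)$ itself is countable by Lemma~\ref{lem:countable}, so monotonicity of $\mathcal J$ in $a$ extends countability to every $a$ in the plateau $[s_n^\infty,s_nt_ns_n^\infty]$.

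For case~(ii), $a\in\mathcal S$, I would use that such $a$ is of the form $a=\pi(01w)$ for the characteristic Sturmian word $w$ of some irrational slope $\gamma\in(0,1/2)$, so that $\chi(a)=a+1/4=\pi(10w)$. The pair $(01w,10w)$ is then a Sturmian kneading invariant, and the balance property of the Sturmian subshift $X_\gamma$ forces each of its elements to lie lexicographically outside the open interval $(01w,10w)$ along every shift. Consequently $\pi(X_\gamma)\subseteq\mathcal J(a,\chi(a))$, giving the required uncountability since $X_\gamma$ is uncountable. For case~(iii), $a\in\mathsf S_n$ with $n\ge 2$, I would propagate by renormalisation: writing $a=F(a')$ with $F=F_{r_1}\circ\cdots\circ F_{r_{n-1}}$ and $a'\in\mathcal S$, I check that $F(\mathcal J(a',a'+1/4))\subseteq\mathcal J(a,\chi(a))$ and invoke injectivity of $F$ to transport uncountability up one level at a time.

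For case~(iv), $a=\mathfrak s(\bm r)$ and $\chi(a)=\mathfrak t(\bm r)$, I would build the $S$--adic subshift $Y_{\bm r}$ as the $\sigma$--orbit closure of $\mathfrak s(\bm r)$. Iterating Proposition~\ref{prop:ext} shows each finite pair $(s_n,t_n)$ is extremal, and passing to the limit places every shift of $\mathfrak s(\bm r)$ lexicographically between $\mathfrak s(\bm r)$ and $\mathfrak t(\bm r)$, so $\pi(Y_{\bm r})\subseteq\mathcal J(\mathfrak s(\bm r),\mathfrak t(\bm r))$; non--triviality of every $\rho_{r_i}$ keeps $Y_{\bm r}$ uncountable. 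The hardest step of the whole argument, I anticipate, is the renormalisation in case~(iii): the return iterates $T^{kQ_{n-1}}F(x')=F(T^k x')$ transparently respect the $F$--pullback of the hole, but the intermediate iterates $T^j x$ with $0<j<Q_{n-1}$ also have to miss $(a,\chi(a))$. I would handle this geometrically, using that $(a,\chi(a))$ sits strictly inside the horseshoe gap $(s_{n-1}t_{n-1}s_{n-1}^\infty,\,t_{n-1}s_{n-1}^\infty)$ while intermediate shifts of sequences in the image of $F$ produce $\pi$--values lying in a disjoint union of subintervals dictated by the cyclic structure of the building blocks $s_k,t_k$, none of which meet the hole.
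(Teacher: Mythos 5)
Your treatment of cases (i), (ii) and (iv) of Theorem~\ref{thm:chi} tracks the paper's proof closely: the dimension claim is read off from \cite{LM}, case (i) from Lemma~\ref{lem:countable} together with the return-map analysis, and case (ii) from embedding the Sturmian system $X_\gamma$. In case (iv) the paper argues differently -- it exhibits the explicit uncountable family $s_1^{i_1}s_2^{i_2}\cdots$ with $i_m\ge1$ and checks via Lemma~\ref{lem:one} that all its shifts avoid the hole -- whereas you take the orbit closure $Y_{\bm r}$ of $\mathfrak s(\bm r)$; that route is viable provided you justify that $Y_{\bm r}$ is an infinite minimal (hence uncountable) subshift, and you should correct the slip where you say every shift of $\mathfrak s(\bm r)$ lies \emph{between} $\mathfrak s(\bm r)$ and $\mathfrak t(\bm r)$: for membership in $\mathcal J(\mathfrak s(\bm r),\mathfrak t(\bm r))$ the shifts must lie \emph{outside} that open interval.

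The genuine problem is case (iii). A point $a\in\mathcal S_n(r_1,\dots,r_{n-1})$ with $n\ge2$ is neither in $\mathcal S$ nor of the form $\mathfrak s(\bm r)$, so the ``only if'' half of the proposition asserts that for such $a$ the set $\mathcal J(a,\chi(a))$ is \emph{not} uncountable of zero Hausdorff dimension; since the dimension is automatically zero by \cite{LM}, the conclusion you must reach is that $\mathcal J(a,\chi(a))$ is \emph{countable}. Your renormalisation argument is built to prove uncountability, i.e.\ the exact opposite of what the statement requires, and the paper disposes of this case by deducing countability from the plateau case $a\in[s_n^\infty,s_nt_ns_n^\infty]$ via monotonicity. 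So as written your case (iii) cannot be part of a proof of the proposition. I would add a caution: your inclusion $F(\mathcal J(a',a'+1/4))\subseteq\mathcal J(a,\chi(a))$ is not obviously false -- the return iterates $\sigma^{kQ_{n-1}}$ respect the pullback of the hole because the substitution is order-preserving, and the intermediate iterates avoid $(s_{n-1}^\infty,t_{n-1}^\infty)\supset(a,\chi(a))$ by Lemma~\ref{lem:ILcap} -- so before going further you must either locate the error in that construction or reconcile it with the statement and with the paper's monotonicity argument. Either way, in its present form case (iii) is a proof of the wrong assertion, which is a fatal gap.
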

\begin{proof}
The fact that $\dim_H\mathcal J(a,\chi(a))=0$ follows immediately from \cite[Theorem~2]{LM}.

Assume first $a\in\mathcal S$ and put $X_\ga=\ov{\{T^na : n\ge0\}}$. As was shown in \cite[Section~2]{SSC}, the (symbolic) set $X_\ga$ is the {\em Sturmian system} given by $\ga$, i.e., the set of all Sturmian sequences with the 1-ratio $\ga$. Furthermore, the dyadic expansion of $a$ is the largest element of $X_\ga$ which begins with 0, while $\chi(a)=a+1/4$ is the smallest element which begins with 1. Hence $X_\ga\cap(a,\chi(a))=\varnothing$, i.e., $\mathcal J(a,\chi(a))\supset X_\ga$. Now the claim of the theorem follows from the well known fact that $X_\ga$ has the cardinality of the continuum -- see, e.g., \cite[Chapter~2]{Loth}.

If $a\in [s_n^\infty, s_nt_ns_n^\infty]$ for some $(r_1,\dots,r_n)$, then $\mathcal J(a,\chi(a))$ is countable by Lemma~\ref{lem:countable}. By monotonicity, the same is true for any $a\in\mathcal S_n(r_1,\dots, r_{n-1})$.

Finally, assume $a=\mathfrak s(\bm r)$. Then $\chi(a)=\mathfrak t(\bm r)$, and we claim that $\mathcal J(\mathfrak s(\bm r), \mathfrak t(\bm r))$ is uncountable. More precisely,
\begin{equation}\label{eq:limitset}
s_1^{i_1}s_2^{i_2}\ldots \in \mathcal J(\mathfrak s(\bm r), \mathfrak t(\bm r)),\quad i_m\ge1,\ m\ge1.
\end{equation}
To prove this, note that by Lemma~\ref{lem:one} below,
\begin{equation}\label{eq:ineq3}
\sigma^k(s_n)s_n^\infty\prec s_n^\infty, \quad \sigma^k(t_n)s_n^\infty\prec s_n^\infty
\end{equation}
for any $n\ge1$ and any $k\in\{1,\dots, Q_n-1\}$, provided the left-hand side of (\ref{eq:ineq3}) begins with 0. We need to show that
\[
\sigma^k(s_1^{i_1}s_2^{i_2}\ldots)\prec \mathfrak s(\bm r)
\]
if the left-hand side begins with 0 and $\succ \mathfrak t(\bm r)$ otherwise for any $k\ge0$. Both cases are similar, so we assume the left-hand side to begin with 0.

If $k\le (i_1-1)q_1$, then by (\ref{eq:ineq3}), it is less than $s_1^\infty\prec \mathfrak s(\bm r)$. If $(i_1-1)q_1<k<i_1q_1$, then by the same inequality and in view of the fact that $s_2$ consists of blocks $s_1$ and $t_1$, the left-hand side is less than $s_2^\infty\prec \mathfrak s(\bm r)$. Applying the same argument for larger values of $k$ yields (\ref{eq:limitset}), whence $\mathcal J(\mathfrak s(\bm r), \mathfrak t(\bm r))$ has the cardinality of the continuum.
\end{proof}

As an immediate corollary we obtain

\begin{thm}\label{thm:zerodim}
The set $\mathcal J(a,b)$ is uncountable of zero Hausdorff dimension if and only if $a\in\mathcal S$ and $b=a+1/4$ or $a=\mathfrak s(\bm r)$ and $b=\mathfrak t(\bm r)$ for some $\bm r\in(\mathbb Q\cap(0,1))^\BN$.
\end{thm}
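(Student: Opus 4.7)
My plan is to derive Theorem~\ref{thm:zerodim} directly from Proposition~\ref{prop:a-chia} together with the already established description of $\chi$. The first step is to reduce to the interesting rectangle: by Lemma~\ref{lem:simple}, whenever $(a,b)$ lies outside $(1/4,1/2)\times(1/2,3/4)$, the set $\mathcal J(a,b)$ is either equal to $\{0,1\}$ (hence finite) or has strictly positive Hausdorff dimension. In particular such pairs cannot be uncountable of zero Hausdorff dimension, so throughout the argument we may assume $(a,b)\in(1/4,1/2)\times(1/2,3/4)$.

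The next step is to force $b=\chi(a)$ by a simple squeeze argument using monotonicity in $b$. If $b<\chi(a)$, then the theorem from \cite{LM} quoted just before Section~2.3 (equivalently, Theorem~\ref{thm:main}) yields $\dim_H\mathcal J(a,b)>0$, ruling out zero-dimensional uncountability. If, on the other hand, $b>\chi(a)$, then by the very definition of $\chi(a)$ as the supremum of those $b$ for which $\mathcal J(a,b)$ is uncountable, $\mathcal J(a,b)$ is countable, and again the conclusion fails. Hence the only candidates are pairs with $b=\chi(a)$.

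Finally, I would invoke Proposition~\ref{prop:a-chia} verbatim: $\mathcal J(a,\chi(a))$ is uncountable of zero Hausdorff dimension precisely when $a\in\mathcal S$ or $a=\mathfrak s(\bm r)$ for some $\bm r\in(\mathbb Q\cap(0,1))^\BN$. To match this with the two families in the theorem statement, it remains to evaluate $\chi$ on each class: Theorem~\ref{thm:chi}(ii) gives $\chi(a)=a+\tfrac14$ for $a\in\mathcal S$, while Theorem~\ref{thm:chi}(iv) gives $\chi(\mathfrak s(\bm r))=\mathfrak t(\bm r)$. Substituting yields exactly the two listed cases, and conversely any $(a,b)$ of the listed form is automatically of the form $(a,\chi(a))$ with $a$ of the required type, so Proposition~\ref{prop:a-chia} supplies both the uncountability and (via \cite[Theorem~2]{LM}) the vanishing of dimension. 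The argument has essentially no obstacle: the entire substance is already contained in Proposition~\ref{prop:a-chia} — in particular the construction (\ref{eq:limitset}) that produced continuum-many survivors — and the present theorem merely bundles it with the two evaluations of $\chi$.
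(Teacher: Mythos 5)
Your proof is correct and follows essentially the same route as the paper, which presents Theorem~\ref{thm:zerodim} as an immediate corollary of Proposition~\ref{prop:a-chia}; your write-up simply makes explicit the reduction to $b=\chi(a)$ (via Lemma~\ref{lem:simple}, the Labarca--Moreira dichotomy, and the definition of $\chi$ as a supremum) and the evaluation of $\chi$ on $\mathcal S$ and on the points $\mathfrak s(\bm r)$ via Theorem~\ref{thm:chi}.
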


\section{Appendix}

\subsection{Proof of Proposition~\ref{prop:ext}}
We start with a preparatory lemma which generalizes the reflection property proved in our earlier paper \cite[Lemma~7]{GSid}:

\begin{lemma}\label{lem:one}
Suppose that $(s,t)$ is an extremal pair with $|s|=N$. If
\begin{equation}\label{eq:shiftj}
\sigma^js^\infty \prec s^\infty\quad {\rm for ~some}\quad  j\in\{1, \dots , N-1\}
\end{equation}
and
\begin{equation}\label{eq:equal}s_{j+1}\dots s_N=s_1\dots s_{N-j}\end{equation}
then $s_{N-j+1}=1$. An analogous statement holds for shifts of $t$.
\end{lemma}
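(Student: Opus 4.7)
The plan is to argue by contradiction: assume $s_{N-j+1}=0$ and extract a violation of extremality. The periodicity hypothesis \eqref{eq:equal}, $s_{j+1}\dots s_N=s_1\dots s_{N-j}$, says precisely that $s^\infty$ and $\sigma^j s^\infty$ agree on positions $1,\dots,N-j$. Consequently the strict inequality $\sigma^j s^\infty \prec s^\infty$ from \eqref{eq:shiftj} is decided only by the tails beginning at position $N-j+1$. Shifting both sides by $N-j$ and using $\sigma^{j+(N-j)}s^\infty=\sigma^N s^\infty=s^\infty$, this translates into
\[
s^\infty \prec \sigma^{N-j}s^\infty.
\]

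Next I would invoke extremality at the index $k=N-j\in\{1,\dots,N-1\}$. Negating the forbidden chain \eqref{eq:extremal} for $s$, one of the alternatives $\sigma^{N-j}s^\infty \prec s^\infty$ or $\sigma^{N-j}s^\infty \succeq t^\infty$ must hold; the displayed inequality just obtained rules out the first, forcing $\sigma^{N-j}s^\infty \succeq t^\infty$. A first-symbol comparison then closes the argument: $\sigma^{N-j}s^\infty$ begins with $s_{N-j+1}$, while $t^\infty$ begins with $t_1=1$, so if $s_{N-j+1}$ were $0$ the left-hand side would start with $0<1$, a contradiction. Hence $s_{N-j+1}=1$, as claimed.

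The only subtle point in this scheme is the ``cancellation of a common prefix'' used to pass from $\sigma^j s^\infty \prec s^\infty$ to $s^\infty \prec \sigma^{N-j}s^\infty$; this is just the definition of lexicographic order but must be stated cleanly to avoid off-by-one slips. The analogous assertion for shifts of $t$ follows from the mirror argument: assume the reverse inequality $\sigma^j t^\infty \succ t^\infty$ and the corresponding prefix/suffix equality for $t$, derive $\sigma^{|t|-j}t^\infty \prec t^\infty$ by the same cancellation, apply the second line of \eqref{eq:extremal} to obtain $\sigma^{|t|-j}t^\infty \preceq s^\infty$, and conclude $t_{|t|-j+1}=0$ by comparing first symbols against $s_1=0$.
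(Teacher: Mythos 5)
Your proof is correct and follows essentially the same route as the paper's: use the common prefix of length $N-j$ guaranteed by \eqref{eq:equal} to shift \eqref{eq:shiftj} by $N-j$ and obtain $s^\infty\prec\sigma^{N-j}s^\infty$, then invoke the negation of the chain in \eqref{eq:extremal} at $k=N-j$ together with a first-symbol comparison against $t^\infty$ to force $s_{N-j+1}=1$. The only (immaterial) difference is that the paper assumes $s_{N-j+1}=0$ and exhibits the forbidden chain directly, while you derive $\sigma^{N-j}s^\infty\succeq t^\infty$ first and read off the first symbol; your sketch of the mirror statement for $t$ is also the intended one.
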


\begin{proof}
Suppose the assumptions (\ref{eq:shiftj}) and (\ref{eq:equal}) of the lemma hold but that $s_{N-j+1}=0$. Then since the first $N-j$ symbols of
 $\sigma^js^\infty$ and $s^\infty$ are equal, applying $\sigma^{N-j}$ on both sides of (\ref{eq:shiftj}) does not change the inequality and so
 \begin{equation}
 \sigma^{N-j}(\sigma^js^\infty)=s^\infty\prec \sigma^{N-j}s^\infty=s_{N-j+1}\dots .
 \end{equation}
But if  $s_{N-j+1}=0$ then this implies that $s^\infty\prec \sigma^{N-j}s^\infty\prec t^\infty$ for any $t$ starting with one,
contradicting the assumption that $(s,t)$ is extremal, cf. (\ref{eq:extremal}).
\end{proof}

Let
\begin{equation}\label{eq:ILIR}
I_L=[s^\infty,st^\infty],\quad I_R=[ts^\infty,t^\infty]
\end{equation}
and note that any word constructed by concatenating $s$ and $t$ is in $I_L\cup I_R$, in particular $S(s,t)^\infty\in I_L$ and
$T(s,t)^\infty\in I_R$.

\begin{lemma}\label{lem:ILcap}
We have
\begin{equation}\label{eq:claim}
\sigma^kI_L\cap (s^\infty ,t^\infty)=\varnothing, \quad k=1, \dots |s|-1,
\end{equation}
with a similar equation holding for $I_R$.
\end{lemma}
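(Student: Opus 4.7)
The plan is to exploit the structural constraint that any $w \in I_L = [s^\infty, st^\infty]$ begins with the word $s$. Setting $N = |s|$ and $u := \sigma^N w$, one has $u \in [s^\infty, t^\infty]$, and for $1 \le k \le N-1$,
\[
\sigma^k w = s_{k+1}\dots s_N\, u, \qquad \sigma^k s^\infty = s_{k+1}\dots s_N\, s^\infty,
\]
so the two sequences agree on an initial block of length $N-k$. Extremality of $(s,t)$ tells us that the inequality $s^\infty \preceq \sigma^k s^\infty \prec t^\infty$ fails, so either $\sigma^k s^\infty \prec s^\infty$ or $\sigma^k s^\infty \succeq t^\infty$. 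I would argue that in the first case $\sigma^k w \preceq s^\infty$, and in the second $\sigma^k w \succeq t^\infty$, which yields the claim.

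Each of these two cases splits further according to where $\sigma^k s^\infty$ first disagrees with $s^\infty$ (resp.\ $t^\infty$). The easy sub-case is when the disagreement between $s_{k+1}\dots s_N$ and $s_1\dots s_{N-k}$ (resp.\ $t_1\dots t_{N-k}$) occurs within the initial $N-k$ positions: the strict inequality is then inherited verbatim by $\sigma^k w$ (since its common prefix with $\sigma^k s^\infty$ has length $N-k$), giving $\sigma^k w \prec s^\infty$ (resp.\ $\sigma^k w \succ t^\infty$). The subtler sub-case is when $s_{k+1}\dots s_N$ coincides with $s_1\dots s_{N-k}$ (resp.\ $t_1\dots t_{N-k}$). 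Here a reflection argument of the type underlying Lemma~\ref{lem:one} applies: cancelling the common prefix converts $\sigma^k s^\infty \prec s^\infty$ into $s^\infty \prec \sigma^{N-k} s^\infty$, and invoking extremality now at the mirrored index $N-k$ forces $\sigma^{N-k} s^\infty \succeq t^\infty$. Since $u \preceq t^\infty \preceq \sigma^{N-k} s^\infty$, we get
\[
\sigma^k w = s_1\dots s_{N-k}\, u \preceq s_1\dots s_{N-k}\, \sigma^{N-k} s^\infty = s^\infty.
\]
The symmetric manipulation, turning $\sigma^k s^\infty \succeq t^\infty$ into $s^\infty \succeq \sigma^{N-k} t^\infty$ and then combining with $u \succeq s^\infty$, handles the remaining sub-case and gives $\sigma^k w \succeq t^\infty$.

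The main obstacle is really the book-keeping in this reflection step: one must keep track of the direction of inequalities after the common prefix is folded back and extremality is re-applied at index $N-k$, where a sign slip is easy to make. Once the mirroring is carried out carefully, all four sub-cases collapse to the required conclusion $\sigma^k w \notin (s^\infty, t^\infty)$, and the analogous statement for $I_R = [ts^\infty, t^\infty]$ follows by interchanging the roles of $s$ and $t$ (and the comparisons ``$\prec$'' and ``$\succ$'') throughout.
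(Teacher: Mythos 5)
Your argument is correct and follows essentially the same route as the paper's: the extremality dichotomy applied to $\sigma^k s^\infty$, the split according to whether the first disagreement occurs within the common prefix of length $N-k$, and the reflection step underlying Lemma~\ref{lem:one} (you extract the slightly sharper conclusion $\sigma^{N-k}s^\infty\succeq t^\infty$ rather than just $s_{N-k+1}=1$, which is what lets you conclude directly). The only real difference is presentational: you argue pointwise for an arbitrary $w\in I_L$, whereas the paper argues by contradiction via the endpoints of $\sigma^k I_L$.
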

\begin{proof}To prove (\ref{eq:claim}) note that the first symbols of both endpoints of $\sigma^kI_L$ are equal if $k\in\{1, \dots ,|s|-1|\}$.

\medskip
Suppose that the first symbol is one and $\sigma^kI_L$ does not satisfy the claim for this $k$. Then the left endpoint of $\sigma^kI_L$
is less than $t^\infty$, i.e. $\sigma^ks^\infty \prec t^\infty$. But since $\sigma^ks^\infty$ starts with a one this contradicts the assumption
that $(s,t)$ is extremal.

\medskip
Now suppose that the first symbol is zero and $\sigma^kI_L$ does not satisfy the claim for this $k$. Then by a similar argument
\begin{equation}\label{eq:in1}
s^\infty \prec \sigma^kst^\infty.
\end{equation}
Let $N=|s|$. If there is a difference in the first $N-k$ terms of these two sequences (non-empty since $k<N$), then $s^\infty \prec \sigma^ksA$
for any infinite word $A$ and in particular $s^\infty\prec \sigma^ks^\infty$, contradicting the extremality of $(s,t)$. Hence
the first $N-k$ symbols are the same, i.e. $s_1\dots s_{N-k}=s_{k+1}\dots s_N$. But this, together with the fact that $(s,t)$ is
extremal, means that Lemma~\ref{lem:one} applies and so $s_{N-k+1}=1$. Applying $\sigma^{N-k}$ to both sides of (\ref{eq:in1}),
noting that the first $N-k$ terms are equal, implies that
\[
\sigma^{N-k}s^\infty \prec t^\infty\]
and since $s_{N-k+1}=1$, $s^\infty\prec \sigma^{N-k}s^\infty$, and so we obtain a contradiction once again, thus establishing the claim.
\end{proof}

These lemmas make the proof of Proposition~\ref{prop:ext} relatively simple.

\medskip
\emph{Proof of Proposition~\ref{prop:ext}:} By Lemma~\ref{lem:ILcap}, $\sigma^kS(s,t)^\infty$ is either less than or equal to $s^\infty$ or greater than or equal to $t^\infty$ for $k=1,\dots |s|-1$,
and so strictly less than $S(s,t)^\infty$ or strictly greater than $T(s,t)^\infty$. This (together with the equivalent statement
for $I_R$) implies immediately that if an iterate of the shift of either $S(s,t)^\infty$ or $T(s,t)^\infty$ is between
$S(s,t)^\infty$ and $T(s,t)^\infty$, contradicting extremality, then an appropriate shift of $S(0,1)^\infty$ or $T(0,1)^\infty$
would lie between $S(0,1)^\infty$ and $T(0,1)^\infty$ contradicting the assumption that $(S,T)$ is extremal.

To see this in more detail, suppose that $S(s,t)=st^{p_1}s^{p_2}\dots t^{p_n}$ where the last $p_n$ may be zero (meaning no extra
symbols and the end is a power of $s$). Then $S(s,t)\in I_L$ and $\sigma^kS(s,t)\in \sigma^k(I_L)$ for $i=1,\dots |s|-1$ and so
by the lemma are outside the interval $(s^\infty,t^\infty)$ and hence also outside  $(S(s,t)^\infty,T(s,t)^\infty)$ as required for extremality.
Thus the first time we might get a contradiction of extremality is for $\sigma^{|s|}S(s,t)$, where
\[
\sigma^{|s|}S(s,t)^\infty=P_1(s,t)S(s,t)^\infty=t^{p_1}s^{p_2}\dots t^{p_n}S(s,t)^\infty,
\]
where $P_1(0,1)=\sigma S(0,1)$. On the other hand,
\[
S(s,t)^\infty\prec P_1(s,t)S(s,t)^\infty\prec T(s,t)^\infty
\]
if and only if
\[
S(0,1)^\infty\prec P_1(0,1)S(0,1)^\infty=\sigma S(0,1)^\infty\prec T(0,1)^\infty
\]
and such a relation would contradict the assumption that $(S,T)$ are extremal. Further iterates can be treated the same way.

\subsection{Proof of Proposition~\ref{prop-entropy}} Let $v\in W_n$; note first that if $j\le (n_1-1)N$, then (\ref{eq:extremal}) clearly yields the claim.

Assume that $k:=j-(n_1-1)N\in\{1,\dots, \ell-1\}$. Then by (\ref{eq:extremal}),
\[
\sigma^k v = s_{k+1}\dots s_N s_1\dots s_\ell\ldots \prec s^\infty \ \text{or}\ \succ t^\infty,
\]
since $s_{k+1}\dots s_Ns_1\dots s_k\prec s$ if $s_{k+1}=0$ and $\succ t$ otherwise.

Assume now that $k=\ell$. By the definition of $\ell$, we have
\[
u':=\sigma^\ell v = ts^{n_2}s_1\dots s_\ell s^{n_3}\dots
\]
If $n$ is chosen in such a way that $ts^n\prec u$, then $u'\succ u$.

Now assume $\ell<k<\ell+N$; in view of Lemma~\ref{lem:ILcap} and by continuity of the shift, if $z\prec ts^\infty$ is sufficiently close to $ts^\infty$, then $\sigma^j z$ is either less than $s^\infty$ or greater than $t^\infty$. (So, we increase our $n$ if necessary.)

Finally, let $k=\ell+N$. We have $\sigma^kv=s^{n_2}s_1\dots s_\ell s^{n_3}\dots$, i.e. we are back where we started.

\end{document}